\newtheorem{theorem}{Theorem}[section]
\newtheorem{lemma}[theorem]{Lemma}
\newtheorem{corollary}[theorem]{Corollary}
\theoremstyle{definition}
\newenvironment{remark}
  {\pushQED{\qed}\remx}
  {\popQED\endremx}
\newcommand{\la}{\langle}
\newcommand{\ra}{\rangle}
\newcommand{\N}{\mathbb{N}}
\newcommand{\R}{\mathbb{R}}
\newcommand{\C}{\mathbb{C}}
\newcommand{\T}{\mathbb{T}}
\newcommand{\vare}{\varepsilon}
\newcommand{\pt}{\partial}
\newcommand{\bg}{\Big}
\newcommand{\Z}{{\mathbb{Z}}}
\newcommand{\px}{P_{N_x}^x}
\newcommand{\py}{P_{N_y}^y}
\numberwithin{equation}{section}
\begin{document}

\address{Yongming Luo
\newline \indent
Faculty of Computational Mathematics and Cybernetics
\newline \indent Shenzhen MSU-BIT University, China}
\email{luo.yongming@smbu.edu.cn}

\title[Critical scattering for the NLS on waveguide manifold]{Critical scattering for the nonlinear Schr\"odinger equation on waveguide manifolds}
\author{Yongming Luo}

\begin{abstract}
We study the small data scattering problem in critical spaces for the nonlinear Schr\"odinger equation (NLS) on waveguide manifolds. Our work is primarily inspired by the recent paper of Kwak and Kwon \cite{KwakKwon} that established the local well-posedness of the periodic NLS with possibly non-algebraic nonlinearity. While we adopt a framework similar to \cite{KwakKwon} for our problem, two main obstacles prevent its direct adaptation to the waveguide setting. First, the classical Strichartz estimates for NLS in critical product spaces, introduced by Hani and Pausader, possess limited endpoints and are thus inapplicable to high-dimensional waveguides. Second, the crucial fractional arguments used in \cite{KwakKwon} rely on a well-known fractional derivative formula due to Strichartz, which admits only a Hilbert space-valued extension and is therefore incompatible with our model setting.

To overcome these difficulties, we develop an anisotropic generalization of the framework in \cite{KwakKwon} using the anisotropic Strichartz estimates established by Tzvetkov and Visciglia, which allow for nearly unlimited endpoints. We also resolve several new challenges arising from the vector-valued and anisotropic nature of the model by employing novel interpolation techniques within Besov spaces. As a further novelty, we provide a new proof of the main result based on classical fixed point arguments, differing from the approximation methods used in \cite{KwakKwon}. Consequently, we settle the small data scattering problem in critical spaces for the NLS with arbitrary mass-supercritical nonlinearity on waveguide manifolds.
\end{abstract}

%\keywords{Critical spaces, Schr\"odinger equation, scattering, waveguide manifold}
%\subjclass[2020]{To be inserted}

\maketitle

%\setcounter{tocdepth}{1}
%\tableofcontents

\section{Introduction and main results}
In this paper, we consider the nonlinear Schr\"odinger equation (NLS)
\begin{align}\label{nls}
(i\pt_t+\Delta_{x,y})u=\pm|u|^{\alpha}u
\end{align}
posed on the waveguide manifold $\R^m\times\T^n$, where $m,n\in\N$ and $\T$ denotes the $2\pi$-torus. Our aim is to establish the following small data scattering result:

\begin{theorem}\label{main thm}
Let $\alpha>\frac{4}{m}$, $d=m+n$, $s_c=\frac{d}{2}-\frac{2}{\alpha}$ and suppose $s_c<1+\alpha$. Then there exists some $\delta>0$ such that for any $u_0\in H^{s_c}(\R^m\times\T^n)$ satisfying $\|u_0\|_{H^{s_c}(\R^m\times\T^n)}\leq \delta$, \eqref{nls} admits a global scattering solution $u\in C(\R;H^{s_c}(\R^m\times\T^n))$ with $u(0)=u_0$. Here, a global scattering solution is referred to as a solution satisfying the following property: there exist $\phi^\pm\in H^s(\R^m\times\T^n)$ such that
\begin{align}\label{scattering_def}
\lim_{t\to\pm\infty}\|u(t)-e^{it\Delta_{x,y}}\phi^\pm\|_{H^s(\R^m\times\T^n)}=0.
\end{align}
\end{theorem}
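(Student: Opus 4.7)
The plan is to realize $u$ as the fixed point of the Duhamel map
\[
\Phi(u)(t) := e^{it\Delta_{x,y}}u_0 \mp i\int_0^t e^{i(t-s)\Delta_{x,y}}\bigl(|u|^\alpha u\bigr)(s)\,ds
\]
inside a small closed ball of the critical resolution space $Y^{s_c}(\R)$. In contrast to the approximation-by-algebraic-nonlinearities strategy employed in \cite{KwakKwon}, the assumed smallness of $u_0$ should let the Banach fixed point theorem operate directly, once the appropriate linear and nonlinear estimates are in place.

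First I would establish the homogeneous linear estimate
\[
\|e^{it\Delta_{x,y}}u_0\|_{Y^{s_c}(\R)} \lesssim \|u_0\|_{H^{s_c}(\R^m\times\T^n)},
\]
together with its inhomogeneous counterpart controlling $\int_0^t e^{i(t-s)\Delta_{x,y}}F(s)\,ds$ by an appropriate dual norm of $F$. The key tool is the anisotropic Strichartz inequality of Tzvetkov and Visciglia, which decouples the integrability in $x\in\R^m$ from that in $y\in\T^n$ and yields a nearly unrestricted range of admissible exponents. This flexibility is precisely what the Hani--Pausader isotropic estimates fail to provide in high dimension, and is what will permit the treatment of any mass-supercritical $\alpha>4/m$.

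The core of the argument is the tame Lipschitz nonlinear estimate
\[
\bigl\||u|^\alpha u - |v|^\alpha v\bigr\|_{N^{s_c}(\R)} \lesssim \bigl(\|u\|_{Y^{s_c}(\R)}^{\alpha}+\|v\|_{Y^{s_c}(\R)}^{\alpha}\bigr)\|u-v\|_{Y^{s_c}(\R)},
\]
where $N^{s_c}$ denotes the dual nonlinear space adapted to $Y^{s_c}$. Since $\alpha$ need not be an even integer, one must pass $s_c$ fractional derivatives through the non-smooth map $z\mapsto|z|^\alpha z$. Strichartz's classical fractional-derivative formula, which underlies \cite{KwakKwon}, admits only a Hilbert-valued extension and therefore breaks down in the presence of the periodic factor $\T^n$. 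To sidestep this obstruction I would work within anisotropic Besov scales, performing a Littlewood--Paley decomposition in the Euclidean variable, handling the torus variable through vector-valued Strichartz norms, and closing the estimate by a new real-interpolation argument; the condition $s_c<1+\alpha$ enters here precisely to ensure the fractional Lipschitz control required by the interpolation. This is the step I expect to be the main obstacle, and is where the novel Besov interpolation machinery advertised in the introduction comes into play.

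With the linear and multilinear estimates in hand, for $\delta$ sufficiently small the map $\Phi$ contracts the ball $\{u\in Y^{s_c}(\R):\|u\|_{Y^{s_c}(\R)}\leq 2C\delta\}$ into itself, producing the unique global solution. The scattering property \eqref{scattering_def} is then a standard consequence of the global $Y^{s_c}$-bound: re-running the nonlinear estimate on the time-tails $[T,\infty)$ shows that $e^{-it\Delta_{x,y}}u(t)$ is Cauchy in $H^{s_c}(\R^m\times\T^n)$ as $t\to\pm\infty$, and its limits are the desired asymptotic states $\phi^{\pm}$.
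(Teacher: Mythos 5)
The decisive gap is your ``tame Lipschitz nonlinear estimate'' at critical regularity. For a non-algebraic nonlinearity the map $z\mapsto|z|^\alpha z$ has only limited smoothness, and for $s_c$ in the admitted range (which can exceed $\alpha$, and is only constrained by $s_c<1+\alpha$) the difference bound
\begin{align*}
\bigl\||u|^\alpha u-|v|^\alpha v\bigr\|_{N^{s_c}(\R)}\lesssim\bigl(\|u\|_{Y^{s_c}}^{\alpha}+\|v\|_{Y^{s_c}}^{\alpha}\bigr)\|u-v\|_{Y^{s_c}}
\end{align*}
is precisely what is \emph{not} available: writing $|u|^\alpha u-|v|^\alpha v$ via the Wirtinger derivatives forces one to place $s_c$ derivatives on $\int_0^1\pt_z\mathcal{N}(v+\theta(u-v))\,d\theta$, and $\pt_z\mathcal{N}$ has only finite H\"older regularity, so at critical regularity one obtains at best H\"older (not Lipschitz) dependence on $u-v$. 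The hypothesis $s_c<1+\alpha$ guarantees only that the nonlinearity itself carries $s_c$ derivatives (this is Lemma \ref{nonlinear est 2}, $\||u|^\alpha u\|_{(Z^{-s})'}\lesssim\|u\|_{Z^s}^{1+\alpha}$), not a Lipschitz difference estimate at level $s_c$. Indeed, if your estimate held, the entire Kwak--Kwon apparatus (treating $A=|u|^\alpha$ in Besov spaces of negative temporal regularity, the auxiliary $Z^s$-space, the Galilean-transform multilinear estimates) would be superfluous; its necessity is exactly the failure of the naive contraction in $Y^{s_c}$ that your scheme assumes.

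The paper's proof repairs this structurally, and this is the point your outline misses: the fixed point is run on the set $\mathcal{K}=\{u:\|u\|_{Y^{s_c}}\leq 2C_1,\ \|u\|_{Z^{s_c}}\leq 2C_2\}$ equipped with the \emph{weak} metric $\rho(u,v)=\|u-v\|_{Y^0}$, which Lemma \ref{lem7.1} shows (by a Fatou argument) is complete. At regularity zero the difference estimate is cheap: the pointwise bound $\bigl||u|^\alpha u-|v|^\alpha v\bigr|\lesssim|u-v|\,(|u|^{\alpha}+|v|^{\alpha})$ combined with the trilinear estimate of Lemma \ref{lem5.5-} yields $\|\Phi(u)-\Phi(v)\|_{Y^0}\lesssim\|u-v\|_{Y^0}\,(\|u\|_{Z^{s_c}}+\|v\|_{Z^{s_c}})^{\alpha}$, while the critical norms $Y^{s_c}$, $Z^{s_c}$ are merely propagated (self-mapping), never contracted. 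To make your argument sound you must either adopt this weak-metric contraction or fall back on an approximation scheme as in Kwak--Kwon. The remaining ingredients you list (the transfer of the Tzvetkov--Visciglia anisotropic Strichartz estimates into the atomic spaces, Besov-space fractional calculus for the self-bound, and the tail argument $\|u\|_{Z^{s_c}([t,\infty))}\to 0$ for scattering) are consistent with the paper, but note that the multilinear estimates are carried by the intermediate space $Z^{s_c}$ (with $Y^{s_c}\hookrightarrow Z^{s_c}$), not by $Y^{s_c}$ directly.
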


\subsubsection*{Background and motivation}
In this paper, we primarily study NLS models on the product space $\R^m\times\T^n$, known in the literature as a \textit{waveguide manifold}. This interest stems from a fundamental observation: while global solutions to NLS on Euclidean space with at least mass-critical nonlinearities could scatter in the sense of \eqref{scattering_def}, global solutions on tori of the same dimension generally fail to scatter, see e.g. \cite{global_but_not_scatter} for a concrete counter example in the latter case. This contrast motivates our investigation into whether scattering persists for NLS on waveguide manifolds.

Heuristically, while the partial boundedness of the product space may confine particles within the compact domain, they remain free to propagate infinitely far along the Euclidean directions. This dynamic suggests that possible scattering behavior for global solutions might still be able to take place. Consequently, scattering should only be expected when the Euclidean dimension is sufficiently high. Indeed, scaling analysis as discussed by Hani and Pausader \cite{HaniPausader} indicates that scattering occurs provided the nonlinearity order $\alpha$ satisfies $\alpha \geq \frac{4}{m}$. That is, the nonlinearity must be at least mass-critical with respect to the Euclidean dimension.

The study of NLS on product spaces traces back to the seminal works \cite{TNCommPDE,TzvetkovVisciglia2016} by Tzvetkov and Visciglia, who established well-posedness and scattering results for NLS on $\mathbb{R}^m \times \mathcal{M}^n$ with $\mathcal{M}$ a compact manifold. A key contribution of \cite{TNCommPDE,TzvetkovVisciglia2016} lies in the derivation of suitable Strichartz estimates through delicate spectral analysis along the compact direction. Crucially, these works address the scaling-subcritical regime, where the Sobolev embedding $H^{\frac{n}{2}+} (\mathcal{M}^n)\hookrightarrow L^\infty(\mathcal{M}^n)$ significantly simplifies computations. In contrast, Hani and Pausader \cite{HaniPausader} pioneered the analysis of the scaling-critical case by investigating the quintic NLS in $H^1(\mathbb{R} \times \mathbb{T}^2)$, a model being simultaneously mass- and energy-critical. The critical nature of the model necessitates advanced tools, such as the framework of atomic spaces and mixed discrete-continuous Strichartz estimates. Without being exhaustive, we refer to \cite{RmT1,ZhaoZheng2021,CubicR2T1Scattering,R1T1Scattering,Luo_inter,Luo_energy_crit,luo2021sharp,luo2023sure} for more references in this direction.

\subsubsection*{Challenges and strategies}
In this paper, we address the small data scattering problem in critical spaces of the NLS on waveguide manifolds in the full mass-supercritical regime, a setting being not covered by \cite{TNCommPDE,TzvetkovVisciglia2016} and \cite{HaniPausader}. To see this, we firstly point out that on the one hand, the crucial embedding $H^{\frac{n}{2}+}(\mathcal{M}^n) \hookrightarrow L^\infty(\mathcal{M}^n)$ becomes unavailable, hence eliminating key technical simplification as in \cite{TNCommPDE,TzvetkovVisciglia2016} and necessitating both novel approaches and more sophisticated frameworks. On the other hand, the model under consideration encompasses non-algebraic nonlinearities which are intrinsically incompatible with the multilinear estimates in \cite{HaniPausader} designed exclusively for algebraic cases.

Our approach builds upon the recent work of Kwak and Kwon \cite{KwakKwon} that established local well-posedness for the periodic NLS with mass-supercritical and potentially non-algebraic nonlinearities. We briefly outline their key insight for handling non-algebraic terms: rather than directly utilizing the term-by-term product structure of $|u|^\alpha u$, Kwak and Kwon's strategy is to treat $A = |u|^\alpha$ and $u$ as independent objects and to develop multilinear estimates in terms of both quantities. Crucially, while we are yet unable to obtain cancelation effect in the frequency space as by dealing with the algebraic case, the novel observation of Kwak and Kwon is that certain cancellation can still be gained through considering estimates of $A$ in Besov spaces with negative temporal regularity. This breakthrough enabled them to resolve the local well-posedness problem for periodic NLS in the full mass-supercritical regime.

While we adopt a similar framework for our problem, several new difficulties emerge in the waveguide setting. As first attempts, comparing the approaches in \cite{HaniPausader} and \cite{KwakKwon} might suggest that replacing the periodic Strichartz estimates from \cite{KwakKwon} with their waveguide counterparts from \cite{HaniPausader} would suffice. Such direct substitution will nevertheless fail. To see this, we recall the Strichartz estimate established in \cite{HaniPausader} (see also \cite[Thm. 1]{Barron}):
$$ \|e^{it\Delta_{x,y}} f\|_{\ell^q_\gamma L_{t,x,y}^p([\gamma-\pi,\gamma+\pi]\times\R^m\times\T^n)}
\lesssim \|f\|_{H^s(\R^m\times\T^n)}$$
with exponents $p>2+\frac{4}{d}$, $q=\frac{4p}{m(p-2)}$ satisfying $q>2$ and $s=\frac{d}{2}-\frac{d+2}{p}$. At this point, we underline that the constraint $q>2$ requires  $p<2+\frac{4}{m-2}$ when $m\geq 3$. This becomes problematic for $\alpha\gg 1$: the Lebesgue exponent $\frac{\alpha(d+2)}{2}$ appearing in the nonlinear estimation becomes unbounded as $\alpha\to\infty$, which eventually exceeds the number $2+\frac{4}{m-2}$ and invalidates the applicability of the Strichartz estimates.

To resolve this technical obstacle, our pivotal observation is that the Strichartz estimates due to Tzvetkov and Visciglia \cite{TNCommPDE,TzvetkovVisciglia2016}
\begin{align}\label{example strichartz}
\|e^{it\Delta_{x,y}}f\|_{L_t^p L_x^q L_y^2(\mathbb{R}\times\mathbb{R}^m\times\mathbb{T}^n)} \lesssim \|f\|_{L^2(\mathbb{R}^m\times\mathbb{T}^n)},
\end{align}
where $(p,q)$ is an admissible Strichartz pair, admit almost unlimited endpoints in the sense that the number $p$ can be taken arbitrarily large. Leveraging this flexibility, our plan is to employ the Strichartz estimate \eqref{example strichartz} to prove our main small data scattering result. The anisotropy of \eqref{example strichartz} nevertheless creates additional incompatibilities with Kwak and Kwon's framework. Specifically, the high-low frequency analysis given in \cite{KwakKwon} relies on Visan's fractional chain rule for H\"older continuous functions \cite{defocusing5dandhigher}, which depends fundamentally on Strichartz's difference characterization of fractional derivatives \cite{Strichartz_derivative}. In our anisotropic setting, derivatives taken along Euclidean directions require a vector-valued generalization of Strichartz's result. As pointed out in \cite{WalkerThesis}, such generalizations are only feasible for Hilbert target spaces, which being a condition violated in our setting since the underlying periodic target space follows an $L^\infty$-scaling. To overcome this limitation, we develop new fractional chain and product rules within Besov spaces that serve as more accommodating function spaces in vector-valued settings.

Finally, we shall conclude by proving Theorem \ref{main thm} using classical fixed-point arguments which differ from the approximation methods applied in \cite{KwakKwon}. This approach leverages the completeness of the underlying metric space under the weaker norm $Y^0$, which admits stronger multilinear estimates than those available in the smaller space $Y^{s_c}$. We refer to Lemma \ref{lem7.1} for further details.

\subsubsection*{Some final remarks}

We end this introductory section by giving several concluding remarks.

\begin{remark}\label{rem1.2}
We emphasize that in Theorem \ref{main thm}, we are unable to establish a small-data scattering result in the endpoint case $\alpha=\frac{4}{m}$. The main reason lies in the fact that when applying Kwak and Kwon's framework, one must consider functions in certain Besov spaces with positive temporal regularity, hence not fitting with the mass-critical scaling. Notably, unlike the periodic case, where mass-critical Strichartz estimates are known to possess necessary derivative losses \cite{Bourgain1,KwakPi}, making endpoint well-posedness unlikely to hold (at least in the classical sense), the anisotropic Strichartz estimates on product spaces do actually hold at the mass-critical endpoint. In fact, for waveguide manifolds, endpoint small data scattering has been established for algebraic nonlinearities (see e.g. \cite{HaniPausader}), where the cancellation effects of the Schr\"odinger operator can be fully employed. From this perspective, we conjecture that Theorem \ref{main thm} should remain valid in the mass-critical endpoint case.
\end{remark}

\begin{remark}
The constraint $s_c<1+\alpha$ is natural since the regularity order of the nonlinearity $|u|^\alpha u$ is smaller than $1+\alpha$.
\end{remark}

\begin{remark}
Since the proof of Theorem \ref{main thm} proceeds via fixed-point arguments within a metric space $\mathcal{K}$, the global scattering solution constructed herein is in fact unique within $\mathcal{K}$. The question of whether this solution is also unique in the class $C_t H_{x,y}^{s_c}$ is known in the literature as the \textit{unconditional uniqueness} problem. For NLS posed on Euclidean spaces, such unconditional uniqueness problems can be easily solved by a straightforward application of the Strichartz estimates, see e.g. \cite{defocusing3d}. On the contrary, for NLS on compact manifolds, the absence of well-established Strichartz estimates renders this problem far more challenging. Recent advances in \cite{uncond1,uncond2,uncond3} have resolved the unconditional uniqueness problem for the cubic and quintic periodic NLS with at least energy-critical nonlinearities. Nevertheless, the framework developed in these works is unlikely to extend to NLS models with non-algebraic nonlinearities, as it relies fundamentally on linear structures inherent to quantum many-body systems that are invalidated for non-algebraic interactions. This limitation thereby highlights intriguing open problems for future investigation, which we do not elaborate on further here.
\end{remark}

\begin{remark}
It is worth mentioning that Kwak recently established in \cite{kwak_large_data} the large data well-posedness result for the periodic energy-critical NLS in arbitrary dimension $d\geq 3$, building on the prior work \cite{KwakKwon}. Crucially, the main contribution of \cite{kwak_large_data} is demonstrating that concentration compactness arguments remain applicable even when classical stability theories fail. Inspired by this result, we also expect a large data scattering result for \eqref{nls} in the energy-critical case ($s_c=1$) will continue to hold. While we believe this should be achievable for $n=1$ (where the model is mass-supercritical w.r.t. the Euclidean dimension) by combining arguments from \cite{kwak_large_data} with the semivirial-vanishing geometry theory developed in the author's series of works \cite{Luo_inter,Luo_energy_crit,luo2021sharp}, we encounter a major new challenge in the mass-critical case $n=2$. Here, not only is the small data scattering result currently unknown (as pointed out by Remark \ref{rem1.2}), but we also lack a clear understanding of the corresponding fractional resonant system -- a key tool for studying large data scattering for mass-critical NLS on waveguide manifolds (see e.g. \cite{HaniPausader}) -- which is not even well-defined.
%As they are out of scope of the present paper, we shall leave these interesting problems for future research.
\end{remark}

\section{Notation}\label{sec 2.1}
We use the notation $A\lesssim B$ whenever there exists some positive constant $C$ such that $A\leq CB$. Similarly we define $A\gtrsim B$ and use $A\sim B$ when $A\lesssim B\lesssim A$. For a $k$-dimensional vector $\xi\in \C^k$, the Japanese bracket is defined by $\la\xi\ra:=(1+|\xi|^2)^{\frac12}$.

As usual, we use the symbol $\mathcal{F}(f)$ or $\widehat{f}$ to denote the Fourier transformation of a function $f$. Since we will be dealing with functions defined on a product space $\R^m\times\T^n$, we shall use $\mathcal{F}_x, \mathcal{F}_y$ (with $x\in\R^m$ and $y\in\T^n$) as well as $\mathcal{F}$ to denote the Fourier transformation along partial or total directions respectively.

Finally, the fractional derivative operator $D_x^s$ with $s\in \R$ is defined via its symbol $\mathcal{F}_x(D_x^s)(\xi)=|\xi|^s$ for $\xi\in\R^m$. The periodic fractional derivative operator $D_y^s$ is defined analogously.

\subsubsection*{Function spaces}
Given a Banach space $E$, numbers $k\in\N$, $m\in\N_0$, $p\in[1,\infty)$ and a set $\Omega\subset\R^k$, we define the following norms for a function $f:\Omega\to E$:
\begin{align*}
\|f\|_{L_z^p E}:=\bg(\int_{\Omega}\|f(z)\|_E^p\,dz\bg)^{\frac{1}{p}},\quad
\|f\|_{W_z^{m,p} E}:=\sum_{|\alpha|\leq m}\|\pt_z^\alpha f\|_{L^p E}.
\end{align*}
The norms in the case $p=\infty$ are defined by convention. We will mainly consider the cases $z\in\{t,x,y\}$ which stand for $\Omega\in\{\R,\R^m,\T^n\}$. For $s\in\R$ and $p\in(1,\infty)$, we shall also use the Sobolev norm $\|f\|_{H^{s,p}_y}:=\|\mathcal{F}_y^{-1}(\la\xi\ra^s\mathcal{F}_y(f)(\xi))\|_{L^p}$ via the Bessel potential.

For the sake of keeping the paper as short and concise as possible, the temporal or spatial integration domains will not be written explicitly in most cases, e.g. the space $L_t^{p_1} L_x^{p_2} L_y^{p_3}(\R\times\R^m\times\T^n)$ will be abbreviated as $L_t^{p_1} L_x^{p_2} L_y^{p_3}$ or $L_t^{p_1} L_x^{p_2} L_y^{p_3}(\R)$, where the one-dimensional Euclidean space $\R$ in the latter is referred to as the temporal integration domain.

Finally, for $s\in\R$, $p\in[1,\infty]$ we define the discrete weighted Lebesgue spaces
\begin{align*}
\ell_N^{s,p}:=\{f:2^\N \to\C:\|f\|_{\ell^{s,p}_N}:=\bg(\sum_{N\in 2^\N} N^{sp}|f_N|^p\bg)^{\frac{1}{p}}<\infty\}.
\end{align*}
When $s=0$, we also denote the space $\ell_N^{s,p}$ by $\ell^p_N$. The space $\ell^p_j$ with $j\in A\subset\N$ is defined in a similar fashion.

\subsubsection*{Littlewood-Paley projectors}
We fix $\eta_1\in C^\infty_c(\R;[0,1])$ to be a one-dimensional bump function such that $\eta(t)\equiv 1$ for $|t|\leq 1$ and $\eta(t)\equiv 0$ for $|t|\geq 0$. For $k\in\N$ define $\eta_k (t_1,...t_k):=\prod_{i=1}^k \eta_1 (t_i)$. Then the Littlewood-Paley (LP) projectors $P_{\leq N}$ and $P_N$ on $\Omega^k$ with $\Omega\in\{\R,\T\}$ for a dyadic number $N\in 2^\N$ with $N\geq 2$ are defined via their symbols
$$ \mathcal{F}(P_{\leq N})(\xi)=\eta_d(\xi/N),\quad P_N:= P_{\leq N}-P_{\leq N/2},$$
where for $N=1$ we shall simply define $P_{1}:=P_{\leq 1}:=\mathcal{F}^{-1}(\eta_d)$. In our paper we shall use the LP-projectors in an anisotropic way, i.e. we make use of the LP-projectors along different directions. We use different superscripts to indicate the dependence of the LP-projectors on their arguments, e.g. $P_N^t$, $P_{\leq N}^x$ etc. Moreover, the symbol $P$ without superscripts is referred to as the LP-projector defined on the whole spatial domain $\R^m\times\T^n$, e.g. $P_{\leq N}:=P^x_{\leq N}P^y_{\leq N}$. By telescoping arguments one easily verifies that
\begin{align}\label{2.1}
P_N=P_N^x P_{\leq N}^y+P_{\leq N/2}^x P_N^y.
\end{align}
As we shall see, the identity \eqref{2.1} will play a crucial role in the anisotropic analysis appearing in the upcoming proofs.

\subsubsection*{Fixed and small numbers}
Throughout the paper, we define $p_0$ as the fixed number $p_0:=2+\frac{4}{m}$. Moreover, the following small numbers will be used in increasing order: $0<\sigma\ll \sigma_1\ll 1$.

\subsubsection*{Admissible Strichartz pair}
For $s\in(0,\infty)$, a pair $(q,r)$ is said to be an $s$-admissible Strichartz pair (w.r.t. the Euclidean dimension $m$) if $q,r\in[2,\infty]$, $\frac{2}{q}+\frac{m}{r}=\frac{m}{2}-s$ and $(q,r,m)\neq(2,\infty,2)$. When $\kappa=0$ we shall simply call the pair an admissible Strichartz pair.

\subsubsection*{Atomic spaces and their properties}
Next, for $s\in\R$ we introduce the spaces $X^s$ and $Y^s$.
%Let $u:\R_t\times (\R_x^3\times\T_y)\to\C$ be a measurable function.
Denote by $C=(-1/2,1/2]^d$ the unit cube in $\R^d$. For $z\in\R^d$ the translated cube $C_z$ is defined by $C_z:=C+z$. Moreover, we define the projector $P_{C_z}$ by
$$\mathcal{F}(P_{C_z}u):=\chi_{C_z}\mathcal{F}(u),$$
where $\chi_{C_z}$ is the characteristic function of $C_z$. For $s\in\R$ we then define the spaces $X_0^s(\R)$ and $Y^s(\R)$ through the norms
\begin{align*}
\|u\|_{X^s_0(\R)}^2:=\sum_{z\in\Z^d}\la z\ra^{2s}\|P_{C_z} u\|_{U_{\Delta_{x,y}}^2(\R;L^2_{x,y})},\\
\|u\|_{Y^s(\R)}^2:=\sum_{z\in\Z^d}\la z\ra^{2s}\|P_{C_z} u\|_{V_{\Delta_{x,y}}^2(\R;L^2_{x,y})},
\end{align*}
where $U_{\Delta_{x,y}}^2$ and $V_{\Delta_{x,y}}^2$ are the standard atom spaces taking values in $L_{x,y}^2$ (see for instance \cite{HadacHerrKoch2009} for their precise definitions). For any subinterval $I\subset\R$, the space $X^s(I)$ is defined through the norm
\begin{align*}
\|u\|_{X^s(I)}:=\inf\{\|v\|_{X^s_0(\R)}:v\in X_0^s(\R),\,v|_I=u|_I\}.
\end{align*}
The space $Y^s(I)$ is similarly defined. For an interval $I=(a,b)$, the space $N^s(I)$ is defined through the norm
\begin{align*}
\|u\|_{N^s(I)}:=\bg\|\int_a^t e^{i(t-\sigma)\Delta_{x,y}}u(\sigma)\,d\sigma\bg\|_{X^s(I)}.
\end{align*}
We record the following useful properties of the previously defined function spaces.

\begin{lemma}[Embeddings between function spaces, \cite{HadacHerrKoch2009,HerrTataruTz1}]\label{embedding lem}
For any $s\in\R$ and $p\in(2,\infty)$ we have
\begin{align*}
U^2_{\Delta_{x,y}}(I;H^s_{x,y})\hookrightarrow X^s(I)\hookrightarrow Y^s(I)\hookrightarrow V_{\Delta_{x,y}}^2(I;H_{x,y}^s)
\hookrightarrow U^p_{\Delta_{x,y}}(I;H_{x,y}^s)\hookrightarrow L_t^\infty(I;H_{x,y}^s).
\end{align*}
\end{lemma}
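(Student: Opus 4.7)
The chain of embeddings splits naturally into two kinds: the outer ones relating the cube-decomposed spaces $X^s,Y^s$ to the atomic spaces $U^2_{\Delta_{x,y}}(I;H^s_{x,y})$ and $V^2_{\Delta_{x,y}}(I;H^s_{x,y})$, and the inner ones $X^s\hookrightarrow Y^s$, $V^2\hookrightarrow U^p$, $U^p\hookrightarrow L^\infty_t$ which are standard atomic-space facts from \cite{HadacHerrKoch2009}. The entire argument rests on the frequency-localized Plancherel equivalence
$$\|\phi\|_{H^s_{x,y}}^2\sim\sum_{z\in\Z^d}\la z\ra^{2s}\|P_{C_z}\phi\|_{L^2_{x,y}}^2,$$
which follows from $|\xi|\sim\la z\ra$ for $\xi\in C_z$. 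Restriction to a subinterval is handled throughout by the infimum definition of $X^s(I),Y^s(I)$, so it suffices to argue on the full line.

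For $U^2_{\Delta_{x,y}}(H^s)\hookrightarrow X^s$, the plan is to test against a $U^2$-atom $a(t)=\sum_k\chi_{I_k}(t)e^{it\Delta_{x,y}}v_k$ with $\sum_k\|v_k\|_{H^s}^2\leq 1$. Since $P_{C_z}$ commutes with $e^{it\Delta_{x,y}}$, $P_{C_z}a$ is itself a $U^2_{\Delta_{x,y}}(L^2)$-atom with coefficient $\bigl(\sum_k\|P_{C_z}v_k\|_{L^2}^2\bigr)^{1/2}$. Swapping the $z$- and $k$-sums and invoking the Plancherel identity above gives $\sum_z\la z\ra^{2s}\|P_{C_z}a\|_{U^2_{\Delta_{x,y}}(L^2)}^2\lesssim 1$, so every atom lies in the unit ball of $X^s_0$. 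Extending by the $\ell^1$ atomic decomposition of $U^2$ and Minkowski's inequality then yields the desired embedding.

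The embedding $X^s\hookrightarrow Y^s$ is immediate from the pointwise bound $\|P_{C_z}u\|_{V^2_{\Delta_{x,y}}(L^2)}\leq \|P_{C_z}u\|_{U^2_{\Delta_{x,y}}(L^2)}$, which is just the classical $U^2\hookrightarrow V^2$ at the level of each cube. For $Y^s\hookrightarrow V^2_{\Delta_{x,y}}(H^s)$, I would apply the Plancherel identity fiberwise to each increment $e^{-it_k\Delta_{x,y}}u(t_k)-e^{-it_{k-1}\Delta_{x,y}}u(t_{k-1})$ of an arbitrary partition $\{t_k\}$, interchange the $z$- and $k$-summations, and pass to the supremum over partitions on the left-hand side, which is then dominated term-by-term by the right-hand side.

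Finally, $V^2_{\Delta_{x,y}}(H^s)\hookrightarrow U^p_{\Delta_{x,y}}(H^s)$ for $p\in(2,\infty)$ and $U^p_{\Delta_{x,y}}(H^s)\hookrightarrow L^\infty_t(H^s)$ are the standard Hadac--Herr--Koch embeddings and require no modification. The main technical obstacle across the whole chain is really the first step: producing atomic decompositions whose cube-localizations remain atoms with controlled sums of squared coefficients. Once the Plancherel equivalence and the commutation $[P_{C_z},e^{it\Delta_{x,y}}]=0$ are combined, the remaining inclusions unfold mechanically from the atomic-space toolkit.
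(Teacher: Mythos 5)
The paper does not prove this lemma---it is quoted directly from the cited works of Hadac--Herr--Koch and Herr--Tataru--Tzvetkov---and your sketch correctly reproduces the standard argument underlying those references: the commutation of $P_{C_z}$ with $e^{it\Delta_{x,y}}$, the almost-orthogonality $\|\phi\|_{H^s_{x,y}}^2\sim\sum_{z\in\Z^d}\langle z\rangle^{2s}\|P_{C_z}\phi\|_{L^2_{x,y}}^2$ (note the comparison should read $\langle\xi\rangle\sim\langle z\rangle$ for $\xi\in C_z$ rather than $|\xi|\sim\langle z\rangle$, and the $V^2\hookrightarrow U^p$ step tacitly uses the right-continuous normalized $V^2$ convention of the cited sources), the atomic decomposition for the $U^2$ endpoint, partition-wise summation for the $V^2$ endpoint, and the standard chain $U^2\hookrightarrow V^2\hookrightarrow U^p\hookrightarrow L_t^\infty$. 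Your proposal is correct and takes essentially the same route as the proofs in the cited references.
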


\begin{lemma}[Duality of $N^s$ and $Y^{-s}$, \cite{HerrTataruTz1}]\label{dual lem}
For $u\in L_t^1H_{x,y}^1(I)$ we have
\begin{align*}
\|u\|_{N^s(I)}\lesssim \sup_{\|v\|_{Y^{-s}(I)}=1}\bg|\int_{I\times(\R^m\times\T^n)}u(t,x,y)\bar{v}(t,x,y)\,dxdydt\bg|.
\end{align*}
\end{lemma}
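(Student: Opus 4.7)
The plan is to deduce the duality from the atomic $U^2$-$V^2$ duality underlying the Koch-Tataru function spaces, localized in frequency by the unit-cube projectors $\{P_{C_z}\}_{z\in\Z^d}$ that define $X^s$ and $Y^{-s}$.

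First, I would unwrap the $N^s$-norm. With $I=(a,b)$ and $(Tu)(t):=\int_a^t e^{i(t-\sigma)\Delta_{x,y}}u(\sigma)\,d\sigma$, we have $\|u\|_{N^s(I)}=\|Tu\|_{X^s(I)}$ by definition. Extending $u$ by zero outside $[a,b]$ produces a natural extension $\widetilde{Tu}$ of $Tu|_I$ to all of $\R$ (a free Schr\"odinger evolution for $t>b$, zero for $t<a$), so it suffices to bound $\|\widetilde{Tu}\|_{X^s_0(\R)}$. By the definition of the $X^s_0$-norm,
\begin{align*}
\|\widetilde{Tu}\|_{X^s_0(\R)}^2=\sum_{z\in\Z^d}\la z\ra^{2s}\|P_{C_z}\widetilde{Tu}\|_{U^2_{\Delta_{x,y}}L^2_{x,y}}^2.
\end{align*}
At each frequency tile, the classical causal $U^2$-$V^2$ duality (Hadac-Herr-Koch) combined with the self-adjointness of $P_{C_z}$ yields
\begin{align*}
\|P_{C_z}\widetilde{Tu}\|_{U^2_{\Delta_{x,y}}L^2_{x,y}}\lesssim\sup_{\substack{\|v_z\|_{V^2_{\Delta_{x,y}}L^2_{x,y}}\le 1\\ \mathrm{supp}\,\widehat{v_z}\subset C_z}}\bg|\int_{\R\times\R^m\times\T^n}(P_{C_z}u)\,\bar v_z\,dx\,dy\,dt\bg|.
\end{align*}

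Next I would patch these frequency-localized bounds together by an $\ell^2$ duality argument. For any sequence $(c_z)_{z\in\Z^d}$ with $\sum_z c_z^2=1$ and near-optimal $v_z$, Cauchy-Schwarz produces
\begin{align*}
\|\widetilde{Tu}\|_{X^s_0(\R)}\lesssim\sup\sum_{z}c_z\la z\ra^s\int_\R\la P_{C_z}u,v_z\ra_{L^2_{x,y}}\,dt.
\end{align*}
The key construction is the composite test function $v:=\sum_z c_z\la z\ra^s v_z$. Since the tiles $\{C_z\}$ are disjoint, $P_{C_z}v=c_z\la z\ra^s v_z$, and therefore
\begin{align*}
\|v\|_{Y^{-s}(\R)}^2=\sum_z\la z\ra^{-2s}\|P_{C_z}v\|_{V^2_{\Delta_{x,y}}L^2_{x,y}}^2\le\sum_z c_z^2=1,
\end{align*}
while orthogonality of the $\{P_{C_z}\}$ on $L^2_{x,y}$ gives $\sum_z c_z\la z\ra^s\int\la P_{C_z}u,v_z\ra\,dt=\int_\R\la u,v\ra\,dt$.

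Finally, since the extension of $u$ is supported in $I$, the integral collapses to one over $I$, and the restriction satisfies $\|v|_I\|_{Y^{-s}(I)}\le\|v\|_{Y^{-s}(\R)}$ by definition of the restriction norm, yielding the claimed bound. The main obstacle is Step 2: one must verify that the single-tile $U^2$-$V^2$ duality remains valid for the operator $P_{C_z}T$ with the causal Duhamel structure and the zero-extension of $u$, and that selecting measurable near-optimal $v_z$ uniformly in $z$ is legitimate (this requires either an approximation argument or an explicit use of the Hahn-Banach extension). Once these measurability and causality points are handled, the rest of the proof is a formal $\ell^2$-manipulation powered by the tiling orthogonality of $\{P_{C_z}\}$.
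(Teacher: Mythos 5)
Your argument is correct and coincides with the standard proof of this duality estimate in Herr--Tataru--Tzvetkov \cite{HerrTataruTz1}, which the paper itself cites without reproving: frequency-tile-wise $U^2$--$V^2$ duality for the Duhamel integral (using that $P_{C_z}$ commutes with the propagator and is self-adjoint), followed by an $\ell^2$-duality assembly of the composite test function $v=\sum_z c_z\langle z\rangle^s v_z$ whose $Y^{-s}$-norm is controlled by tile disjointness. The technical points you flag at the end are handled in the standard way (first work with finitely many tiles and then pass to the limit, and adjust the phases of the $v_z$ so the pairings add constructively), so there is no genuine gap.
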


\section{Some preliminary tools}

\subsection{Vector-valued Besov spaces}
Given a Banach space $E$, the numbers $k\in\N$, $s\in\R$, $p,q\in[1,\infty]$ and the argument $z\in\{\R^k,\T^k\}$, the vector-valued Banach space $B_{z,p,q}^s$ is defined via the norm
$$ \|u\|_{B_{z,p,q}^s E}:=\bg(\sum_{N\in 2^\N}N^{qs}\|P_N^z(u)\|_{L_z^p E}^q\bg)^{\frac{1}{q}}+\|P_1^z u\|_{L_z^p E}.$$
For our purpose, we will alternatively make use of the following well-known characterization of the vector-valued Besov spaces.

\begin{lemma}[Difference characterizations of Besov spaces, \cite{amannbesov,Triebel_Functionspaces}]\label{lem_triebel}
The following statements hold true:
\begin{itemize}
\item[(i)]For $s\in(0,1)$, $p\in(1,\infty)$, $k\in\N$ and $\mathcal{M}\in\{\R,\T\}$ we have
\[\|u\|^p_{B_{p,p}^s(\mathcal{M}^k)}\sim \|u\|_{L^p(\mathcal{M}^k)}^p
+\int_{\mathcal{M}^k\times \mathcal{M}^k}\bg(\frac{|u(z_1)-u(z_2)|}{|z_1-z_2|^s}\bg)^p\,\frac{dz_1 dz_2}{|z_1-z_2|^{k}}.\]

\item[(ii)] Let $E$ be a Banach space. Under the same conditions for $s,p,k$ as in (i) it holds
\[\|u\|^p_{B_{p,p}^s(\R^k;E)}\sim \|u\|_{L^p(\R^k;E)}^p
+\int_{\R^k\times \R^k}\bg(\frac{\|u(z_1)-u(z_2)\|_{E}}{|z_1-z_2|^s}\bg)^p\,\frac{dz_1 dz_2}{|z_1-z_2|^{k}}.\]

\item[(iii)]Let $s\in (0,\infty)$, $p\in[1,\infty)$, $q\in [1,\infty]$, $k\in \N$, $E$ a Banach space, and let $s=[s]^{-}+\{s\}^{+}$ with $[s]^{-}\in\Z$ and $\{s\}^{+}\in[0,1)$. For $h\in \R^k$ and $u:\R^k\to E$ define
\[\Delta_h^1 u(x):=u(x+h)-u(x),\quad \Delta_h^2 u(x):=(\Delta_h^1(\Delta_h^{1} u))(x).\]
Moreover, define
\begin{align*}
[u]_{s,p,q,E}:=
\left\{
\begin{array}{ll}
\sum_{|\alpha|=[s]^{-}}\bg(\int_{\R^k} |h|^{-\{s\}^+q}\|\Delta_h^2 D^\alpha f\|_{L^{p}(\R^k;E)}^q\, \frac{dh}{|h|^n}\bg)^{\frac{1}{q}},&q<\infty,\\
\sum_{|\alpha|=[s]^{-}}\sup_{h\in \R^k\setminus\{0\}}|h|^{-\{s\}^{+}}\|\Delta_h^2 D^\alpha f\|_{L^{p}(\R^k;E)},&q=\infty.
\end{array}
\right.
\end{align*}
Then
\[\|u\|_{B_{p,q}^s (\R^k;E)}\sim \|u\|_{W^{[s]^{-},p}(\R^k;E)}+[u]_{s,p,q,E}.\]
\end{itemize}
\end{lemma}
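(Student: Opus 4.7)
The plan is to follow the classical Littlewood--Paley approach for characterizing Besov spaces by differences, as developed by Triebel in the scalar case and extended to the vector-valued setting by Amann. Throughout, the key ingredients are the Bernstein inequalities
\[\|\nabla P_N u\|_{L^p(E)}\lesssim N\|P_N u\|_{L^p(E)}\]
together with the vanishing-mean representation of $P_N$ (for $N\geq 2$) as convolution against a rapidly decaying \emph{scalar} kernel. Both remain valid in the vector-valued setting thanks to Minkowski's inequality, which is what keeps the target Banach space $E$ completely inert.

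For part (i), I would first bound the Gagliardo seminorm by the LP norm: writing $u=\sum_N P_N u$ and using the identity $\Delta_h^1 f(x)=\int_0^1 h\cdot\nabla f(x+th)\,dt$ combined with Bernstein gives
\[\|\Delta_h^1 P_N u\|_{L^p}\lesssim \min(|h|N,1)\|P_N u\|_{L^p}.\]
Splitting the $h$-integral at $|h|\sim N^{-1}$ and summing in $N$ with Fubini produces the desired Gagliardo bound. For the reverse inequality, use that $P_N$ ($N\geq 2$) is convolution with a scalar kernel $\psi_N$ of zero mean, so $P_N u(x)=\int \psi_N(y)(u(x-y)-u(x))\,dy$, and apply Minkowski together with the pointwise decay $|\psi_N(y)|\lesssim N^k \la Ny\ra^{-M}$ to recover the Gagliardo integral. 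The torus case proceeds identically, using the periodic LP projectors together with a transference argument (or, alternatively, Poisson summation of the Euclidean kernels).

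Part (ii) is then obtained by repeating the argument of (i) verbatim with the $L^p(E)$ norm in place of $L^p$: every step only relies on Minkowski's inequality and pointwise \emph{scalar} kernel bounds, both insensitive to the target space. Part (iii) extends this to higher smoothness. For multi-indices $|\alpha|=[s]^{-}$, the same scheme applies to $D^\alpha u=\sum_N D^\alpha P_N u$, but one replaces the first-order difference by the second-order difference $\Delta_h^2$. The crucial point is that the symbol $(e^{ih\xi}-1)^2$ of $\Delta_h^2$ vanishes to second order at $\xi=0$, which allows one to treat fractional parts $\{s\}^{+}\in[0,1)$ uniformly, including values approaching $1$; the Bernstein-type bound becomes
\[\|\Delta_h^2 D^\alpha P_N u\|_{L^p(E)}\lesssim \min\bigl(|h|^2 N^{|\alpha|+2},\,N^{|\alpha|}\bigr)\|P_N u\|_{L^p(E)},\]
and the two-region splitting at $|h|\sim N^{-1}$ closes the argument, the cases $q<\infty$ and $q=\infty$ being treated in parallel.

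The principal subtlety is to ensure that every convolution kernel appearing in the proof is \emph{scalar}, since a genuine vector-valued Fourier multiplier theorem would demand additional structure on $E$ such as the UMD property. Because the LP projectors, the Bessel potential, and the difference operators all act on $E$-valued functions as convolutions with scalar kernels, no such hypothesis is needed, and $E$ in (ii)--(iii) may be an \emph{arbitrary} Banach space -- a feature that is precisely what makes Besov spaces preferable to the Sobolev-type spaces used in \cite{KwakKwon} for the anisotropic applications of the present paper.
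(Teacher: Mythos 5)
The paper offers no proof of this lemma at all --- it is quoted verbatim from Amann and Triebel --- and your sketch reproduces exactly the standard Littlewood--Paley/difference argument used in those references (the bound $\|\Delta_h^1 P_N u\|_{L^p(E)}\lesssim\min(|h|N,1)\|P_N u\|_{L^p(E)}$ with a dyadic splitting at $|h|\sim N^{-1}$, where the summation in $N$ is closed by discrete Young/Schur with geometric weights, which is precisely where $0<s<1$, resp.\ $0<\{s\}^{+}$, enters; the converse via zero-mean \emph{scalar} kernels plus Minkowski, which is what keeps $E$ arbitrary; second differences for part (iii)), so it is essentially the same approach and correct in outline. One caveat: the argument is not uniform over $\{s\}^{+}\in[0,1)$ as you claim --- at the endpoint $\{s\}^{+}=0$ with $q<\infty$ the region $|h|\gtrsim N^{-1}$ of your splitting produces the divergent integral $\int_{|h|\ge 1}|h|^{-k}\,dh$, and indeed for integer $s$ one has $[u]_{s,p,q,E}=\infty$ for every nonzero $u\in W^{[s]^{-},p}(\R^k;E)$ (since $\|\Delta_h^2 D^\alpha u\|_{L^p(E)}$ does not decay as $|h|\to\infty$), so both your proof and the characterization as transcribed genuinely require $\{s\}^{+}>0$ when $q<\infty$, the $q=\infty$ (Zygmund-type) case being the only admissible integer endpoint.
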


Lemma \ref{lem_triebel} implies immediately the following corollaries.
\begin{corollary}\label{cor 3.2}
Let $\alpha\in(0,1]$ and $F,G:\C\to\C$ be $\alpha$-H\"older continuous functions with $F(0)=G(0)=0$. Then for $s\in(0,1)$, $k\in\N$ and $p_1,p_2\in(1,\infty)$ it holds $\|F(u)\|_{B_{y,p_1/\alpha,p_1/\alpha}^{\alpha s}}\lesssim
\|u\|^\alpha_{B_{y,p_1,p_1}^s }$ and
$\|G(u)\|_{B_{x,p_1/\alpha,p_1/\alpha}^{\alpha s}L_y^{p_2/\alpha}}\lesssim
\|u\|^\alpha_{B_{x,p_1,p_1}^s L_y^{p_2}}$.
\end{corollary}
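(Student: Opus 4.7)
The plan is to obtain both estimates as direct consequences of the difference characterization of Besov spaces provided by Lemma \ref{lem_triebel}, using $\alpha$-H\"older continuity to convert differences of $F(u)$ or $G(u)$ into $\alpha$-powers of differences of $u$. Since $F(0)=G(0)=0$, the H\"older assumption also gives the pointwise bounds $|F(z)|\lesssim |z|^\alpha$ and $|G(z)|\lesssim |z|^\alpha$, which will take care of the $L^p$-part of the norms.

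For the first inequality, I would invoke Lemma \ref{lem_triebel} (i) with the ambient manifold $\T^n$, the smoothness index $\alpha s\in(0,1)$, and the integrability exponent $p_1/\alpha\in(1,\infty)$. The $L^{p_1/\alpha}$-part is bounded via $|F(u)|\lesssim |u|^\alpha$ by $\|u\|_{L^{p_1}}^\alpha$. For the Gagliardo-type double integral, $\alpha$-H\"older continuity gives pointwise $|F(u(y_1))-F(u(y_2))|\lesssim |u(y_1)-u(y_2)|^\alpha$, and raising to the power $p_1/\alpha$ turns the integrand into $(|u(y_1)-u(y_2)|/|y_1-y_2|^s)^{p_1}$, which is exactly the seminorm of $u$ in $B^{s}_{y,p_1,p_1}$. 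Taking $\alpha/p_1$-th roots then yields the desired estimate.

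For the second inequality, the argument is analogous but I would use the vector-valued statement Lemma \ref{lem_triebel} (ii) with $k=m$, Banach target $E:=L_y^{p_2/\alpha}$, smoothness $\alpha s\in(0,1)$, and integrability $p_1/\alpha$. The key pointwise step is upgraded to the $y$-integrated bound
\begin{align*}
\|G(u(x_1,\cdot))-G(u(x_2,\cdot))\|_{L_y^{p_2/\alpha}}
\lesssim \Bigl(\int_{\T^n}|u(x_1,y)-u(x_2,y)|^{p_2}\,dy\Bigr)^{\alpha/p_2}
=\|u(x_1,\cdot)-u(x_2,\cdot)\|_{L_y^{p_2}}^{\alpha},
\end{align*}
obtained by applying $\alpha$-H\"older continuity pointwise in $y$ and then raising to the power $p_2/\alpha$. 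Likewise $\|G(u(x,\cdot))\|_{L_y^{p_2/\alpha}}\lesssim \|u(x,\cdot)\|_{L_y^{p_2}}^\alpha$ handles the $L^{p_1/\alpha}_x E$-part. Plugging these two bounds into the difference characterization and simplifying the exponents exactly as in the scalar case completes the proof.

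The only place that requires some care is verifying that Lemma \ref{lem_triebel} (ii) applies with $E=L_y^{p_2/\alpha}$, but since $L_y^{p_2/\alpha}$ is a Banach space and the pointwise H\"older inequality upgrades to the $L^{p_2/\alpha}$-inequality by Minkowski/trivial monotonicity on $y$, no further technicality arises. Thus the main obstacle is essentially bookkeeping of exponents; the genuine analytic content is entirely supplied by Lemma \ref{lem_triebel}.
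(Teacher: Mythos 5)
Your argument is correct and follows essentially the same route as the paper: the first bound is exactly the scalar difference-characterization argument (which the paper simply quotes from \cite[Lem.~2.7]{KwakKwon}), and the second is proved, as in the paper, by applying Lemma \ref{lem_triebel}~(ii) with target $E=L_y^{p_2/\alpha}$ and upgrading the pointwise H\"older bound to $\|G(u(x_1,\cdot))-G(u(x_2,\cdot))\|_{L_y^{p_2/\alpha}}\leq\|u(x_1,\cdot)-u(x_2,\cdot)\|_{L_y^{p_2}}^{\alpha}$. Nothing is missing; the exponent bookkeeping matches the paper's computation.
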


\begin{proof}
The first estimate is proved in \cite[Lem. 2.7]{KwakKwon}. We follow the same lines of \cite[Lem. 2.7]{KwakKwon} to prove the second estimate. By Lemma \ref{lem_triebel} (ii) we have
\begin{align*}
\|G(u)\|^{p_1/\alpha}_{B_{x,p_1/\alpha,p_1/\alpha}^{\alpha s}L_y^{p_2/\alpha}}&\sim
\|G(u)\|^{p_1/\alpha}_{L_x^{p_1/\alpha}L_y^{p_2/\alpha}}\\
&\quad+
\int_{\R^k\times \R^k}\bg(\frac{\|(G(u))(z_1)-(G(u))(z_2)\|_{L_y^{p_2/\alpha}}}{|z_1-z_2|^s}\bg)^{p_1/\alpha}\,\frac{dz_1 dz_2}{|z_1-z_2|^{k}}\\
&=:I+II.
\end{align*}
Using the H\"older-continuity of $G$ and the condition $G(0)=0$ we infer that
$$ I=\|G(u)-G(0)\|^{p_1/\alpha}_{L_x^{p_1/\alpha}L_y^{p_2/\alpha}}\leq \|u\|_{L_x^{p_1}L_y^{p_2}}^{p_1}$$
and
\begin{align*}
II&\leq
\int_{\R^k\times \R^k}\bg(\frac{\||u(z_1)-u(z_2)|^\alpha\|_{L_y^{p_2/\alpha}}}{|z_1-z_2|^s}\bg)^{p_1/\alpha}\,\frac{dz_1 dz_2}{|z_1-z_2|^{k}}\\
&\leq
\int_{\R^k\times \R^k}\bg(\frac{\|u(z_1)-u(z_2)\|_{L_y^{p_2}}}{|z_1-z_2|^{s/\alpha}}\bg)^{p_1}\,\frac{dz_1 dz_2}{|z_1-z_2|^{k}},
\end{align*}
from which the claim follows.
\end{proof}

\begin{corollary}\label{cor 3.3}
Let $s\in(0,\infty)$, $p\in[1,\infty)$ and $q\in[1,\infty]$. Let also $E,F$ be Banach spaces with $E\hookrightarrow F$. Then $B_{p,q}^s (\R^k;E)\hookrightarrow B_{p,q}^s (\R^k;F)$.
\end{corollary}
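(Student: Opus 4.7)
The plan is to reduce the claim directly to the difference characterization provided by Lemma \ref{lem_triebel} (iii) and then exploit that the embedding $E\hookrightarrow F$ is, by definition, a pointwise norm inequality $\|v\|_F\leq C\|v\|_E$ for every $v\in E$. So first I would fix an arbitrary $u\in B^s_{p,q}(\R^k;E)$, write $s=[s]^- + \{s\}^+$ as in Lemma \ref{lem_triebel} (iii), and note that $u$ is in particular strongly measurable with values in $E$, hence also in $F$.

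Next I would control the Sobolev part of the norm. For each multi-index $\alpha$ with $|\alpha|\leq [s]^-$ the weak derivative $\partial^\alpha u$ lies in $L^p(\R^k;E)$, so by the pointwise bound $\|\partial^\alpha u(x)\|_F \leq C\|\partial^\alpha u(x)\|_E$ and integration in $x$ we get $\|u\|_{W^{[s]^-,p}(\R^k;F)} \lesssim \|u\|_{W^{[s]^-,p}(\R^k;E)}$. The same pointwise mechanism, applied after the difference operator $\Delta_h^2 D^\alpha$ (which commutes trivially with the embedding as it is linear and defined pointwise in $x$), gives
\[\|\Delta_h^2 D^\alpha u(x)\|_F \leq C\|\Delta_h^2 D^\alpha u(x)\|_E\]
for almost every $x$. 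Taking $L^p_x$-norms, then $L^q$ in $h$ with weight $|h|^{-\{s\}^+ q}\,dh/|h|^k$ (or the $\sup$ in $h$ when $q=\infty$), and summing over $|\alpha|=[s]^-$ yields $[u]_{s,p,q,F}\lesssim [u]_{s,p,q,E}$.

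Combining these two bounds with Lemma \ref{lem_triebel} (iii) applied once in $E$ and once in $F$ gives
\[\|u\|_{B^s_{p,q}(\R^k;F)} \sim \|u\|_{W^{[s]^-,p}(\R^k;F)} + [u]_{s,p,q,F}
\lesssim \|u\|_{W^{[s]^-,p}(\R^k;E)} + [u]_{s,p,q,E} \sim \|u\|_{B^s_{p,q}(\R^k;E)},\]
which is the desired continuous embedding. I do not expect any genuine obstacle: the only mildly delicate point is that Lemma \ref{lem_triebel} (iii) is stated for $p\in[1,\infty)$, which already covers our hypothesis, so the characterization is directly available in both the $E$- and $F$-valued settings, and the pointwise transfer argument requires no regularity of $u$ beyond membership in $B^s_{p,q}(\R^k;E)$.
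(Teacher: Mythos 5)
Your argument is correct and is exactly the route the paper takes: the paper proves Corollary \ref{cor 3.3} by invoking Lemma \ref{lem_triebel} (iii), and your proposal simply writes out the immediate details of transferring the pointwise inequality $\|v\|_F\lesssim\|v\|_E$ through the $W^{[s]^-,p}$ part and the difference seminorm. No gaps; nothing further is needed.
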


\begin{proof}
This follows immediately from Lemma \ref{lem_triebel} (iii).
\end{proof}

The following lemma gives the fundamental properties of the vector-valued Besov spaces.

\begin{lemma}[Embedding, duality and interpolation for Besov spaces, \cite{amannbesov,AmannEmbedding,NakamuraWada}]\label{lemallprop}
Let $E$, $E_1$, $E_2$ be Banach spaces and $B_{p,q}^s$ denote Besov-spaces defined on $\R^k$ with values in $E$. For given numbers $p_i,q_i,s_i$, $i=0,1$, define
$p_\theta^{-1}=(1-\theta)p_0^{-1}+\theta p_1^{-1}$, $q_\theta^{-1}=(1-\theta)q_0^{-1}+\theta q_1^{-1}$ and $s_\theta=(1-\theta)s_0+\theta s_1$. Then the following statements hold true:
\begin{itemize}
\item[(i)] For $m\in\Z$ and $p\in[1,\infty)$ we have $B_{p,1}^m E\hookrightarrow W^{m,p} E\hookrightarrow B_{p,\infty}^m E$.

\item[(ii)] For $1\leq p_1<p_2<\infty$ and $q\in[1,\infty]$ we have $B_{p_1,q}^{k(\frac{1}{p_1}-\frac{1}{p_2})} E\hookrightarrow L^{p_2,q} E$.

\item[(iii)] For $1\leq p_1<p_2<\infty$ and $M\in 2^{\N_0}$ we have $\|P_M f\|_{L^{p_2} E}\lesssim M^{k(\frac{1}{p_1}-\frac{1}{p_2})}\|P_M f\|_{L^{p_1} E}$.
\item[(iv)] For $1\leq p_1<p_2<\infty$, $q\in[1,\infty]$ and $s\in\R$ we have $B_{p_1,q}^{s+k(\frac{1}{p_1}-\frac{1}{p_2})} E\hookrightarrow B_{p_2,q}^{s} E$.

\item[(v)] For $p,q\in[1,\infty)$, $s\in\R$, and either $E$ being reflexive or $E'$ being separable\footnote{A Banach space satisfying such property is referred to as a Banach space satisfying the \textit{Radon-Nikodym property} in literature.}, we have $(B^s_{p,q}E)'=B^{-s}_{p',q'}E'$.

\item[(vi)] For $p\in[1,\infty)$, $q_0,q_1,\eta\in[1,\infty]$, $\theta\in(0,1)$ and $s_0,s_1\in\R$ with $s_0\neq s_1$ we have the real interpolation $(B_{p,q_0}^{s_0} E, B_{p,q_1}^{s_1} E)_{\theta,\eta}=B_{p,\eta}^{s_\theta}E$.
\item[(vii)] For $p_0,p_1\in(1,\infty)$, $q_0,q_1\in[1,\infty]$, $s_0,s_1\in\R$ and an interpolation couple $(E_0,E_1)$ we have the complex interpolation
$[B_{p,q_0}^{s_0} E_0, B_{p,q_1}^{s_1} E_1]_{\theta}=B_{p_\theta,q_\theta}^{s_\theta}[E_0,E_1]_\theta$.
\end{itemize}
\end{lemma}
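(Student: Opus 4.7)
The plan is to match each of (i)--(vii) to existing results in the vector-valued Besov space literature, performing minor verifications where the $E$-valued extension requires explicit argument. The organizing principle will be the standard retraction/coretraction scheme: the map $f\mapsto (P_N^z f)_{N\in 2^{\N}}$ realizes $B^s_{p,q}E$ as a retract of the weighted sequence space $\ell^{s,q}_N(L^p(\R^k;E))$, so any functional-analytic property (duality, real and complex interpolation) of the sequence space transfers directly to the Besov space via the standard lifting lemma.

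With this in mind, items (i), (v), (vi) and (vii) are essentially contained in Amann's monograph \cite{amannbesov} (see also \cite{AmannEmbedding} and \cite{NakamuraWada}). For (v), the Radon-Nikodym hypothesis on $E$ is exactly what makes $L^{p'}(\R^k;E)'\cong L^p(\R^k;E')$, and the retraction then transfers the sequence-space duality $\ell^{q'}_N(L^{p'}E')\cong(\ell^q_N(L^p E))'$ to the Besov scale. For (vi) and (vii), real and complex interpolation of $\ell^{s,q}_N(L^p(E))$ is classical in the scalar case and extends to the Banach-valued case by \cite{NakamuraWada}, after which the retraction yields the displayed identities.

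For (iii), I would prove the $E$-valued Bernstein inequality directly: write $P_M f=K_M * f$ with $K_M$ a Schwartz kernel whose Fourier transform localizes to an annulus of radius $\sim M$, and apply the vector-valued Young inequality $\|K_M * f\|_{L^{p_2}E}\leq \|K_M\|_{L^r}\|f\|_{L^{p_1}E}$ with $1+p_2^{-1}=r^{-1}+p_1^{-1}$; scaling then yields $\|K_M\|_{L^r}\sim M^{k(1/p_1-1/p_2)}$, which is the claim. Item (iv) is immediate by inserting this pointwise-in-$M$ bound into the definition of the Besov norm. For (ii) I would combine (iv) with the real interpolation identity (vi): realize $B^{k(1/p_1-1/p_2)}_{p_1,q}E$ as a real interpolation space between two endpoint Besov spaces, each of which embeds into a Lebesgue target via the Bernstein bound summed over the dyadic decomposition, and conclude by real-interpolation functoriality on the Lorentz scale.

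The hard part will be justifying the vector-valued complex interpolation identity in (vii) for a general interpolation couple $(E_0,E_1)$. Via the retraction, this reduces to the nontrivial sequence-space identity $[\ell^{s_0,q_0}_N(L^{p_0}(E_0)),\ell^{s_1,q_1}_N(L^{p_1}(E_1))]_\theta=\ell^{s_\theta,q_\theta}_N(L^{p_\theta}([E_0,E_1]_\theta))$, which is precisely the Nakamura--Wada theorem \cite{NakamuraWada} under the hypothesis $p_0,p_1\in(1,\infty)$. With this granted, the retraction/coretraction argument closes the proof of (vii).
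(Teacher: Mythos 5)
The paper offers no proof of this lemma at all—it is stated as a citation to \cite{amannbesov,AmannEmbedding,NakamuraWada}—and your proposal locates the items in exactly those sources while correctly filling in the routine parts: the retraction/coretraction transfer onto $\ell^{s,q}_N(L^p E)$ for (v)--(vii), vector-valued Young/Bernstein for (iii)--(iv), and the two-endpoint real-interpolation argument for the Lorentz embedding (ii). This is consistent with (indeed more detailed than) the paper's treatment, and I see no gap.
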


\subsection{Some tools from fractional calculus}\label{sec 3.2}
We firstly record a useful frequency-localized Besov-H\"older type inequality introduced in \cite[Prop. 2.3]{KwakKwon}.

\begin{lemma}[\cite{KwakKwon}]\label{kwak_lem_lhh}
Let $E_i$, $i\in\{1,2,3\}$, be Banach spaces defined on a measure space $\mathcal{M}$ and assume the inequality
\begin{align}\label{condEi}
\bg|\int_{\mathcal{M}} f_1f_2f_3\,dm\bg|\lesssim \|f_1\|_{E_1}\|f_2\|_{E_2} \|f_3\|_{E_3}
\end{align}
holds. Let also $s_j\in\R$, $p_j\in(1,\infty)$, $q_j\in[1,\infty]$, $j\in\{1,2,3\}$, satisfy
\begin{align*}
&s_1+s_2+s_3>0,\quad s_2+s_3>0,\quad \frac{k}{p_1}>s_1,\\
&\frac{1}{p_1}+\frac{1}{p_2}+\frac{1}{p_3}=1+\frac{s_1+s_2+s_3}{k},\quad \frac{1}{q_1}+\frac{1}{q_2}+\frac{1}{q_3}=1
\end{align*}
with $k\in\N$. Then
\begin{align*}
\sum_{\substack{L,M,N\in 2^{\N_0}\\
L\lesssim M\lesssim N}}
\bg|\int_{\mathcal{M}\times\mathcal{D}^k } P_L f P_M g P_N h\,dm dz\bg|\lesssim
\|f\|_{B_{z,p_1,q_1}^{s_1}E_1}
\|g\|_{B_{z,p_2,q_2}^{s_2}E_2}
\|h\|_{B_{z,p_3,q_3}^{s_3}E_3},
\end{align*}
where $\mathcal{D}\in\{\R,\T\}$.
\end{lemma}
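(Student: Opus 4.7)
The plan is to combine Fourier orthogonality on $\mathcal{D}^k$, the pointwise hypothesis \eqref{condEi}, Bernstein's inequality (Lemma \ref{lemallprop}(iii)), and a bilinear Young-type argument on the dyadic scale.

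First, I would invoke Fourier orthogonality in $z$: since the Littlewood--Paley projectors $P_L^z, P_M^z, P_N^z$ localize $z$-frequencies to dyadic shells at scales $L, M, N$, the $z$-Fourier support of the scalar function $z \mapsto \int_{\mathcal{M}} (P_L f)(P_M g)(P_N h)\, dm$ lies in the Minkowski sum of those shells. Integrating over $\mathcal{D}^k$ extracts the $\xi = 0$ component, which by the triangle inequality vanishes unless $N \lesssim L + M$; combined with $L \lesssim M \lesssim N$, this forces $N \sim M$. The triple sum thus effectively reduces to $\sum_{L \leq M \sim N}$ with a uniformly bounded number of $N$ per $M$. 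For each surviving triple, applying \eqref{condEi} pointwise in $z$ and integrating yields
\[
\Bigl|\int_{\mathcal{M} \times \mathcal{D}^k} (P_L f)(P_M g)(P_N h)\, dm\, dz\Bigr| \leq \int_{\mathcal{D}^k} \|P_L f\|_{E_1} \|P_M g\|_{E_2} \|P_N h\|_{E_3}\, dz.
\]

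Next I would apply Hölder in $z$ with exponents $\tilde p_j \in [p_j, \infty]$ satisfying $\tilde p_1^{-1} + \tilde p_2^{-1} + \tilde p_3^{-1} = 1$, followed by Bernstein's inequality to convert each $L^{\tilde p_j}$ norm to an $L^{p_j}$ norm with a frequency weight: $\|P_L f\|_{L^{\tilde p_1} E_1} \lesssim L^{\beta_1} \|P_L f\|_{L^{p_1} E_1}$, with $\beta_j := k(p_j^{-1} - \tilde p_j^{-1}) \geq 0$. The hypothesis $\sum p_j^{-1} = 1 + k^{-1}(s_1 + s_2 + s_3)$ forces $\beta_1 + \beta_2 + \beta_3 = s_1 + s_2 + s_3$. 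I would then choose $\beta_1 \in (s_1, k/p_1]$, non-empty thanks to $k/p_1 > s_1$, and distribute the residual mass $s_1 + s_2 + s_3 - \beta_1 \in [0, s_2 + s_3)$ between $\beta_2, \beta_3 \geq 0$ with $\beta_j \leq k/p_j$; feasibility of this distribution uses $s_2 + s_3 > 0$ to guarantee a non-empty admissible window.

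After pulling out the Besov weights $L^{s_1} M^{s_2} N^{s_3}$ and invoking $N \sim M$ together with $\sum \beta_j = \sum s_j$, the residual kernel simplifies to $(L/M)^{\beta_1 - s_1}$ up to harmless constants. Set $\tilde a_L := L^{s_1} \|P_L f\|_{L^{p_1} E_1}$, $\tilde b_M := M^{s_2} \|P_M g\|_{L^{p_2} E_2}$, and $\tilde c_M := M^{s_3} \sup_{N \sim M} \|P_N h\|_{L^{p_3} E_3}$, so that $\tilde a \in \ell^{q_1}$, $\tilde b \in \ell^{q_2}$, $\tilde c \in \ell^{q_3}$ with norms controlled by the respective Besov norms of $f, g, h$. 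Hölder on sequences with $q_1'^{-1} = q_2^{-1} + q_3^{-1}$ (from $\sum q_j^{-1} = 1$) gives $\tilde b \tilde c \in \ell^{q_1'}$. The remaining bilinear sum $\sum_{L \leq M} (L/M)^{\beta_1 - s_1} \tilde a_L (\tilde b_M \tilde c_M)$ is a convolution on the multiplicative dyadic group against the $\ell^1$ kernel $r^{\beta_1 - s_1} \chi_{r \leq 1}$ (valid since $\beta_1 > s_1$); Young's inequality paired with Hölder on $(\ell^{q_1}, \ell^{q_1'})$ then yields the desired Besov triple-norm bound.

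The main obstacle I anticipate is verifying joint feasibility of the Bernstein parameters $(\beta_1, \beta_2, \beta_3)$ under the constraints $\beta_j \in [0, k/p_j]$, $\sum \beta_j = \sum s_j$, and $\beta_1 > s_1$: this is precisely where both hypotheses $k/p_1 > s_1$ and $s_2 + s_3 > 0$ enter the argument. A secondary subtlety is transferring Fourier orthogonality from the scalar to the Banach-valued setting, which follows immediately since $P_L^z$ acts as $P_L^z \otimes \mathrm{id}_{E_i}$ and the trilinear pairing underlying \eqref{condEi} is $z$-independent.
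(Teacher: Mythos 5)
This lemma is not proved in the paper at all --- it is imported verbatim from \cite[Prop.~2.3]{KwakKwon} --- so there is no in-paper argument to compare against; judged on its own terms, your proof is correct and is the argument one would expect. The three steps all hold up. First, the Fourier-support reduction to $M\sim N$ is both valid (for fixed $m\in\mathcal{M}$ the $z$-Fourier support of $P_Lf\,P_Mg$ lies in a ball of radius $\lesssim M$, so the $z$-integral against $P_Nh$ vanishes for $N\gg M$, and this survives the $m$-integration) and genuinely necessary, since the hypotheses allow $s_3<0$, in which case the sum over $N\gg M$ could not be closed by Bernstein alone. Second, the exponent bookkeeping is feasible exactly as you anticipate: the admissible window for $\beta_1$ is $(s_1,\min\{k/p_1,\,s_1+s_2+s_3\}]$, nonempty because $k/p_1>s_1$ and because $s_2+s_3>0$ gives $s_1+s_2+s_3>s_1$; and the residual $s_1+s_2+s_3-\beta_1$ always fits below $k/p_2+k/p_3$, since the scaling relation $\frac{1}{p_1}+\frac{1}{p_2}+\frac{1}{p_3}=1+\frac{s_1+s_2+s_3}{k}$ together with $p_1>1$ yields $k/p_2+k/p_3=s_1+s_2+s_3+k(1-\frac{1}{p_1})>s_1+s_2+s_3$. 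One can moreover choose all $\beta_j$ strictly below $k/p_j$, so every $\tilde p_j$ is finite and the vector-valued Bernstein estimate of Lemma \ref{lemallprop} (iii) applies literally, avoiding any $L^\infty$-endpoint issue. Third, with $\beta_1>s_1$ the kernel $(L/M)^{\beta_1-s_1}\chi_{L\leq M}$ is summable, and the Schur/Young step combined with H\"older in $\ell^{q_1}\times\ell^{q_1'}$ (using $\frac{1}{q_2}+\frac{1}{q_3}=\frac{1}{q_1'}$) closes the estimate, with the finitely many $N\sim M$ absorbed into $\tilde c_M$ as you indicate. This is, in substance, the route taken in \cite{KwakKwon}.
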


By duality, Lemma \ref{kwak_lem_lhh} yields immediately the following corollary.
\begin{corollary}\label{cor 3.6}
Let Banach spaces $E_i$, $i\in\{1,2,3\}$ satisfy \eqref{condEi}. Let also $s_j\in\R$, $p_j\in(1,\infty)$, $q_j\in[1,\infty]$, $j\in\{1,2,3\}$, satisfy
\begin{align*}
&s_1+s_2>s_3,\quad \min\{s_1,s_2\}>s_3,\\
&\frac{k}{p_1}>s_1,\quad \frac{k}{p_2}>s_2,\quad \frac{k}{p_3}<k+s_3\\
&\frac{1}{p_3}-\frac{s_3}{k}=\frac{1}{p_1}-\frac{s_1}{k}+\frac{1}{p_2}-\frac{s_2}{k},\quad
\frac{1}{q_3}=\frac{1}{q_1}+\frac{1}{q_2}
\end{align*}
with $k\in\N$. Then
\begin{align*}
\|uv\|_{B^{s_3}_{z,p_3,q_3}E_3'}\lesssim \|u\|_{B_{z,p_1,q_1}^{s_1}E_1}\|v\|_{B_{z,p_2,q_2}^{s_2}E_2}.
\end{align*}
\end{corollary}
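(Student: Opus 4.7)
The plan is to reduce the claimed bilinear Besov estimate to the trilinear bound of Lemma \ref{kwak_lem_lhh} by a duality-plus-frequency-partition argument. Assuming that $E_3$ satisfies the Radon–Nikodým property so that Lemma \ref{lemallprop}(v) applies, I would start by dualising:
\begin{align*}
\|uv\|_{B^{s_3}_{z,p_3,q_3}E_3'}\;\sim\;\sup_{\|w\|_{B^{-s_3}_{z,p_3',q_3'}E_3}\leq 1}\,\Big|\int_{\mathcal{M}\times\mathcal{D}^k} u\,v\,w\,dm\,dz\Big|,
\end{align*}
where the trilinear integral is interpreted via the pairing guaranteed by hypothesis \eqref{condEi}. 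This reduces matters to a trilinear estimate with $u,v,w$ carrying the Besov parameters $(s_1,p_1,q_1)$, $(s_2,p_2,q_2)$, $(-s_3,p_3',q_3')$ respectively.

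Next I would LP-decompose $u=\sum_L P_L u$, $v=\sum_M P_M v$, $w=\sum_N P_N w$ and note that by Fourier-support considerations only triples $(L,M,N)$ whose two largest entries are comparable contribute. The triple sum then splits into $O(1)$ many ordered sub-sums indexed by which of $u,v,w$ carries the lowest frequency and which carries the middle. In each sub-sum one relabels the low/middle/high-frequency factors as $f,g,h$ in Lemma \ref{kwak_lem_lhh} and applies that lemma. The corollary's scaling identity rearranges into $\tfrac{1}{p_1}+\tfrac{1}{p_2}+\tfrac{1}{p_3'}=1+\tfrac{s_1+s_2-s_3}{k}$, and $1/q_3=1/q_1+1/q_2$ becomes $\tfrac{1}{q_1}+\tfrac{1}{q_2}+\tfrac{1}{q_3'}=1$, matching the lemma's exponent hypotheses. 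The hypothesis $s_1+s_2>s_3$ supplies the total-regularity positivity $s_1+s_2-s_3>0$; the hypotheses $k/p_1>s_1$, $k/p_2>s_2$, $k/p_3<k+s_3$ (the last equivalent to $k/p_3'>-s_3$) cover the Sobolev-type condition on whichever factor is relabelled as $f$; and $\min\{s_1,s_2\}>s_3$ supplies the middle-plus-high positivity $s_g+s_h>0$ in those relabellings where $u$ or $v$ carries the lowest frequency.

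Summing the finitely many sub-sum bounds and taking the supremum over $w$ in the unit ball of the dual Besov space would then yield the desired estimate. The delicate point I expect in Step~3 is the sub-case where the test function $w$ carries the lowest frequency: there the required middle-plus-high positivity becomes $s_1+s_2>0$ rather than one of $s_i>s_3$, and one has to combine $s_1+s_2>s_3$ with $\min\{s_1,s_2\}>s_3$ (and, if needed, the scaling constraint) to handle it. This piece of case-by-case bookkeeping is the main obstacle; once it is dispatched the assembly of the bound is routine.
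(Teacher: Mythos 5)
Your proposal is correct and follows essentially the same route as the paper, which offers no further detail than ``by duality, Lemma~\ref{kwak_lem_lhh} yields the corollary'': one pairs $uv$ against $w$ in the unit ball of $B^{-s_3}_{z,p_3',q_3'}E_3$ (only the isometric embedding $L^{p_3}E_3'\hookrightarrow (L^{p_3'}E_3)'$ is needed, so the Radon--Nikod\'ym caveat is harmless), decomposes dyadically, and applies Lemma~\ref{kwak_lem_lhh} to the three sub-sums indexed by which factor carries the lowest frequency, with exactly the exponent translations you record. The one delicate point you flag is real: in the sub-case where $w$ is lowest the lemma requires $s_1+s_2>0$, which the corollary's listed hypotheses do not literally imply when $s_3<0$; this is a bookkeeping omission in the statement rather than a flaw in your argument, since in every application in the paper the regularities $s_1,s_2$ are positive.
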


The following lemma is a vector-valued generalization of \cite[Lem. 2.8]{KwakKwon}. The modification is straightforward, we omit the details.

\begin{lemma}[\cite{KwakKwon}]\label{lem 3.6}
Let $s_t,s_x>0$ be exponents satisfying $2s_t+s_x<1$. Fix $p_{tx},p_y\in(1,\infty)$, $\alpha\in(0,1)$ and $F\in C^{0,\alpha}(\C)$ with $F(0)=0$. Then
\begin{align}
\|F(u)\|_{B^{s_t\alpha}_{t,p_{tx}/\alpha,p_{tx}/\alpha}B^{s_x\alpha}_{x,p_{tx}/\alpha,p_{tx}/\alpha}
L_y^{p_y/\alpha}}\lesssim
\|u\|^{\alpha}_{L_t^{p_{tx}} B_{x,p_{tx},p_{tx}}^{2s_t+s_x} L_y^{p_y}\cap
B_{t,p_{tx},p_{tx}}^{s_t+s_x/2} L_x^{p_{tx}} L_y^{p_y}}.
\end{align}
\end{lemma}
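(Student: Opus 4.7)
The plan is to iterate the difference characterization of vector-valued Besov norms (Lemma~\ref{lem_triebel}(ii)) in $t$ and $x$, combine it with the pointwise Hölder bound $|F(a)-F(b)|\le C|a-b|^\alpha$, and then split the resulting four-fold integral according to the parabolic comparison $|t_1-t_2|$ vs.\ $|x_1-x_2|^2$, exactly along the lines of the scalar argument \cite[Lem.~2.8]{KwakKwon}. Set $p:=p_{tx}/\alpha$ and $E:=B^{s_x\alpha}_{x,p,p}L_y^{p_y/\alpha}$. Lemma~\ref{lem_triebel}(ii) applied in the temporal variable with target $E$ reduces the LHS to controlling
\begin{align*}
\|F(u)\|_{L^p_t E}^p+\iint_{\R^2}\frac{\|F(u)(t_1)-F(u)(t_2)\|_E^p}{|t_1-t_2|^{1+s_t\alpha p}}\,dt_1\,dt_2.
\end{align*}
For the first summand, Corollary~\ref{cor 3.2} (applied pointwise in $t$) gives $\|F(u)(t)\|_E\lesssim\|u(t)\|^\alpha_{B^{s_x}_{x,p_{tx},p_{tx}}L^{p_y}_y}$, which after taking $L^p_t$-norm is absorbed into $\|u\|^\alpha_{L^{p_{tx}}_tB^{2s_t+s_x}_{x,p_{tx},p_{tx}}L^{p_y}_y}$ via the embedding $B^{2s_t+s_x}\hookrightarrow B^{s_x}$ of Lemma~\ref{lemallprop}(iv).

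For the temporal double integral, a second application of Lemma~\ref{lem_triebel}(ii) to the inner $E$-norm produces a \emph{single-difference} piece $\|F(u)(t_1)-F(u)(t_2)\|^p_{L^p_xL^{p_y/\alpha}_y}$ and a \emph{cross-difference} piece involving
\begin{align*}
D:=F(u(t_1,x_1))-F(u(t_2,x_1))-F(u(t_1,x_2))+F(u(t_2,x_2))
\end{align*}
weighted by $|x_1-x_2|^{-m-s_x\alpha p}$. The single-difference piece is handled pointwise in $(x,y)$ by the Hölder bound together with the identity $\||f|^\alpha\|_{L^{q/\alpha}}=\|f\|^\alpha_{L^q}$, reducing it to the $B^{s_t}_tL^{p_{tx}}_xL^{p_y}_y$ seminorm of $u$, which is dominated by the $B^{s_t+s_x/2}_{t,p_{tx},p_{tx}}L^{p_{tx}}_xL^{p_y}_y$ part of the RHS intersection.

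The cross-term $D$ is the crux. One splits the integration domain into the regime $|t_1-t_2|\ge|x_1-x_2|^2$, where one writes $D$ as a sum of two pure $x$-differences $F(u(t_i,x_1))-F(u(t_i,x_2))$, and the dual regime $|t_1-t_2|\le|x_1-x_2|^2$, where one writes $D$ as a sum of two pure $t$-differences $F(u(t_1,x_i))-F(u(t_2,x_i))$. In the first regime, the Hölder bound fiberwise in $y$ reduces the $L^{p_y/\alpha}_y$-norm of each summand to $\|u(t_i,x_1)-u(t_i,x_2)\|^\alpha_{L^{p_y}_y}$, and integrating $|t_1-t_2|^{-1-s_t\alpha p}$ over $|t_1-t_2|\ge|x_1-x_2|^2$ produces a factor $|x_1-x_2|^{-2s_t\alpha p}$; combined with the existing $|x_1-x_2|^{-m-s_x\alpha p}$ and the identity $\alpha p=p_{tx}$, this yields exactly the spatial Besov weight $|x_1-x_2|^{-m-(2s_t+s_x)p_{tx}}$ that characterizes $\|u\|^{p_{tx}}_{L^{p_{tx}}_tB^{2s_t+s_x}_{x,p_{tx},p_{tx}}L^{p_y}_y}$. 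In the second regime, integrating $|x_1-x_2|^{-m-s_x\alpha p}$ over $|x_1-x_2|\ge|t_1-t_2|^{1/2}$ produces $|t_1-t_2|^{-s_x\alpha p/2}$ which combines with $|t_1-t_2|^{-1-s_t\alpha p}$ to give the temporal Besov weight $|t_1-t_2|^{-1-(s_t+s_x/2)p_{tx}}$, reproducing the $B^{s_t+s_x/2}_tL^{p_{tx}}_xL^{p_y}_y$ piece of the RHS. The hypothesis $2s_t+s_x<1$ is precisely what keeps all Besov indices in $(0,1)$ so that Lemma~\ref{lem_triebel}(ii) applies at every stage and all radial integrations converge. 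The main obstacle is matching these four exponents to reproduce exactly the two pieces of the intersection on the RHS; the vector-valued $L^{p_y/\alpha}_y$ factor is passive throughout, since the pointwise Hölder bound on $F$ commutes with $y$-integration, which is exactly why the modification from the scalar framework of \cite{KwakKwon} is indeed straightforward.
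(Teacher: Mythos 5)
Your argument is correct and follows essentially the same route the paper intends: it is the parabolic space--time splitting ($|t_1-t_2|$ versus $|x_1-x_2|^2$) of the iterated difference characterization from Lemma~\ref{lem_triebel}(ii), i.e.\ exactly the scalar proof of \cite[Lem.~2.8]{KwakKwon} with the $L_y^{p_y/\alpha}$ fiber carried passively through the pointwise H\"older bound, which is the ``straightforward modification'' the paper omits. The only cosmetic slip is attributing the embedding $B^{2s_t+s_x}_{x,p_{tx},p_{tx}}L_y^{p_y}\hookrightarrow B^{s_x}_{x,p_{tx},p_{tx}}L_y^{p_y}$ to Lemma~\ref{lemallprop}(iv) (which concerns a change of integrability); it in fact follows directly from monotonicity of the dyadic weights in the definition of the Besov norm.
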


While with the help of Lemma \ref{lem 3.6} we are able to deal with derivatives taken along the $x$-direction, the proof of Lemma \ref{lem 3.6} can not be employed to handle the $y$-derivatives since the exponents $p_{tx}$ and $p_y$ are in general different from each other. To that end, we give the following key lemma for estimating derivatives involving periodic directions.

\begin{lemma}\label{lem 3.7}
Given $s_t,s_y>0$ let $p_{t}, p_y,p\in(1,\infty)$ satisfy $\min\{\frac{2s_t p_{t}}{2s_t+s_y},
\frac{s_y p_{y}}{2s_t+s_y}\}\geq 1$. Then
\begin{align*}
\|u\|_{B^{s_t}_{t, p_{t},p_{t}}L_x^{p} B^{s_y}_{t, p_{y},p_{y}}}
\lesssim
\|u\|_{L_{t}^{p_{t}}L_x^p B_{y,p_y,\frac{s_yp_y}{2s_t+s_y}}^{2s_t+s_y}\cap
B_{t,p_{t},\frac{2s_tp_{t}}{2s_t+s_y}}^{s_t+s_y/2} L_x^{p} L_y^{p_y}}.
\end{align*}
\end{lemma}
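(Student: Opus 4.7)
The plan is to follow a parabolic-scaling interpolation scheme that mirrors Lemma \ref{lem 3.6}, but now with the regularity falling on the periodic direction $y$ instead of the Euclidean direction $x$. The algebraic heart of the argument is the scaling identity
\[
 N^{s_t} M^{s_y} = \bigl(N^{s_t + s_y/2}\bigr)^{1-\theta} \bigl(M^{2s_t + s_y}\bigr)^{\theta}, \qquad \theta := \tfrac{s_y}{2s_t + s_y},
\]
which embodies the parabolic scaling that one $t$-derivative is worth two $y$-derivatives. The exponents appearing in the hypothesis, $q_t := (1-\theta) p_t = \tfrac{2s_t p_t}{2s_t + s_y}$ and $q_y := \theta p_y = \tfrac{s_y p_y}{2s_t + s_y}$, are exactly the ones this identity dictates, and the condition $\min\{q_t, q_y\} \geq 1$ is precisely what guarantees that the corresponding Besov spaces on the RHS are admissible (summability exponent $\geq 1$).

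First I would unfold the target norm via its Littlewood--Paley definition. With $u_{N, M} := P^t_N P^y_M u$,
\[
 \|u\|^{p_t}_{B^{s_t}_{t, p_t, p_t} L^p_x B^{s_y}_{y, p_y, p_y}}
 \;\sim\; \sum_N N^{s_t p_t} \Bigl\| \Bigl(\sum_M M^{s_y p_y} \|u_{N, M}\|^{p_y}_{L^{p_y}_y}\Bigr)^{1/p_y}\Bigr\|^{p_t}_{L^{p_t}_t L^p_x}.
\]
Next I would split the joint dyadic index set into the parabolic regions $\mathrm{I} := \{M^2 \leq N\}$ and $\mathrm{II} := \{M^2 > N\}$, and use the trivial bounds $N^{s_t} M^{s_y} \leq N^{s_t + s_y/2}$ in $\mathrm I$ and $N^{s_t} M^{s_y} \leq M^{2s_t + s_y}$ in $\mathrm{II}$. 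In $\mathrm I$, the $M$-summation collapses via the $y$-Littlewood--Paley square-function theorem combined with Minkowski's inequality in the outer $L^{p_t}_t L^p_x$ norm, producing
\[
 \mathrm{(I)} \;\lesssim\; \sum_N N^{(s_t + s_y/2)\, p_t} \|P^t_N u\|^{p_t}_{L^{p_t}_t L^p_x L^{p_y}_y};
\]
the embedding $\ell^{q_t}_N \hookrightarrow \ell^{p_t}_N$ (valid since $q_t \leq p_t$) then upgrades this to $\mathrm{(I)}^{1/p_t} \lesssim \|u\|_{B^{s_t + s_y/2}_{t, p_t, q_t} L^p_x L^{p_y}_y}$. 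Region $\mathrm{II}$ is handled symmetrically, by pulling the $N$-LP sum outside with Minkowski and the $t$-square-function theorem and invoking $\ell^{q_y}_M \hookrightarrow \ell^{p_y}_M$, and is bounded by $\|u\|_{L^{p_t}_t L^p_x B^{2s_t + s_y}_{y, p_y, q_y}}$.

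The hard part will be the square-function / Minkowski swaps: the correct direction of Minkowski's inequality to interchange the $\ell^{p_y}_M$-sum with the mixed Lebesgue norm $L^{p_t}_t L^p_x$ (and analogously for $\ell^{p_t}_N$) depends on the relative positions of $p, p_t, p_y$ with respect to $2$. In the favourable sub-cases $p_y \geq 2$ and $p_t \geq 2$, the inclusion $\ell^{p_y} \hookrightarrow \ell^2$ together with the Littlewood--Paley square-function theorem collapses the frequency-localized sum cleanly. In the unfavourable sub-cases, the naive bound fails; one then shrinks the dyadic threshold slightly, say to $M \lesssim N^{1/2 - \varepsilon}$, absorbing the boundary region $M \sim \sqrt N$ through a pigeonholing / $\varepsilon$-loss argument, or one invokes a vector-valued Littlewood--Paley theorem (which is applicable in the UMD target spaces $L^{p}_x L^{p_y}_y$ for $p, p_y \in (1,\infty)$). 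Once the swaps go through, summing the contributions of $\mathrm{I}$ and $\mathrm{II}$ completes the proof.
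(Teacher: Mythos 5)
Your scaling identity, the identification of $q_t:=\tfrac{2s_tp_t}{2s_t+s_y}$, $q_y:=\tfrac{s_yp_y}{2s_t+s_y}$, and of the role of the hypothesis $\min\{q_t,q_y\}\geq 1$ are all correct, and Region I does close when $p_y\geq 2$. The gap is in Region II (and in Region I when $p_y<2$), which is exactly where the content of the lemma sits. After the trivial bound $N^{s_t}M^{s_y}\leq M^{2s_t+s_y}$, the constraint $M^2>N$ carries no quantitative information any more, and what you must then prove is $\sum_N\|P^t_N u\|^{p_t}_{L^{p_t}_tL^p_xB^{2s_t+s_y}_{y,p_y,p_y}}\lesssim\|u\|^{p_t}_{L^{p_t}_tL^p_xB^{2s_t+s_y}_{y,p_y,p_y}}$, i.e.\ the embedding $L^{p_t}_t(E)\hookrightarrow B^0_{t,p_t,p_t}(E)$. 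This is false for $p_t<2$ already for scalar-valued product functions $f(t)g(x,y)$ (only $L^{p}\hookrightarrow B^0_{p,\max(p,2)}$ holds; in the vector-valued generality of Lemma \ref{lemallprop} (i) one only has $L^pE\hookrightarrow B^0_{p,\infty}E$), and for $p_t\geq 2$ it is not a consequence of UMD either: UMD-valued Littlewood--Paley theory gives randomized/$\ell^2$-type square functions, which convert into the needed $\ell^{p_t}_N$-summation of blocks measured in $L^{p_t}_tL^p_xB^{\cdot}_{y,p_y,p_y}$ only after Minkowski swaps that additionally require $p,p_y\geq 2$. The same structural failure appears in Region I when $p_y<2$, since $(\sum_M\|P^y_Mv\|^{p_y}_{L^{p_y}_y})^{1/p_y}\lesssim\|v\|_{L^{p_y}_y}$ is false there. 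Note that $\min\{q_t,q_y\}\geq 1$ does allow $p_t<2$ or $p_y<2$ (e.g.\ $s_t\gg s_y$ with $p_t$ near $1$, forcing $p_y\gg 2$, or vice versa), so your favourable sub-cases do not exhaust the statement; moreover the two regions are not symmetric: in Region I the target keeps the $t$-Besov weights with summability $q_t\leq p_t$, so the trivial bound plus $\ell^{q_t}_N\hookrightarrow\ell^{p_t}_N$ can close, whereas in Region II the target $L^{p_t}_tL^p_xB^{2s_t+s_y}_{y,p_y,q_y}$ has no $t$-Besov structure at all, so convergence of the $N$-sum must come from the off-diagonal factor $(N/M^2)^{s_t}$ that the trivial bound discards. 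Your fallback fixes do not repair this: shrinking the threshold to $M\lesssim N^{1/2-\varepsilon}$ produces an $\varepsilon$-loss over the sharp exponents $s_t+s_y/2$ and $2s_t+s_y$ of the statement and does not touch the $p_y<2$ obstruction, and the vector-valued Littlewood--Paley theorem gives the wrong ($\ell^2$) summability.

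A repaired version of your scheme would retain the geometric factors $(M/\sqrt N)^{s_y}$, $(\sqrt N/M)^{2\delta}$ and run Schur/geometric-series arguments in $N$ and $M$, but the interchanges of $\ell^{p_y}_M$, $\ell^{p_t}_N$ with $L^{p_t}_tL^p_x$ then hinge on orderings of $p_t,p,p_y,q_t,q_y$ that the hypotheses do not fix, so this becomes a case analysis. The paper avoids all of this by implementing your scaling identity abstractly: apply the complex interpolation formula of Lemma \ref{lemallprop} (vii) to the couple $A_0=B^{0}_{t,p_t,\infty}L_x^pB^{2s_t+s_y}_{y,p_y,q_y}$ and $A_1=B^{s_t+s_y/2}_{t,p_t,q_t}L_x^pB^{0}_{y,p_y,\infty}$ at $\theta=\tfrac{2s_t}{2s_t+s_y}$, use $A_0\cap A_1\hookrightarrow[A_0,A_1]_\theta=B^{s_t}_{t,p_t,p_t}L_x^pB^{s_y}_{y,p_y,p_y}$, and then place the two right-hand spaces of the lemma into $A_0\cap A_1$ via $L^r\hookrightarrow B^0_{r,\infty}$ (Lemma \ref{lemallprop} (i)) and Corollary \ref{cor 3.3}. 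I recommend recasting your proof along these lines; no square-function theorem or Minkowski interchange is then needed, and the hypothesis $\min\{q_t,q_y\}\geq 1$ enters only to ensure the endpoint Besov spaces are legitimate, exactly as you observed.
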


\begin{proof}
Using Lemma \ref{lemallprop} (vii) we know that
\begin{align*}
B_{t,p_{t},\infty}^0 L_x^{p} B_{y,p_y,\frac{s_yp_y}{2s_t+s_y}}^{2s_t+s_y}
\cap B_{t,p_{t},\frac{2s_tp_{t}}{2s_t+s_y}}^{s_t+s_y/2} L_x^{p} B_{y,p_y,\infty}^{0}
\hookrightarrow
B^{s_t}_{t, p_{t},p_{t}}L_x^{p} B^{s_y}_{t, p_{y},p_{y}}.
\end{align*}
The desired claim follows from combining the embedding $L^p\hookrightarrow B_{p,\infty}^0$ given by Lemma \ref{lemallprop} (i) and Corollary \ref{cor 3.3}.
\end{proof}

Next, we introduce the crucial lemmas for handling the low-high interactions appearing in the nonlinear estimation given in Section \ref{sec 5+}. The first estimate addresses the terms involving derivatives in periodic directions.

\begin{lemma}[Fractional chain rule for periodic functions, \cite{KwakKwon}]\label{lem3.9}
Let $\alpha\in(1,\infty)$, $s\in[0,\alpha)$ and $m\in\Z$. Let also $p,p_1,p_2\in(1,\infty)$ satisfy $p^{-1}=(\alpha-1)p_1^{-1}+p_2^{-1}$. Then for $u:\T^n\to \C$ we have
\begin{align*}
\||u|^{\alpha-k} u^k\|_{H^{s,p}_y}\lesssim \|u\|_{L_y^{p_1}}^{\alpha-1}\|u\|_{H_y^{s,p_2}}.
\end{align*}
\end{lemma}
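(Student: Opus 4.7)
The plan is to case-split on $s \in [0, \alpha)$ and exploit the pointwise bound
\[ |F(z) - F(w)| \lesssim (|z|^{\alpha-1} + |w|^{\alpha-1})\,|z - w|, \qquad F(z) := |z|^{\alpha-k} z^k, \]
valid for $\alpha > 1$ since $F$ is $C^1$ off the origin with $|F'(z)| \lesssim |z|^{\alpha-1}$. For $s = 0$ the estimate is immediate from H\"older's inequality applied to $|u|^\alpha = |u|^{\alpha-1} \cdot |u|$ with the exponent relation $\tfrac{1}{p} = \tfrac{\alpha-1}{p_1} + \tfrac{1}{p_2}$.

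For $s \in (0, 1)$ the key tool will be Strichartz's difference-quotient characterization of the periodic Bessel potential space: for $p \in (1, \infty)$,
\[ \|f\|_{H_y^{s,p}} \sim \|f\|_{L_y^p} + \Bigl\| \Bigl(\int_{|h| \leq \pi} \tfrac{|f(\cdot + h) - f(\cdot)|^2}{|h|^{n + 2s}}\, dh\Bigr)^{1/2} \Bigr\|_{L_y^p}. \]
Writing $\mathcal{D}_s$ for the square function on the right, the pointwise bound on $F$ combined with Cauchy--Schwarz in the $h$-variable yields
\[ (\mathcal{D}_s F(u))(y) \lesssim \bigl( M(|u|^{\alpha-1})(y) + |u(y)|^{\alpha-1} \bigr)\, (\mathcal{D}_s u)(y), \]
where $M$ is the periodic Hardy--Littlewood maximal operator. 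Taking $L_y^p$, applying H\"older with exponents $\bigl(\tfrac{p_1}{\alpha - 1}, p_2\bigr)$ and using the boundedness of $M$ on $L^{p_1/(\alpha-1)}(\T^n)$ (valid since $p_1/(\alpha-1) > 1$) closes this regime. Since the target space is scalar-valued, Strichartz's formula applies without the Hilbert-target obstruction emphasized in the introduction.

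For $s \in [1, \alpha)$ I would iterate: write $s = N + s'$ with $N = \lfloor s \rfloor$ and $s' \in [0, 1)$, differentiate $F(u)$ via $\partial F(u) = F'(u)\, \partial u + \widetilde F'(u)\, \partial \bar u$, and apply the Kato--Ponce fractional product rule at each step. Since $F'$ and $\widetilde F'$ again have the structural form $|z|^{\alpha'-k'}z^{k'}$ with $\alpha' = \alpha - 1$, the recursion closes by reducing to the previously treated $(0,1)$-case after $N$ differentiations, using that $\alpha - N > s'$. The main obstacle I anticipate is the bookkeeping of intermediate Lebesgue exponents in the successive Leibniz splittings: at every step one must allocate $\alpha - 1$ low-regularity copies of $u$ to $L^{p_1}$ and a single copy carrying all derivatives to $H^{s, p_2}$ in order to preserve the target relation $p^{-1} = (\alpha - 1) p_1^{-1} + p_2^{-1}$. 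Once this bookkeeping is handled, the proof concludes by induction on $N$.
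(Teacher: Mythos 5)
Your reduction of the regime $s\in(0,1)$ to the difference square function contains two genuine gaps. First, the pointwise inequality $(\mathcal{D}_s F(u))(y)\lesssim\bigl(M(|u|^{\alpha-1})(y)+|u(y)|^{\alpha-1}\bigr)(\mathcal{D}_s u)(y)$ does not follow from Cauchy--Schwarz in $h$ and is in fact false: take $u$ to vanish near $y$ and to consist of a smoothed spike of height $H$ and width $w\ll1$ at distance $\sim1$ from $y$; then $\mathcal{D}_s F(u)(y)\sim H^{\alpha}w^{n/2}$ while $M(|u|^{\alpha-1})(y)\,\mathcal{D}_s u(y)\sim H^{\alpha-1}w^{n}\cdot Hw^{n/2}=H^{\alpha}w^{3n/2}$, so the claimed bound degenerates by a factor $w^{-n}$. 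The known proofs of this chain rule (Christ--Weinstein type arguments, and the appendix of \cite{defocusing5dandhigher}) must treat the off-diagonal factor $|u(y+h)|^{\alpha-1}$ through Littlewood--Paley decompositions and vector-valued maximal/square-function estimates; it cannot be absorbed into a pointwise maximal-function domination. Second, the Strichartz characterization is not available in the full range you assert: the direction you need after H\"older, namely $\|\mathcal{D}_s u\|_{L^{p_2}_y}\lesssim\|u\|_{H^{s,p_2}_y}$, requires $p_2>\tfrac{2n}{n+2s}$ (a smoothed bump of radius $\delta$ gives $\|\mathcal{D}_s u\|_{L^{p_2}}\gtrsim\delta^{n/2}$ against $\|u\|_{H^{s,p_2}}\sim\delta^{n/p_2-s}$, also on $\mathbb{T}^n$), whereas the lemma allows every $p_2\in(1,\infty)$ and arbitrarily small $s>0$.

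The high-regularity step is also incomplete. When $1\le s<\alpha$ but $\alpha-\lfloor s\rfloor<1$ (e.g.\ $\alpha=1.5$, $s=1.2$), one differentiation produces factors of the structural form $|u|^{\alpha-1}$ which are merely H\"older continuous of order $\alpha-1\in(0,1)$, not $C^1$; your ``reduction to the previously treated $(0,1)$-case'' does not apply to them, because that case rests on the $C^1$ bound $|F(z)-F(w)|\lesssim(|z|^{\alpha-1}+|w|^{\alpha-1})|z-w|$. Placing the remaining $s-\lfloor s\rfloor$ derivatives on such a factor, and returning to a right-hand side containing only $\|u\|_{L_y^{p_1}}$ and $\|u\|_{H^{s,p_2}_y}$, requires the fractional chain rule for H\"older-continuous functions of Visan \cite{defocusing5dandhigher} (together with interpolation to redistribute the exponents), which is exactly the tool that the cited proof of Lemma \ref{lem3.9} in \cite{KwakKwon} uses, and which this paper's Section \ref{sec 3.2} is at pains to replace in the vector-valued setting. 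As it stands, your Kato--Ponce iteration that always hands all derivatives to a single full-regularity copy of $u$ does not close in this regime, so the proposal has genuine gaps both in the low-regularity core estimate and in the inductive step.
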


On the other hand, the estimates for handling the derivatives in Euclidean directions are more technical, since the functions in this case are no longer scalar but vector-valued. In particular, the fractional chain rule for H\"older-continuous functions, firstly introduced by Visan \cite{defocusing5dandhigher} and applied in the proof of Lemma \ref{lem3.9}, does in general not hold in the vector-valued setting.

To overcome such issue, our approach is to consider directly derivatives within Besov spaces, the latter space being a more friendly object for dealing with vector-valued functions. For this purpose, we firstly state the following well-known vector-valued fractional chain and product rule due to Kenig, Ponce and Vega \cite{Anisotropic}.

\begin{lemma}[Anisotropic fractional chain rule, \cite{Anisotropic}]\label{lem3.11}
Let $\sigma\in(0,1)$ and $F\in C^{1}(\C)$. Let also the numbers $p,q,p_1,p_2,q_2\in(1,\infty)$ and $q_1\in(1,\infty]$ satisfy $p^{-1}=p_1^{-1}+p_2^{-1}$ and $q^{-1}=q_1^{-1}+q_2^{-1}$. Then for $u:\R^m\times \T^n\to \C$ we have
\begin{align*}
\|D^{\sigma}_x F(u)\|_{L_x^p L_y^q}\lesssim \|F'(u)\|_{L_x^{p_1} L_y^{q_1}} \|D_x^{\sigma} u\|_{L_x^{p_2} L_y^{q_2}}.
\end{align*}
\end{lemma}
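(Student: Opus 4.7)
Proposal. The lemma is the anisotropic Kenig-Ponce-Vega fractional chain rule; I would follow the approach of Kenig, Ponce and Vega \cite{Anisotropic}, exploiting the crucial observation that the operator $D_x^\sigma$ differentiates only in the Euclidean variable. For each fixed $y\in\T^n$, the section $u(\cdot,y)$ is a scalar function on $\R^m$, so one can reduce to the classical scalar theory in $x$ and recover the mixed norm $L_x^p L_y^q$ via Minkowski--Fubini at the end. This side-steps the vector-valued obstruction of Walker flagged earlier in the paper, since the only place where the Strichartz difference formula is invoked is for scalar $\C$-valued targets.

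The argument proceeds in three steps. First, for each fixed $y$ apply the Strichartz square-function characterization in $x$: for $\sigma\in(0,1)$ and $r\in(1,\infty)$,
\[\|D_x^\sigma g(\cdot,y)\|_{L^r_x}\sim\Bigl\|\Bigl(\int_{\R^m}\frac{|g(\cdot+h,y)-g(\cdot,y)|^2}{|h|^{m+2\sigma}}\,dh\Bigr)^{1/2}\Bigr\|_{L^r_x}.\]
Second, linearize $F$ by the fundamental theorem of calculus,
\[F(u(x+h,y))-F(u(x,y))=(u(x+h,y)-u(x,y))\int_0^1 F'\bigl((1-\tau)u(x,y)+\tau u(x+h,y)\bigr)\,d\tau,\]
and dominate the $\tau$-average of $F'$ pointwise by $M_x F'(u)(x,y)+M_x F'(u)(x+h,y)$, where $M_x$ is the Hardy-Littlewood maximal operator in $x$, via a dyadic decomposition in $|h|$ together with the continuity of $F'$. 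Third, substitute back into the Strichartz integral: the $M_x F'(u)(x,y)$ contribution factors directly out of the $h$-integral to yield $M_x F'(u)(x,y)\cdot|D_x^\sigma u(x,y)|$, while the $M_x F'(u)(x+h,y)$ contribution is treated by the change of variable $x'=x+h$, which swaps the roles of the two endpoints and, after re-identifying the resulting $h$-integral through the Strichartz characterization applied to $u$, produces the same bound. Taking $L_x^p L_y^q$ norms then gives
\[\|D_x^\sigma F(u)\|_{L_x^p L_y^q}\lesssim\|M_x F'(u)\cdot D_x^\sigma u\|_{L_x^p L_y^q},\]
and Hölder's inequality in both variables combined with the mixed-norm maximal estimate $\|M_x F'(u)\|_{L_x^{p_1}L_y^{q_1}}\lesssim\|F'(u)\|_{L_x^{p_1}L_y^{q_1}}$---valid for all $p_1\in(1,\infty)$ and $q_1\in(1,\infty]$ by scalar Hardy-Littlewood and Fubini, or, for $q_1=\infty$, by the pointwise inequality $\sup_y M_x g(x,y)\le M_x[\sup_y|g(\cdot,y)|](x)$---completes the proof.

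The main obstacle is the pointwise domination of $\int_0^1 F'((1-\tau)u(x,y)+\tau u(x+h,y))\,d\tau$ by the Hardy-Littlewood maximal function of $F'(u)$ in the second step, which requires the standard but delicate real-variable arguments of Kenig-Ponce-Vega together with a careful dyadic splitting in $|h|$. Beyond this point the argument is a routine adaptation of the Euclidean chain rule, since the periodic variable $y$ never interacts with the fractional differentiation and appears only as an outer integration parameter---which is precisely why the vector-valued subtleties emphasized elsewhere in the paper do not arise in this lemma.
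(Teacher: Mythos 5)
The paper itself offers no proof of this lemma: it is quoted verbatim from Kenig--Ponce--Vega, whose appendix establishes exactly this mixed-norm chain rule by genuinely mixed-norm harmonic analysis (square-function characterizations of $D_x^\sigma$ in $L_x^pL_y^q$ and vector-valued, Fefferman--Stein-type maximal inequalities). Measured against what is actually needed, your proposal has a genuine gap, and it sits precisely at the point you declare harmless. With the paper's convention, $L_x^pL_y^q$ means the $y$-norm is \emph{innermost} and the $x$-norm outermost, so $y$ is not ``an outer integration parameter.'' Freezing $y$ and applying the scalar Strichartz characterization controls $\|D_x^\sigma F(u)(\cdot,y)\|_{L_x^p}$ for each fixed $y$, i.e.\ (after integrating in $y$) the $L_y^qL_x^p$ norm, which is the wrong nesting. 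Minkowski converts between the two nestings only in one direction ($\|f\|_{L_x^pL_y^q}\le\|f\|_{L_y^qL_x^p}$ requires $q\le p$), and imposing the needed orderings simultaneously on the left-hand side and on the two right-hand factors, together with $p^{-1}=p_1^{-1}+p_2^{-1}$, $q^{-1}=q_1^{-1}+q_2^{-1}$, forces $p=q$, $p_i=q_i$; so the ``freeze $y$ and Fubini--Minkowski at the end'' scheme collapses to the unmixed case and cannot yield the stated estimate (in the paper's applications the inner $y$-exponents are very large while the $x$-exponents are moderate, and $q_1=\infty$ occurs). The same confusion reappears in your maximal bound: with the $y$-norm inside, $\|M_xF'(u)\|_{L_x^{p_1}L_y^{q_1}}\lesssim\|F'(u)\|_{L_x^{p_1}L_y^{q_1}}$ is the vector-valued (Fefferman--Stein) maximal inequality, not ``scalar Hardy--Littlewood plus Fubini.'' In short, the mixed-norm/vector-valued machinery you propose to bypass is the actual content of the lemma; this is why the paper cites KPV for it and elsewhere builds Besov-space substitutes.

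There is a second, independent problem: the pointwise domination
\[
\int_0^1\bigl|F'\bigl((1-\tau)u(x,y)+\tau u(x+h,y)\bigr)\bigr|\,d\tau\ \lesssim\ M_xF'(u)(x,y)+M_xF'(u)(x+h,y)
\]
is false for general $F\in C^1$. The segment in $\C$ joining $u(x,y)$ and $u(x+h,y)$ need not meet the range of $u$, so values of $F'$ along it are unrelated to averages of $F'(u)$ over spatial balls; one can make the left side huge while both maximal functions stay small. The bound (without any maximal function) does hold for the power-type nonlinearities used later in the paper, where $|F'(z)|\sim|z|^{\alpha-1}$ is monotone in $|z|$, but the lemma is stated for arbitrary $C^1$ functions, so your mechanism does not prove it as stated. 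Relatedly, the treatment of the translated term ``by the change of variable $x'=x+h$'' cannot be carried out pointwise inside the $h$-integral; it must be done at the level of the norm, where the inner $L_y^q$ and the $L^2_h$ structure again obstruct the naive swap (note also that even the scalar characterization $\|D_x^\sigma g\|_{L^r_x}\sim\|\mathcal{D}_\sigma g\|_{L^r_x}$ holds only for $r>2m/(m+2\sigma)$, not for all $r\in(1,\infty)$). A correct self-contained proof would have to reproduce the KPV mixed-norm machinery rather than reduce to the fixed-$y$ scalar theory.
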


\begin{lemma}[Anisotropic fractional product rule, \cite{Anisotropic}]\label{lem3.12}
Let $\sigma\in(0,1)$. Let also the numbers
$$p,p_1,p_2,p_3,p_4,q,q_1, q_2,q_3,q_4\in(1,\infty)$$
satisfy
$p^{-1}=p_1^{-1}+p_2^{-1}=p_3^{-1}+p_4^{-1}$ and $q^{-1}=q_1^{-1}+q_2^{-1}=q_3^{-1}+q_4^{-1}$. Then for $f,g:\R^m\times \T^n\to \C$ we have
\begin{align*}
\|D^{\sigma}_x (fg)\|_{L_x^p L_y^q}\lesssim \|D_x^\sigma f\|_{L_x^{p_1} L_y^{q_1}}\|g\|_{L_x^{p_2} L_y^{q_2}} + \|f\|_{L_x^{p_3} L_y^{q_3}}\|D_x^\sigma g\|_{L_x^{p_4} L_y^{q_4}} .
\end{align*}
\end{lemma}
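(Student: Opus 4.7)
\emph{Plan.} Because $D_x^\sigma$ acts only in the Euclidean variable $x\in\R^m$, I would regard the statement as a fractional Leibniz rule on $\R^m$ with values in the target space $L_y^q(\T^n)$, $q\in(1,\infty)$. Since $L_y^q$ is a UMD Banach lattice, the vector-valued Littlewood--Paley square-function equivalence, the Fefferman--Stein maximal inequality, and the Mihlin multiplier theorem all hold on $L_x^p(\R^m;L_y^q(\T^n))$. With these tools in hand, the scalar Kenig--Ponce--Vega argument based on an $x$-paraproduct transfers almost verbatim to the mixed-norm setting.

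\emph{Main steps.} Let $P_j^x$ denote the $x$-LP projector at scale $2^j$ and decompose
\[
fg=\sum_j(P_j^x f)(P_{<j-5}^x g)+\sum_j(P_{<j-5}^x f)(P_j^x g)+\sum_{|j-k|\leq 5}(P_j^x f)(P_k^x g)=:\Pi_1+\Pi_2+\Pi_3.
\]
For $\Pi_1$, each summand has $x$-frequency in an annulus of radius $\sim 2^j$, so $D_x^\sigma$ is essentially multiplication by $2^{j\sigma}$. The vector-valued square function then gives, with $M^x$ denoting the $x$-Hardy--Littlewood maximal function,
\[
\|D_x^\sigma\Pi_1\|_{L_x^p L_y^q}\lesssim \Bigl\|\Bigl(\sum_j|D_x^\sigma P_j^x f|^2\Bigr)^{1/2}M^x g\Bigr\|_{L_x^p L_y^q}\lesssim \|D_x^\sigma f\|_{L_x^{p_1} L_y^{q_1}}\|g\|_{L_x^{p_2} L_y^{q_2}},
\]
where the last step uses H\"older in $(x,y)$, the Fefferman--Stein maximal inequality in $L_x^{p_2}(L_y^{q_2})$ and the LP equivalence in $L_x^{p_1}(L_y^{q_1})$. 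The term $\Pi_2$ is symmetric and produces the $(p_3,q_3,p_4,q_4)$ contribution.

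\emph{Diagonal piece and main obstacle.} For $\Pi_3$ the product $P_j^x f\cdot P_k^x g$ with $|j-k|\leq 5$ is only $x$-frequency-supported in the ball $\{|\xi|\lesssim 2^j\}$ rather than an annulus, so after applying $D_x^\sigma$ one must further decompose the output frequency. Writing $D_x^\sigma\Pi_3=\sum_\ell 2^{\ell\sigma}\tilde P_\ell^x\Pi_3$ with output scale constrained to $\ell\leq j+O(1)$, the geometric series in $j-\ell$ converges thanks to $\sigma>0$, and Hölder together with Fefferman--Stein closes the estimate as in the previous step, with the splitting of exponents chosen to produce either of the two terms on the right-hand side. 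The main conceptual obstacle is not the paraproduct algebra but rather the verification that all relevant Littlewood--Paley, multiplier, and maximal theorems persist on the vector-valued mixed-norm space $L_x^p(\R^m;L_y^q(\T^n))$; this, however, is a classical consequence of the UMD property of $L^q$ for $q\in(1,\infty)$ together with the Fefferman--Stein inequality, which reduces the entire proof to standard Calder\'on--Zygmund bookkeeping.
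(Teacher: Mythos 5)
Your plan is essentially correct, but it is worth noting that the paper does not prove this lemma at all: it is quoted directly from Kenig--Ponce--Vega \cite{Anisotropic}, whose appendix establishes the fractional Leibniz rule in mixed norms (there with the inner norm taken in a temporal variable on $\R$), so the paper's ``proof'' is a citation plus an implicit, routine transfer to the $\R^m_x\times\T^n_y$ setting. What you propose is a genuine, self-contained alternative: run the classical paraproduct argument in $x$ with values in the Banach lattice $L^q_y(\T^n)$, using the lattice-valued Fefferman--Stein inequality, the mixed-norm Littlewood--Paley square-function equivalence, and the vector-valued Mikhlin theorem (the same McConnell-type multiplier theorem the paper already invokes in the proof of Lemma \ref{lem3.13}). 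This buys transparency about why the periodic $y$-variable is harmless -- only pointwise H\"older in $y$ and $x$-directional maximal/square-function bounds for $L^q_y$-valued functions are ever used -- at the cost of having to verify (or cite) those vector-valued ingredients, whereas the citation route is shorter but leaves the adaptation from the KPV setting implicit. Two small points of bookkeeping in your sketch: in the displayed bound for $\Pi_1$ you have compressed the step where the output is first reduced by the reverse square-function estimate and the multiplier $D_x^\sigma\tilde P_j^x$ is converted into $2^{j\sigma}$ times convolution with an $L^1$-normalized Schwartz bump (hence dominated by $M^x$), so the Fefferman--Stein inequality is needed inside the $\ell^2_j$-sum and not only on the single factor $M^xg$; and in the diagonal piece, after summing the output scales using $\sigma>0$ you still need a Cauchy--Schwarz (or equivalent) step in $j$ to return to a product of the two stated norms. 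Both are standard and do not affect the validity of the approach.
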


Before we finally prove the main lemma, we still need the following useful embedding result.

\begin{lemma}\label{lem3.13}
For any $s,\sigma\in(0,\infty)$ and $p,q,r\in(1,\infty)$ satisfying $q^{-1}-p^{-1}=\sigma/m$ we have
\begin{align*}
\|D_x^{s} u\|_{L_x^{p} L_y^r}\lesssim \|u\|_{B_{x,q,q}^{s+\sigma} L_y^r}.
\end{align*}
\end{lemma}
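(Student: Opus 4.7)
The plan is to realize the claimed bound as a short chain of three standard embeddings,
\[
\|D_x^s u\|_{L_x^p L_y^r}
\;\lesssim\; \|D_x^s u\|_{L_x^{p,q} L_y^r}
\;\lesssim\; \|D_x^s u\|_{B_{x,q,q}^{\sigma} L_y^r}
\;\sim\; \|u\|_{B_{x,q,q}^{s+\sigma} L_y^r}.
\]
The hypothesis $\sigma = m(q^{-1}-p^{-1})>0$ forces $q<p$, so the first inequality is the Lorentz embedding $L^{p,q}(E)\hookrightarrow L^p(E)$ (which is valid for $q\le p$ and any Banach space $E$), applied with $E=L_y^r$ to the scalar function $x\mapsto \|(D_x^s u)(x,\cdot)\|_{L_y^r}$. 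The second inequality is precisely Lemma~\ref{lemallprop}(ii) applied with $(p_1,p_2)=(q,p)$, Besov third index $q$, $k=m$ and target space $E=L_y^r$; the required smoothness index $k(p_1^{-1}-p_2^{-1})$ then matches $\sigma$ exactly.

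For the final equivalence, I use the standard Besov derivative-shift identity. By definition of $\|\cdot\|_{B_{x,q,q}^\sigma L_y^r}$ via the Littlewood--Paley projectors $P_N^x$, and using that $D_x^s$ commutes with $P_N^x$ and acts essentially as multiplication by $N^s$ on the dyadic shell $\{|\xi_x|\sim N\}$, a Mikhlin-type multiplier argument (acting only in the $x$-variable, with $L_y^r$ remaining passive) yields $\|P_N^x D_x^s u\|_{L_x^q L_y^r}\sim N^s \|P_N^x u\|_{L_x^q L_y^r}$ for every dyadic $N\ge 2$; the low-frequency piece $P_1^x D_x^s u$ is handled separately using that $|\xi_x|^s$ is uniformly bounded on $\{|\xi_x|\lesssim 1\}$ for $s>0$. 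Raising to the $q$-th power and summing in $N$ gives the claimed equivalence of norms.

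The main technical point I expect is the derivative shift in the last step: one must check that the dyadic Mikhlin bound transfers faithfully to the $L_y^r$-valued setting (which it does, as the multiplier is $y$-independent) and that the $P_1^x$ contribution does not spoil the $B^{s+\sigma}$ control. Both are routine but represent the one place in an otherwise very short proof where bookkeeping in the anisotropic vector-valued setting is required.
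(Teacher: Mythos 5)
Your argument is correct and is essentially the paper's own proof: the paper also invokes Lemma \ref{lemallprop}~(ii) (whose Lorentz target $L^{p,q}L_y^r$ already contains the embedding into $L^p_xL_y^r$ that you spell out) and then shifts the derivative inside the Besov scale via the vector-valued Mikhlin theorem on each dyadic $x$-shell. Two small precisions: for the low-frequency piece, boundedness of $|\xi_x|^s$ on $\{|\xi_x|\lesssim 1\}$ alone does not give an $L_x^qL_y^r$ multiplier bound when $q\neq 2$; instead apply the same shell-by-shell Mikhlin bound $\lesssim M^s$ for dyadic $M\leq 1$ and sum the geometric series using $s>0$ (this is why the paper takes $M\in 2^{\Z}$), and note that in your final step only the inequality $\|D_x^s u\|_{B_{x,q,q}^{\sigma}L_y^r}\lesssim\|u\|_{B_{x,q,q}^{s+\sigma}L_y^r}$ is valid and needed, since the reverse direction fails at low frequencies.
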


\begin{proof}
First notice that by Lemma \ref{lemallprop} (ii) it holds $\| D_x^s u\|_{L_x^{p} L_y^r}
\lesssim \|D_x^s u\|_{B_{x,q,q}^{\sigma} L_y^r}$. On the other hand, one easily verifies that for a dyadic number $M\in 2^\Z$ the multiplier $P_{\sim M}^x D_x^s$ satisfies the Mikhlin condition \cite[(1.3)]{McConnell} with operator norm $M^s$. Hence by the vector-valued Mikhlin multiplier theorem (see \cite[Thm. 1.1]{McConnell}) we know that $\|P_M^x (D_x^s u)\|_{L_x^p L_y^r}\lesssim M^s \|P_M^x u\|_{L_x^p L_y^r}$, from which the desired claim follows by combining the definition of Besov spaces.
\end{proof}

We are now ready to prove the main lemma.

\begin{lemma}\label{lem3.13+}
Let $\alpha\in(1,\infty)$, $s\in[0,\alpha)$ and $m\in\Z$. Let also $p,q,p_1,p_2,q_1,q_2\in(1,\infty)$ satisfy $p^{-1}=(\alpha-1)p_1^{-1}+p_2^{-1}$ and $q^{-1}=(\alpha-1)q_1^{-1}+q_2^{-1}$. For $i\in\{1,2\}$ let also the numbers $\sigma_a,\sigma_b$ satisfy $\sigma_a\in(0,m(1-p_1^{-1}))$ and $\sigma_b\in(0,m(1-p_2^{-1}))$. Then for $u:\R^m\times \T^n\to \C$ we have
\begin{align*}
\|D_x^s(|u|^{\alpha-k} u^k)\|_{L_x^{p}L_y^{q}}\lesssim \|u\|_{ B_{x,(\frac{1}{p_1}+\frac{\sigma_a}{m})^{-1},(\frac{1}{p_1}+\frac{\sigma_a}{m})^{-1}}^{\sigma_a}L_y^{q_1}}^{\alpha-1}\|u\|
_{B_{x,(\frac{1}{p_2}+\frac{\sigma_b}{m})^{-1},(\frac{1}{p_1}+\frac{\sigma_b}{m})^{-1}}^{s+\sigma_b}L_y^{q_2}}.
\end{align*}
\end{lemma}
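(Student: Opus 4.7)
The plan is to apply the anisotropic Kenig--Ponce--Vega chain rule (Lemma \ref{lem3.11}) to separate the composition $F(u) := |u|^{\alpha-k} u^k$ into a product involving $F'(u)$ and $D_x^s u$, and then to upgrade the resulting scalar $L^p$ norms to the vector-valued Besov norms appearing in the statement via Lemma \ref{lem3.13}. Since $\alpha > 1$, one has $F \in C^1$ with $|F'(z)| \lesssim |z|^{\alpha-1}$, so for $s \in (0,1)$ Lemma \ref{lem3.11} applied along $x$ with the H\"older splittings $\tfrac{1}{p} = \tfrac{\alpha-1}{p_1}+\tfrac{1}{p_2}$ and $\tfrac{1}{q} = \tfrac{\alpha-1}{q_1}+\tfrac{1}{q_2}$ (which automatically satisfy $p_1/(\alpha-1), q_1/(\alpha-1) > 1$ because $p,q > 1$) yields
\[
\|D_x^s F(u)\|_{L_x^p L_y^q} \lesssim \|F'(u)\|_{L_x^{p_1/(\alpha-1)} L_y^{q_1/(\alpha-1)}} \|D_x^s u\|_{L_x^{p_2}L_y^{q_2}} \lesssim \|u\|_{L_x^{p_1}L_y^{q_1}}^{\alpha-1} \|D_x^s u\|_{L_x^{p_2}L_y^{q_2}};
\]
the case $s = 0$ follows directly from H\"older's inequality without the chain rule.

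Next, the hypothesis $\sigma_i \in (0, m(1 - 1/p_i))$ forces $r_i := (1/p_i + \sigma_i/m)^{-1} \in (1, p_i)$, so two applications of Lemma \ref{lem3.13} (with differentiation parameter $0$ for the first factor and $s$ for the second) deliver $\|u\|_{L_x^{p_1}L_y^{q_1}} \lesssim \|u\|_{B^{\sigma_1}_{x,r_1,r_1}L_y^{q_1}}$ and $\|D_x^s u\|_{L_x^{p_2}L_y^{q_2}} \lesssim \|u\|_{B^{s+\sigma_2}_{x,r_2,r_2}L_y^{q_2}}$. Inserting these into the chain-rule bound closes the range $s \in [0, 1)$. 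For the remaining range $s \in [1, \alpha)$, write $s = N + \tilde{s}$ with $N \in \N$ and $\tilde{s} \in [0, 1)$, expand $\partial_x^N F(u)$ via Fa\`a di Bruno as a finite sum of terms of the form $G(u)\prod_i \partial_x^{\beta_i} u$ with $\sum_i |\beta_i| = N$ and $G$ an appropriate smooth symbol of the expected homogeneity, distribute the residual $D_x^{\tilde{s}}$ across these products via the anisotropic fractional product rule (Lemma \ref{lem3.12}), and apply Lemma \ref{lem3.11} to each $G(u)$ factor. Every scalar $L^p$ factor thereby produced is once again absorbed into one of the two target Besov norms via Lemma \ref{lem3.13}.

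The main difficulty I anticipate is the exponent bookkeeping in the super-unit case: each use of Lemma \ref{lem3.12} or Lemma \ref{lem3.11} introduces a free H\"older splitting that must be tuned so that exactly the two Besov spaces with smoothing parameters $\sigma_1, \sigma_2$ (and no intermediate Besov scale) appear on the right-hand side. In the principal sub-unit range, which is presumably the one actually exploited in the nonlinear estimates of Section \ref{sec 5+}, the argument collapses to the two-line composition of Lemma \ref{lem3.11} and Lemma \ref{lem3.13} described above.
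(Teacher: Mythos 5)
Your treatment of the sub-unit range coincides with the paper's first step: for $s\in(0,1]$ the paper likewise applies the anisotropic chain rule (Lemma \ref{lem3.11}) and then upgrades $\|u\|_{L_x^{p_1}L_y^{q_1}}$ and $\|D_x^su\|_{L_x^{p_2}L_y^{q_2}}$ to the two Besov norms via Lemma \ref{lemallprop} (ii) and Lemma \ref{lem3.13}, so that part of your proposal is fine. The genuine gap is in the range $s\in(1,\alpha)$. You expand $\partial_x^{N}F(u)$, $N=\lfloor s\rfloor$, by Fa\`a di Bruno into terms $G(u)\prod_i\partial_x^{\beta_i}u$ with ``smooth'' symbols $G$, distribute the residual $D_x^{\tilde s}$ by Lemma \ref{lem3.12}, and propose to apply Lemma \ref{lem3.11} to $D_x^{\tilde s}G(u)$. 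But the symbols produced are $G=F^{(j)}$ with $F(z)=|z|^{\alpha-k}z^k$, which behave like $|z|^{\alpha-j}$; whenever $\alpha-j<1$ (for instance $j=N$ with $\lfloor s\rfloor>\alpha-1$, e.g.\ $\alpha=2.3$, $s=2.1$) the symbol is merely H\"older continuous, not $C^1$, so Lemma \ref{lem3.11} does not apply --- indeed $G'$ has negative homogeneity and $\|G'(u)\|$ cannot be controlled by any positive power of $u$ on the right-hand side. The classical fallback, Visan's fractional chain rule for H\"older continuous functions, is exactly the tool the paper points out is unavailable here, because the functions are $L_y^q$-valued and Strichartz's difference characterization of $D_x^{\tilde s}$ admits no such vector-valued extension. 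So the real obstruction in the super-unit case is not ``exponent bookkeeping'' but the failure of any $C^1$ or H\"older chain rule for the fractional operator acting on the Fa\`a di Bruno symbols; and the super-unit case cannot be dismissed, since in Section \ref{sec 5+} the lemma is invoked with differentiation order $s+\nu-\sigma$, which exceeds $1$ for the relevant $s_c$.

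The paper's proof circumvents precisely this point: it never lets a fractional derivative act directly on a H\"older composition. It writes $D_x^s\sim D_x^{s-1}\nabla_x$, applies the product rule (Lemma \ref{lem3.12}) once, measures the resulting factor $u^{\alpha-1}$ in a Besov norm through Corollary \ref{cor 3.2} (the H\"older chain rule at the level of Besov difference norms, which survives vector-valued targets), resolves the intermediate exponents by complex interpolation (Lemma \ref{lemallprop} (vii)), and then proceeds by induction on $\alpha$ in unit steps so that the terms $D_x^{k}(u^{\alpha-1})$ are handled by the induction hypothesis rather than by a chain rule. To repair your argument you would have to replace the step ``apply Lemma \ref{lem3.11} to each $G(u)$'' by a Besov-space substitute of this kind whenever the symbol receiving $D_x^{\tilde s}$ is only H\"older; as written, that step fails and the proof does not close for $s\in(1,\alpha)$.
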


\begin{proof}
First, for $s\in(0,1]$ we use Lemma \ref{lem3.11}, Lemma \ref{lemallprop} (ii) and Lemma \ref{lem3.13} to conclude that
\begin{align*}
\begin{aligned}
\|D_x^s(|u|^{\alpha-k} u^k)\|_{L_x^{p}L_y^{q}}&\lesssim
\|u\|_{ L_x^{p_1} L_y^{q_1}}^{\alpha-1}\|D_x^s u\|_{L_x ^{p_2}L_y^{q_2}}\\
&\lesssim \|u\|_{ B_{x,(\frac{1}{p_1}+\frac{\sigma_a}{m})^{-1},(\frac{1}{p_1}+\frac{\sigma_a}{m})^{-1}}^{\sigma_a}L_y^{q_1}}^{\alpha-1}\|u\|
_{B_{x,(\frac{1}{p_2}+\frac{\sigma_b}{m})^{-1},(\frac{1}{p_1}+\frac{\sigma_b}{m})^{-1}}^{s+\sigma_b}L_y^{q_2}}.
\end{aligned}
\end{align*}
Hence we may always assume that $s\in(1,\alpha)$ in the rest of the proof. We shall prove the claim via induction on $\alpha$. Consider first the case $\alpha\in (1,2]$. Using the characterization of $L^p_x L_y^q$-norm via the Littlewood-Paley square function (see e.g. \cite[Lem. A.3]{Anisotropic}) one easily verifies $\|D_x^s(|u|^{\alpha-k} u^k)\|_{L_x^{p}L_y^{q}}\sim \|D_x^{s-1}(\nabla_x(|u|^{\alpha-k} u^k))\|_{L_x^{p}L_y^{q}}$. Let $0<\vare\ll 1$ be a sufficiently small number and define $p_a$, $p_b$, $q_a$, $q_b$ via
\begin{align}\label{3.10}
\begin{aligned}
\frac{1}{p_a}&=\frac{1-\frac{1-\vare}{s}}{p_1}+\frac{\frac{1-\vare}{s}}{p_2}+\frac{\vare}{m},\quad
\frac{1}{p_b}=\frac{1}{p}-\frac{1}{p_a},\\
\frac{1}{q_a}&=\frac{1-\frac{1-\vare}{s}}{q_1}+\frac{\frac{1-\vare}{s}}{q_2},\quad
\frac{1}{q_b}=\frac{1}{q}-\frac{1}{q_a}.
\end{aligned}
\end{align}
Since $1<s<\alpha$, the numbers $p_a$, $p_b$, $q_a$, $q_b$ are all in $(1,\infty)$ by choosing $\vare\ll 1$. Moreover, by direct computation one verifies that
\begin{align*}
\frac{1}{p_b}+\frac{\vare}{m}=\frac{1-\frac{s-1-\vare}{s(\alpha-1)}}{(p_1/(\alpha-1))}
+\frac{\frac{s-1-\vare}{s(\alpha-1)}}{(p_2/(\alpha-1))},\quad
\frac{1}{q_b}=\frac{1-\frac{s-1-\vare}{s(\alpha-1)}}{(q_1/(\alpha-1))}
+\frac{\frac{s-1-\vare}{s(\alpha-1)}}{(q_2/(\alpha-1))}.
\end{align*}
Then using Lemma \ref{lem3.12} and Lemma \ref{lem3.13} we obtain
\begin{align}\label{3.12}
\begin{aligned}
&\,\|D_x^s(|u|^{\alpha-k} u^k)\|_{L_x^{p}L_y^{q}}\sim \|D_x^{s-1}(\nabla_x(|u|^{\alpha-k} u^k))\|_{L_x^{p}L_y^{q}}\\
\lesssim&\, \|u\|_{L_x^{p_1}L_y^{q_1}}^{\alpha-1}\|D_x^{s-1} (\nabla_x u)\|_{L_x^{p_2}L_y^{q_2}}
+\|\nabla_x u\|_{L_x^{p_a}L_y^{q_a}}\|D_x^{s-1}(u^{\alpha-1})\|_{L_x^{p_b}L_y^{q_b}}\\
\lesssim&\,  \|u\|_{ B_{x,(\frac{1}{p_1}+\frac{\sigma_a}{m})^{-1},(\frac{1}{p_1}+\frac{\sigma_a}{m})^{-1}}^{\sigma_a}L_y^{q_1}}^{\alpha-1}\|u\|
_{B_{x,(\frac{1}{p_2}+\frac{\sigma_b}{m})^{-1},(\frac{1}{p_1}+\frac{\sigma_b}{m})^{-1}}^{s+\sigma_b}L_y^{q_2}}\\
&\qquad+\|\nabla_x u\|_{L_x^{p_a}L_y^{q_a}}\|u^{\alpha-1}\|_{B_{x,(\frac{1}{p_b}+\frac{\vare}{m})^{-1},(\frac{1}{p_b}+\frac{\vare}{m})^{-1}}^{s-1+\vare}L_y^{q_b}}.
\end{aligned}
\end{align}
It remains to estimate the second product in \eqref{3.12}. Using Lemma \ref{lem3.13}, Lemma \ref{lemallprop} (vii), \eqref{3.10} and Lemma \ref{lemallprop} (iv) we obtain
\begin{align*}
\begin{aligned}
\|\nabla_x u\|_{L_x^{p_a}L_y^{q_a}}&\lesssim\|u\|_{B_{x,(\frac{1}{p_a}+\frac{\vare}{m})^{-1},(\frac{1}{p_a}+\frac{\vare}{m})^{-1}}^{1+\vare} L_y^{q_a}}\\
&\lesssim
\|u\|_{B_{x,(\frac{1}{p_1}+\frac{2\vare}{m})^{-1},(\frac{1}{p_1}+\frac{2\vare}{m})^{-1}}^{2\vare} L_y^{q_1}}^{1-\frac{1-\vare}{s}}
\|u\|_{B_{x,(\frac{1}{p_2}+\frac{2\vare}{m})^{-1},(\frac{1}{p_2}+\frac{2\vare}{m})^{-1}}^{s+2\vare} L_y^{q_2}}^{\frac{1-\vare}{s}}\\
&\lesssim
\|u\|_{B_{x,(\frac{1}{p_1}+\frac{\sigma_a}{m})^{-1},(\frac{1}{p_1}+\frac{\sigma_a}{m})^{-1}}^{\sigma_a} L_y^{q_1}}^{1-\frac{1-\vare}{s}}
\|u\|_{B_{x,(\frac{1}{p_2}+\frac{\sigma_b}{m})^{-1},(\frac{1}{p_2}+\frac{\sigma_b}{m})^{-1}}^{s+\sigma_b} L_y^{q_2}}^{\frac{1-\vare}{s}}.
\end{aligned}
\end{align*}
In the same way, by also combining Corollary \ref{cor 3.2} we infer that
\begin{align*}
\begin{aligned}
\|u^{\alpha-1}\|_{B_{x,(\frac{1}{p_b}+\frac{\vare}{m})^{-1},(\frac{1}{p_b}+\frac{\vare}{m})^{-1}}^{s-1+\vare}L_y^{q_b}}
&\lesssim \|u\|^{\alpha-1}
_{B_{x,(\alpha-1)(\frac{1}{p_b}+\frac{\vare}{m})^{-1},(\alpha-1)(\frac{1}{p_b}+\frac{\vare}{m})^{-1}}^{\frac{s-1+\vare}{\alpha-1}}L_y^{(\alpha-1)q_b}}\\
&\lesssim
\|u\|_{B_{x,p_1,p_1}^{0} L_y^{q_1}}^{\alpha-1-(1-\frac{1-\vare}{s})}
\|u\|_{B_{x,p_2,p_2}^{s} L_y^{q_2}}^{1-\frac{1-\vare}{s}}
\\
&\lesssim
\|u\|_{B_{x,(\frac{1}{p_1}+\frac{\sigma_a}{m})^{-1},(\frac{1}{p_1}+\frac{\sigma_a}{m})^{-1}}^{\sigma_a} L_y^{q_1}}^{\alpha-1-(1-\frac{1-\vare}{s})}
\|u\|_{B_{x,(\frac{1}{p_2}+\frac{\sigma_b}{m})^{-1},(\frac{1}{p_2}+\frac{\sigma_b}{m})^{-1}}^{s+\sigma_b} L_y^{q_2}}^{1-\frac{1-\vare}{s}}
\end{aligned}
\end{align*}
and the desired claim for $s\in(1,\alpha)$ follows.

Assume now the claim holds for $\alpha\in(1,N]$ with $N\in\N_{\geq 2}$. We aim to show that the claim continues to hold for $\alpha\in(N,N+1]$. For $k\in \N$, $0\leq k\leq \lfloor s\rfloor-1$ and $0<\vare\ll 1$ define
\begin{align*}
\begin{aligned}
\frac{1}{\hat{p}_{a,k}}&=\frac{\frac{k}{s}}{p_1}+\frac{1-\frac{k}{s}}{p_2},\quad
\frac{1}{\hat{p}_{b,k}}=\frac{1}{p}-\frac{1}{\hat{p}_{a,k}},\\
\frac{1}{\hat{q}_{a,k}}&=\frac{\frac{k}{s}}{q_1}+\frac{1-\frac{k}{s}}{q_2},\quad
\frac{1}{\hat{q}_{b,k}}=\frac{1}{q}-\frac{1}{\hat{q}_{a,k}},\\
\frac{1}{p_{a,k}}&=\frac{1-\frac{1+k-\vare}{s}}{p_1}+\frac{\frac{1+k-\vare}{s}}{p_2}+\frac{\vare}{m},\quad
\frac{1}{p_{b,k}}=\frac{1}{p}-\frac{1}{p_{a,k}},\\
\frac{1}{q_{a,k}}&=\frac{1-\frac{1+k-\vare}{s}}{q_1}+\frac{\frac{1+k-\vare}{s}}{q_2},\quad
\frac{1}{q_{b,k}}=\frac{1}{q}-\frac{1}{q_{a,k}}.
\end{aligned}
\end{align*}
Using Leibniz rule and Lemma \ref{lem3.12} we obtain
\begin{align}\label{3.12+}
\begin{aligned}
\|D_x^s (|u|^{\alpha-k} u^k)\|&\lesssim \sum_{k=0}^{\lfloor s\rfloor-1}
\|D_x^{s-1-k}(\nabla_x u)\|_{L_x^{\hat{p}_{a,k}} L_y^{\hat{q}_{a,k}}}\|D_x^k(u^{\alpha-1})\|_{L_x^{\hat{p}_{b,k}} L_y^{\hat{q}_{b,k}}}\\
&\qquad+\sum_{k=0}^{\lfloor s\rfloor-1}
\|D_x^{k}(\nabla_x u)\|_{L_x^{{p}_{a,k}} L_y^{{q}_{a,k}}}\|D_x^{s-1-k}(u^{\alpha-1})\|_{L_x^{{p}_{b,k}} L_y^{{q}_{b,k}}}.
\end{aligned}
\end{align}
We will only prove the claimed estimate for the first summand in \eqref{3.12+}, the one for the second summand in \eqref{3.12+} can be proved by using similar interpolation arguments and the induction hypothesis. Using Lemma \ref{lem3.13} and Lemma \ref{lemallprop} (vii) we first infer that
\begin{align*}
\|D_x^{s-1-k}(\nabla_x u)\|_{L_x^{\hat{p}_{a,k}} L_y^{\hat{q}_{a,k}}}
&\lesssim
\|u\|_{B_{x,(\frac{1}{\hat{p}_{a,k}}+\frac{\vare}{m})^{-1}
(\frac{1}{\hat{p}_{a,k}}+\frac{\vare}{m})^{-1}}^{s-k+\vare}L_y^{\hat{q}_{a,k}}}\\
&\lesssim
\|u\|_{B_{x,p_1,p_1}^{0}L_y^{q_1}}^{\frac{k}{s}}
\|u\|_{B_{x,p_2,p_2}^{s}L_y^{q_2}}^{1-\frac{k}{s}}\\
&\lesssim
\|u\|_{B_{x,(\frac{1}{p_1}+\frac{\sigma_a}{m})^{-1},(\frac{1}{p_1}+\frac{\sigma_a}{m})^{-1}}^{\sigma_a} L_y^{q_1}}^{\frac{k}{s}}
\|u\|_{B_{x,(\frac{1}{p_2}+\frac{\sigma_b}{m})^{-1},(\frac{1}{p_2}+\frac{\sigma_b}{m})^{-1}}^{s+\sigma_b} L_y^{q_2}}^{1-\frac{k}{s}}.
\end{align*}
Next, we first notice that
\begin{align*}
\frac{1}{\hat{p}_{b,k}}&=\frac{\alpha-2}{p_1}+(\frac{1-\frac{k}{s}}{p_1}+\frac{\frac{k}{s}}{p_2})
=:\frac{\alpha-2}{p_1}+\frac{1}{\hat{r}_{p,k}},\\
\frac{1}{\hat{q}_{b,k}}&=\frac{\alpha-2}{q_1}+(\frac{1-\frac{k}{s}}{q_1}+\frac{\frac{k}{s}}{q_2})
=:\frac{\alpha-2}{q_1}+\frac{1}{\hat{r}_{q,k}}.
\end{align*}
Hence using the induction hypothesis and Lemma \ref{lemallprop} (vii) we obtain
\begin{align*}
&\,\|D_x^k(u^{\alpha-1})\|_{L_x^{\hat{p}_{b,k}} L_y^{\hat{q}_{b,k}}}\\
\lesssim&\,
\|u\|_{ B_{x,(\frac{1}{p_1}+\frac{\sigma_a}{m})^{-1},(\frac{1}{p_1}+\frac{\sigma_a}{m})^{-1}}^{\sigma_a}L_y^{q_1}}^{\alpha-2}\|u\|
_{B_{x,(\frac{1}{\hat{r}_{p,k}}+\frac{\vare}{m})^{-1},(\frac{1}{\hat{r}_{p,k}}+\frac{\vare}{m})^{-1}}^{k+\vare}L_y^{\hat{r}_{q,k}}}\\
\lesssim&\,
\|u\|_{ B_{x,(\frac{1}{p_1}+\frac{\sigma_a}{m})^{-1},(\frac{1}{p_1}+\frac{\sigma_a}{m})^{-1}}^{\sigma_a}L_y^{q_1}}^{\alpha-2}
\|u\|_{B_{x,(p_1+\frac{\vare}{m})^{-1},(p_1+\frac{\vare}{m})^{-1}}^{\vare}L_y^{q_1}}^{1-\frac{k}{s}}
\|u\|_{B_{x,(p_2+\frac{\vare}{m})^{-1},(p_2+\frac{\vare}{m})^{-1}}^{s+\vare}L_y^{q_2}}^{\frac{k}{s}}\\
\lesssim&\,
\|u\|_{ B_{x,(\frac{1}{p_1}+\frac{\sigma_a}{m})^{-1},(\frac{1}{p_1}+\frac{\sigma_a}{m})^{-1}}^{\sigma_a}L_y^{q_1}}^{\alpha-1-\frac{k}{s}}
\|u\|_{B_{x,(\frac{1}{p_2}+\frac{\sigma_b}{m})^{-1},(\frac{1}{p_2}+\frac{\sigma_b}{m})^{-1}}^{s+\sigma_b} L_y^{q_2}}^{\frac{k}{s}}
\end{align*}
and the desired estimate follows.
\end{proof}

\subsection{Strichartz estimates}
This subsection is devoted to the proof of the Strichartz estimates which will be used throughout the paper. We firstly record a useful discrete-type interpolation lemma.

\begin{lemma}\label{lem 3.1}
For $\kappa\in \R$ and $\vare\in(0,\infty)$ we have
\begin{align*}
\|f_M\|_{\ell_M^{\kappa,1}(2^{\N_0})}\lesssim
\|f_M\|_{\ell_M^{\kappa-\vare,\infty}(2^{\N_0})}^{\frac12}
\|f_M\|_{\ell_M^{\kappa+\vare,\infty}(2^{\N_0})}^{\frac12}.
\end{align*}
\end{lemma}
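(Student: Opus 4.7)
The statement is a weighted dyadic interpolation inequality, and the natural approach is a sharp splitting argument based on the two pointwise bounds available. Write $A := \|f_M\|_{\ell_M^{\kappa-\varepsilon,\infty}}$ and $B := \|f_M\|_{\ell_M^{\kappa+\varepsilon,\infty}}$; these supply the elementary bounds $M^\kappa|f_M|\le M^\varepsilon A$ and $M^\kappa|f_M|\le M^{-\varepsilon}B$ for every dyadic $M$. Either $A$ or $B$ being zero is trivial, so assume both are strictly positive.

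My plan is to find the optimal crossover scale $M^{*}$ at which these two bounds coincide. Setting $M^\varepsilon A = M^{-\varepsilon} B$ gives $M^{*}=(B/A)^{1/(2\varepsilon)}$. I would then split the sum $\sum_{M\in 2^{\mathbb{N}_0}} M^\kappa |f_M|$ into the two ranges $M\le M^{*}$ and $M>M^{*}$, applying $M^\kappa|f_M|\le M^\varepsilon A$ in the first range and $M^\kappa|f_M|\le M^{-\varepsilon} B$ in the second. Each piece is a geometric series in the dyadic variable, whose leading behavior is controlled by its last (respectively first) term:
\begin{align*}
\sum_{M\le M^{*}} M^\varepsilon A &\lesssim (M^{*})^\varepsilon A = (B/A)^{1/2} A = A^{1/2}B^{1/2}, \\
\sum_{M>M^{*}} M^{-\varepsilon} B &\lesssim (M^{*})^{-\varepsilon} B = (A/B)^{1/2} B = A^{1/2}B^{1/2}.
\end{align*}
Summing the two contributions yields the desired bound $\|f_M\|_{\ell_M^{\kappa,1}}\lesssim A^{1/2}B^{1/2}$, with an implicit constant depending only on $\varepsilon$ through the geometric ratio $2^{\varepsilon}$ and independent of $\kappa$.

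There is essentially no obstacle here: the proof is a one-line $\min(M^\varepsilon A, M^{-\varepsilon} B)$ argument, with the only minor technical point being that $M^{*}$ need not be a dyadic integer, so one simply replaces $M^{*}$ by the nearest dyadic number, which changes the constant by at most a factor of $2^\varepsilon$. It is worth remarking that the argument is the discrete dyadic analogue of the classical Hardy--Littlewood real interpolation identity $(L^{p_0,\infty},L^{p_1,\infty})_{\theta,1}=L^{p,1}$, which is precisely the structure reflected in the exponents appearing on both sides of the inequality.
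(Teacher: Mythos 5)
Your proof is correct and is essentially the paper's own argument: the paper also splits the dyadic sum at a crossover scale $J$ determined by the ratio of the two weak norms, bounds each half by a geometric series dominated by its extreme term, and obtains $A^{1/2}B^{1/2}$ (your $M^{*}=(B/A)^{1/(2\varepsilon)}$ is exactly the scale the paper's computation implicitly uses). No differences worth noting.
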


\begin{proof}
Let $J:=\|f_M\|_{\ell_M^{\kappa-\vare,\infty}(2^{\N_0})}^{-\frac12}
\|f_M\|_{\ell_M^{\kappa+\vare,\infty}(2^{\N_0})}^{\frac12}$. Then
\begin{align*}
\|f_M\|_{\ell_M^{\kappa,1}(2^{\N_0})}&=\sum_{M\in 2^{\N_0}} M^\kappa|f_M|=\sum_{M\lesssim J} M^\vare (M^{\kappa-\vare}|f_M|)
+\sum_{M\gtrsim J} M^{-\vare} (M^{\kappa+\vare}|f_M|)\\
&\lesssim J^\vare \|f_M\|_{\ell_M^{\kappa-\vare,\infty}(2^{\N_0})}+
J^{-\vare} \|f_M\|_{\ell_M^{\kappa+\vare,\infty}(2^{\N_0})}\\
&=\|f_M\|_{\ell_M^{\kappa-\vare,\infty}(2^{\N_0})}^{\frac12}
\|f_M\|_{\ell_M^{\kappa+\vare,\infty}(2^{\N_0})}^{\frac12}.
\end{align*}
This completes the proof.
\end{proof}

We now state the main Strichartz estimates.

\begin{lemma}\label{prop1}
Let $p\in[2+\frac{4}{m},\infty)$, $\vare\in[0,\frac{n}{2})$, $\sigma=\frac{m}{2}-\frac{m+2}{p}$ and $\alpha\in(0,\frac{1}{p})$. Moreover, let $(p,q)$ be an admissible Strichartz pair. Then for $N_x,N_y\in 2^{\N_0}$ we have
\begin{gather}
\|A^x_{N_x} A^y_{N_y} f\|_{B_{t,p,1}^\alpha L_x^p L_y^{(\frac12-\frac{\vare}{n})^{-1}}(\R\times\Omega)}
\lesssim N_x^{\sigma}N_y^{\vare} N_{\max}^{2\alpha}\|f\|_{Y^0},\label{3.1}\\
\|f\|_{L_t^p L_x^q L_y^2(\R\times\Omega)}
\lesssim \|f\|_{Y^0},\label{3.1+}
\end{gather}
where $A^x_{N_x}\in\{ P_{N_x}^x, P_{<N_x}^x\}$, $A^y_{N_x}\in\{ P_{N_y}^y, P_{<N_y}^y\}$ and $N_{\max}:=\max\{N_x,N_y\}$.
\end{lemma}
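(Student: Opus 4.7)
The plan is to establish both estimates by starting from the Tzvetkov-Visciglia Strichartz estimate \eqref{example strichartz}, then upgrading it successively: first by adding spatial frequency localization via Bernstein in $x$ and $y$, next by adding temporal Besov regularity through a space-time Fourier-support argument for free solutions, and finally by transferring from free solutions to general $Y^0$ functions via the standard $U^p_{\Delta_{x,y}}$-atomic machinery together with the embedding chain in Lemma \ref{embedding lem}.

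For the standard Strichartz estimate \eqref{3.1+}, it suffices to combine \eqref{example strichartz} with the atomic transfer principle. Since $L^p_t L^q_x L^2_y$ is $L^p$-based in $t$, any $U^p_{\Delta_{x,y}}$-atom $\sum_k \chi_{I_k}(t)e^{it\Delta_{x,y}}\phi_k$ satisfies the estimate by subadditivity in $t$, giving $\|u\|_{L^p_tL^q_xL^2_y}\lesssim\|u\|_{U^p_{\Delta_{x,y}}L^2}$. Lemma \ref{embedding lem} then yields $Y^0\hookrightarrow U^p_{\Delta_{x,y}}L^2$ for $p>2$, and summing the almost-orthogonal pieces indexed by the cubes $C_z$ produces \eqref{3.1+}.

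To prove \eqref{3.1} I would first work with free solutions $f=e^{it\Delta_{x,y}}g$, where $g=A^x_{N_x}A^y_{N_y}g$. The identity $m(q^{-1}-p^{-1})=\sigma$, which follows from the admissibility relation $\frac{2}{p}+\frac{m}{q}=\frac{m}{2}$, allows Bernstein in $x$ to convert $L^q_x$ to $L^p_x$ at cost $N_x^\sigma$, while Bernstein in $y$ converts $L^2_y$ to $L_y^{(1/2-\vare/n)^{-1}}$ at cost $N_y^\vare$. Combining these with \eqref{example strichartz} produces the base bound
\begin{align*}
\|e^{it\Delta_{x,y}}g\|_{L^p_tL^p_xL_y^{(1/2-\vare/n)^{-1}}}\lesssim N_x^\sigma N_y^\vare\|g\|_{L^2}.
\end{align*}
The temporal Besov gain then comes from the fact that the space-time Fourier support of $e^{it\Delta_{x,y}}g$ is contained in $\{|\tau|\lesssim N_{\max}^2\}$, so $P^t_M(e^{it\Delta_{x,y}}g)\equiv 0$ whenever $M\gg N_{\max}^2$. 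The $B^\alpha_{t,p,1}$-norm therefore collapses to a geometric sum $\sum_{M\lesssim N_{\max}^2}M^\alpha(N_x^\sigma N_y^\vare\|g\|_{L^2})\lesssim N_{\max}^{2\alpha}N_x^\sigma N_y^\vare\|g\|_{L^2}$, settling \eqref{3.1} for free solutions.

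The last step is to upgrade \eqref{3.1} from free solutions to arbitrary $f\in Y^0$. Here I would use Lemma \ref{lem 3.1} to rewrite the $\ell^{\alpha,1}_M$-Besov sum as a geometric mean of $\ell^{\alpha\pm\vare',\infty}_M$ quantities for some small $\vare'>0$ with $\alpha+\vare'<1/p$. Each $\ell^\infty_M$-bound is then produced by interpolating between the trivial Mikhlin multiplier estimate in $t$ (independent of $M$) and the free-solution bound from the previous paragraph; the resulting inequality is $L^p$-based in time, hence it transfers to $U^p_{\Delta_{x,y}}$-atoms by subadditivity and then to $Y^0$ via Lemma \ref{embedding lem}. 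The restriction $\alpha<1/p$ secures summability of the boundary contributions coming from the atomic jumps. I expect this transfer step to be the main obstacle, since Besov-in-time regularity does not interact cleanly with $U^p$-atomic subadditivity, and Lemma \ref{lem 3.1} is the precise device that sidesteps the obstruction by reducing the $\ell^1$-Besov sum to two $\ell^\infty$-type bounds both compatible with the atomic framework.
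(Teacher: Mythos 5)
Your treatment of \eqref{3.1+} (transfer principle for the $L^p$-based norm plus $Y^0\hookrightarrow U^p_{\Delta_{x,y}}L^2_{x,y}$) and of the free-solution case of \eqref{3.1} is correct, and your observation that $P^t_M(e^{it\Delta_{x,y}}g)$ vanishes outright for $M\gg N_{\max}^2$ is a legitimate shortcut: the paper instead uses the identity $(i\pt_t)^kP^t_M e^{it\Delta_{x,y}}=P^t_M((-\Delta_{x,y})^ke^{it\Delta_{x,y}})$, interpolation in $k$, and the discrete interpolation Lemma \ref{lem 3.1} to arrive at the same bound \eqref{3.3}. Re-deriving the frequency-localized Strichartz estimate \eqref{strichartz2} from \eqref{example strichartz} via (vector-valued) Bernstein in $x$ and $y$ is also fine.

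The genuine gap is in the last step, the transfer from free solutions to $Y^0$, which is the actual content of the lemma. A $U^p_{\Delta_{x,y}}$-atom has the form $\sum_k\chi_{I_k}(t)e^{it\Delta_{x,y}}\phi_k$, and the temporal projector $P^t_M$ does not commute with multiplication by the sharp cutoffs $\chi_{I_k}$: the quantity $\|P^t_M u\|_{L^p_tL^p_xL^{(\frac12-\frac{\vare}{n})^{-1}}_y}$ is therefore \emph{not} subadditive over the disjointly supported pieces, so your claim that the fixed-$M$ estimate ``is $L^p$-based in time, hence transfers to atoms by subadditivity'' fails at exactly the critical point. Moreover, the vanishing of $P^t_M$ for $M\gg N_{\max}^2$ is destroyed by the cutoffs, whose jumps create temporal tails decaying only like $M^{-1/p}$ in $L^p_t$; this is precisely where the restriction $\alpha<\frac1p$ enters, and controlling these contributions uniformly over the (possibly many) atom pieces with the correct $\ell^p_k$ dependence on $\|\phi_k\|_{L^2}$ requires a genuine estimate for Besov norms of sums of sharply cut-off functions. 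In the paper this is supplied by \eqref{3.5}, quoted from \cite[Lem.~3.1]{KwakKwon}, namely $\|\sum_j f_j\chi_{I_j}\|_{B^\alpha_{t,p,\infty}E}\lesssim(\sum_j\|f_j\|^p_{B^\alpha_{t,p,1}E})^{1/p}$, followed by real interpolation (Lemma \ref{lemallprop} (vi)) to recover the third index $1$ from $\infty$, density of atoms in $U^p_{\Delta_{x,y}}$, and the embedding $Y^0\hookrightarrow U^p_{\Delta_{x,y}}L^2_{x,y}$. Your proposal assigns this role to Lemma \ref{lem 3.1} of the paper, but that lemma is only the elementary interpolation $\ell^{\kappa,1}_M\lesssim(\ell^{\kappa-\vare,\infty}_M)^{1/2}(\ell^{\kappa+\vare,\infty}_M)^{1/2}$ and says nothing about sharp time truncations; interpolating the single-$M$ bound against a Mikhlin multiplier bound likewise does not touch the cutoff problem. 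Without an argument of the type \eqref{3.5}, the passage from \eqref{3.3} to \eqref{3.1} is not established.
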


\begin{proof}
We only prove the estimate \eqref{3.1} in the case $A^x_{N_x}=P^x_{N_x}$ and $A^y_{N_y}=P^y_{N_y}$, the other cases can be estimated similarly. Recall first the following Strichartz estimate on $\R^m\times\T^n$ (see \cite{TNCommPDE})
\begin{align}
\|P^x_{N_x}\py e^{it\Delta_{x,y}}\phi\|_{L_{t,x}^p L_y^{(\frac{1}{2}-\frac{\vare}{n})^{-1}}(\R\times\Omega)}
&\lesssim N_x^\sigma N_y^{\vare}\|\phi\|_{L_{x,y}^2}\label{strichartz2},\\
\|e^{it\Delta_{x,y}}\phi\|_{L_t^p L_x^q L_y^2}&\lesssim \|\phi\|_{L_{x,y}^2}\label{strichartz1}.
\end{align}
\eqref{3.1+} already follows from \eqref{strichartz1} and the transfer principle (\cite[Prop. 2.19]{HadacHerrKoch2009}). Next, using Bernstein, the identity
\begin{align*}
(i\pt_t)^k P_M^t(e^{it\Delta_{x,y}}f)=P_M^t((-\Delta_{x,y}^k)e^{it\Delta_{x,y}} f)
\end{align*}
and \eqref{strichartz2} we know that
\begin{align}\label{3.2}
\|P_M^t(\px\py e^{it\Delta_{x,y}}\phi)\|_{\ell_{M}^{k,\infty}L_{t,x}^p L_y^{(\frac12-\frac{\vare}{n})^{-1}}(2^{\N_0}\times\R\times\Omega)}
\lesssim N_x^{\sigma}N_y^{\vare} N_{\max}^{2k}\|\phi\|_{L_{x,y}^2}
\end{align}
holds for any $k\in\N_0$, which in fact holds for any $k\in[0,\infty)$ by interpolation. Using Minkowski, \eqref{3.2} and Lemma \ref{lem 3.1} it follows
\begin{equation}\label{3.3}
\begin{aligned}
\|\px\py e^{it\Delta_{x,y}}\phi\|_{B_{t,p,1}^\kappa L_{x}^pL_y^{(\frac12-\frac{\vare}{n})^{-1}}(\R\times\Omega)}
&=
\|P_M^t(\px\py e^{it\Delta_{x,y}}\phi)\|_{\ell_{M}^{\kappa,1} L_{t,x}^p L_y^{(\frac12-\frac{\vare}{n})^{-1}}(2^{\N_0}\times\R\times\Omega)}\\
&\lesssim N_x^{\sigma}N_y^{\vare} N_{\max}^{2\kappa}\|\phi\|_{L_{x,y}^2}
\end{aligned}
\end{equation}
for any $\kappa\in(0,\infty)$. Next, we recall from \cite[Lem. 3.1]{KwakKwon} that for $I=\cup_{j=1}^n I_j$ with $I_j$ consecutive disjoint intervals and any Banach space $E$ it holds
\begin{align}\label{3.5}
\|\sum_{j=1}^n f_j\chi_{I_j} \|_{B_{t,p,\infty}^\alpha E}\lesssim (\sum_{j=1}^n \|f_j\|_{B_{t,p,1}^\alpha E}^p )^{\frac{1}{p}},
\end{align}
where $\chi_{I_j}$ is the sharp cut-off function of $I_j$ and the numerical constant does not depend on the choice of $I_j$. \eqref{3.5}, Minkowski and \eqref{3.3} then yield
\begin{equation}\label{3.6}
\begin{aligned}
&\,\|\sum_{j=1}^n \px\py e^{it\Delta_{x,y}}\phi_j\chi_{I_j} \|_{B_{t,p,\infty}^\alpha  L_{x}^p (\R\times\Omega)}\\
\lesssim&\, \|\px\py e^{it\Delta_{x,y}}\phi_j \|_{\ell_j^p B_{t,p,1}^\alpha  L_{x}^pL_y^{(\frac12-\frac{\vare}{n})^{-1}}(\{1\leq j\leq n\}\times\R\times\Omega)}\\
\lesssim&\,  N_x^{\sigma}N_y^{\vare}N_{\max}^{2\alpha}(\sum_{j=1}^n \|\phi_j\|_{L_{x,y}^2}^p)^{\frac{1}{p}}.
\end{aligned}
\end{equation}
Then \eqref{3.1} follows from \eqref{3.6}, Lemma \ref{lemallprop} (vi), the density of the set of atomic functions in $U_\Delta^p$-space, and the embedding $Y^0\hookrightarrow U_\Delta^p L_{x,y}^2$.
\end{proof}

\subsection{The Galilean transformation}
In the final preliminary subsection, we introduce the Galilean transformation which leaves the NLS-flow invariant. Such invariant property of the Galilean transformation will also play a fundamental role in establishing the multilinear estimates, see Section \ref{sec 5} for details.

For $\xi=(\xi_x,\xi_y)\in\R^m\times\Z^n$ and $z=(x,y)\in\R^{m}\times\T^n$ we define the Galilean transformation $I_\xi$ by
\begin{align*}
I_{\xi}u(t,z)=e^{iz\cdot\xi-it|\xi|^2}u(t,z-2t\xi).
\end{align*}
The following properties of the Galilean transformation can be easily verified by using its definition, see also \cite[Prop. 2.16]{KwakKwon}.

\begin{lemma}\label{lem 3.8}
For any $\xi,\xi_1,\xi_2\in \R^m\times\Z^n$ we have the following properties of the Galilean transformation.
\begin{itemize}
\item $(i\pt_t +\Delta_{x,y})I_\xi u=I_\xi (i\pt_t +\Delta_{x,y}) u$.
\item $I_{\xi_1+\xi_2}u=I_{\xi_1} I_{\xi_2} u$.
\item For any set $C\subset \R^m\times\Z^n$ we have $P_{C+\xi}I_\xi u= I_\xi P_C u$.
\item $\|I_\xi u\|_{Y^0}=\|u\|_{Y^0}$.
\end{itemize}
\end{lemma}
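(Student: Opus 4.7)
All four assertions follow from the defining formula $I_\xi u(t,z) = e^{iz\cdot\xi - it|\xi|^2}u(t, z-2t\xi)$, and my plan is to verify them in order, with (iv) being the only non-trivial one.

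For (i), I would set $v := I_\xi u$ and compute $\pt_t v$ and $\Delta_{x,y} v$ via the product rule. The phase factor contributes a $-i|\xi|^2 v$ under $\pt_t$ and a $-|\xi|^2 v$ together with a cross term $2i\xi\cdot e^{iz\cdot\xi - it|\xi|^2}(\nabla u)(t, z-2t\xi)$ under $\Delta_{x,y}$, while differentiating the translated argument also yields $-2\xi\cdot e^{iz\cdot\xi - it|\xi|^2}(\nabla u)(t,z-2t\xi)$ under $\pt_t$. Forming $(i\pt_t+\Delta_{x,y}) v$, the $|\xi|^2 v$ terms cancel and so do the two first-order pieces in $\xi\cdot\nabla u$, leaving exactly $I_\xi(i\pt_t + \Delta_{x,y})u$. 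Property (ii) follows by direct substitution and the identity $|\xi_1|^2 + 2\xi_1\cdot\xi_2 + |\xi_2|^2 = |\xi_1+\xi_2|^2$, while for (iii) I would compute the full spatial Fourier transform; the substitution $w=z-2t\xi$ yields
\[
\mathcal F(I_\xi u)(t,\zeta) = e^{it(|\xi|^2 - 2\xi\cdot\zeta)}\,\widehat u(t,\zeta-\xi),
\]
so the frequency support is shifted by $\xi$, and hence $P_{C+\xi} I_\xi u = I_\xi P_C u$ for any measurable $C\subset \R^m\times\Z^n$.

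For (iv), the main obstacle, my first step would be to extract from the Fourier identity in (iii) the key reformulation $e^{-it\Delta_{x,y}}(I_\xi u)(t,z) = e^{iz\cdot\xi}\,(e^{-it\Delta_{x,y}}u)(t,z)$; that is, in the free-evolution frame $I_\xi$ becomes pure multiplication by the unimodular and $t$-independent phase $e^{iz\cdot\xi}$. Consequently, each $L^2_{x,y}$-difference appearing in the definition of the $V^2_{\Delta_{x,y}}L^2_{x,y}$-norm of $I_\xi u$ equals the corresponding difference for $u$, and taking the supremum over partitions yields the exact isometry $\|I_\xi v\|_{V^2_{\Delta_{x,y}} L^2_{x,y}} = \|v\|_{V^2_{\Delta_{x,y}} L^2_{x,y}}$. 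Combined with (iii), this gives, for every $z\in\Z^d$, the identity $\|P_{C_z}I_\xi u\|_{V^2_{\Delta_{x,y}} L^2_{x,y}} = \|P_{C_z-\xi}u\|_{V^2_{\Delta_{x,y}} L^2_{x,y}}$.

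What remains is to compare $\sum_{z\in\Z^d}\|P_{C_z-\xi}u\|^2_{V^2_{\Delta_{x,y}} L^2_{x,y}}$ with $\|u\|^2_{Y^0}$. Since $\xi_y\in\Z^n$, the $y$-component of the shift is a pure relabeling of the lattice; the substantive point is $\xi_x\in\R^m$, which may displace the Euclidean tiling off the integer lattice. My plan here is to exploit two facts: the frequency projector $P_E$ on any measurable $E$ is a contraction on $V^2_{\Delta_{x,y}}L^2_{x,y}$ (inherited from its $L^2_{x,y}$-boundedness applied difference-by-difference), and each cube in either tiling meets at most $2^m$ cubes of the other. Decomposing $P_{C_z-\xi}u = \sum_w P_{(C_z-\xi)\cap C_w}u$, applying the triangle inequality in $V^2_{\Delta_{x,y}}L^2_{x,y}$ together with $(a_1+\dots+a_{2^m})^2 \leq 2^m(a_1^2+\dots+a_{2^m}^2)$, and then swapping the order of summation yields the two-sided bound $\|I_\xi u\|_{Y^0}\sim\|u\|_{Y^0}$, from which the lemma follows (under the natural translation-invariant convention for the $Y^0$-norm).
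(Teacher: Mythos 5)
Your verification is correct, and it is exactly the direct computation the paper leaves to the reader (the paper gives no proof, pointing instead to \cite[Prop.~2.16]{KwakKwon}): the heart of the matter is the conjugation identity $e^{-it\Delta_{x,y}}(I_\xi u)(t,z)=e^{iz\cdot\xi}\,(e^{-it\Delta_{x,y}}u)(t,z)$, which makes $I_\xi$ a time-independent unimodular multiplier in the free frame and hence a $V^2_{\Delta_{x,y}}L^2_{x,y}$-isometry, combined with the frequency-shift property $P_{C+\xi}I_\xi=I_\xi P_C$. One remark on the only delicate point, which you spotted: for $\xi_x\notin\Z^m$ the shifted cube decomposition gives only the two-sided bound $\|I_\xi u\|_{Y^0}\sim\|u\|_{Y^0}$ with a dimensional constant (exact equality can fail, since $V^2$ has no Pythagorean identity when a cube's content is split between two shifted cubes), so the literal ``$=$'' in the lemma is only exact for lattice shifts $\xi\in\Z^m\times\Z^n$; this is harmless here because every application in the paper (Lemma \ref{lem 4.1} and the Regime II estimate, where $\xi=Rk$ or $2Rk$ with $k\in\Z^d$ and $R$ dyadic) uses precisely such lattice shifts, for which your argument does give equality by pure relabeling.
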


\section{The $Z^s$-space}
For $s\in\R$ we define the $Z^s$-norm by
\begin{equation}\label{eq 4.1}
\begin{aligned}
\|u\|_{Z^s}&:=\max_{p\in\max\{m^*,\frac{1}{\sigma_1}\}}\max_{\alpha\in\max\{\sigma,\frac{1}{p}-\sigma\}}\bg(\|P_R u\|_{\ell_R^{s,2}L_t^p L_x^q L_y^2}
+\|P_R u\|_{\ell_R^{s-2\alpha,2}B_{t,p,2}^{\alpha} L_x^q L_y^2}\\
&\qquad+\|\max_{R\ll N}R^{-4\sigma}\|P_{\leq 8R}I_{Rk} P_N u\|_{\ell_k^2 B_{t,p,2}^{2\sigma} L_x^{q} L_y^{2}}\|_{\ell^{s,2}_N}\bg),
\end{aligned}
\end{equation}
where
\begin{align}\label{strichartz condition}
m^*=\left\{
\begin{array}{ll}
(\frac{1}{4}-\sigma)^{-1},&\text{when $m=1$,}\\
(\frac{1}{2}-\sigma)^{-1},&\text{when $m=2$,}\\
2,&\text{when $m\geq 3$}
\end{array}
\right.
\end{align}
and $(p,q)$ is an admissible Strichartz pair. The spaces $Y^s$ and $Z^s$ will be the main function spaces for establishing the small scattering results. The following lemma reveals the embedding relationship between the both spaces.

\begin{lemma}\label{lem 4.1}
For any $s\in\R$ we have $Y^s\hookrightarrow Z^s$.
\end{lemma}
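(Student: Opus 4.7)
The plan is to bound each of the three summands in the definition \eqref{eq 4.1} of $\|u\|_{Z^s}$ separately by $\|u\|_{Y^s}$, in each case reducing matters to the Strichartz content of Lemma \ref{prop1} combined with the standard almost-orthogonal equivalence $\|u\|_{Y^s}^2\sim\sum_N N^{2s}\|P_N u\|_{Y^0}^2$ that follows from the cube decomposition defining $Y^s$.

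The preparatory step, shared by the second and third summands, is an upgrade of the Strichartz bound \eqref{3.1+} to a Besov-in-time estimate at the $L_x^qL_y^2$ endpoint: for any $v$ whose spatial Fourier support lies in $\{|\xi|\lesssim R\}$, any $p\in\{m^*,\sigma^{-1}\}$ and any $\alpha>0$,
\[
\|v\|_{B^{\alpha}_{t,p,1}L_x^qL_y^2}\lesssim R^{2\alpha}\|v\|_{Y^0}.
\]
This is proved in parallel to Lemma \ref{prop1}: start from \eqref{3.1+}, use the commutation $(i\pt_t)^k P_M^t e^{it\Delta_{x,y}}\phi=P_M^t(-\Delta_{x,y})^k e^{it\Delta_{x,y}}\phi$ together with the Bernstein bound $\|(-\Delta_{x,y})^k\phi\|_{L^2}\lesssim R^{2k}\|\phi\|_{L^2}$ valid under the frequency localization, interpolate via Lemma \ref{lem 3.1}, and transfer the resulting estimate from $e^{it\Delta_{x,y}}\phi$ to general $v\in Y^0$ using the cut-off identity \eqref{3.5} and the density of $U^p_\Delta$-atoms in $Y^0$.

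With this in hand, the first summand follows immediately from \eqref{3.1+}, since $\|P_R u\|_{\ell^{s,2}_R L_t^p L_x^q L_y^2}\lesssim\|P_R u\|_{\ell^{s,2}_R Y^0}\sim\|u\|_{Y^s}$. For the second summand, apply the preparatory estimate with $R=N$, $v=P_N u$ and $\alpha\in\{\sigma,\sigma^{-1}\}$ to obtain $\|P_N u\|_{B^\alpha_{t,p,2}L_x^qL_y^2}\lesssim N^{2\alpha}\|P_N u\|_{Y^0}$ (using $B^\alpha_{t,p,1}\hookrightarrow B^\alpha_{t,p,2}$); the factor $N^{2\alpha}$ exactly cancels the weight $N^{s-2\alpha}$ in $\ell^{s-2\alpha,2}_R$, yielding $\|u\|_{Y^s}$ again. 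The third (Galilean) summand is the crux. The key observation is that $P_{\leq 8R}I_{Rk}P_N u$ has spatial Fourier support in $\{|\xi|\le 8R\}$, so the preparatory estimate applies with frequency scale $R$ and produces exactly the prefactor $R^{4\sigma}$ needed to absorb the $R^{-4\sigma}$. The Galilean identities of Lemma \ref{lem 3.8}, namely $\|I_\xi\,\cdot\,\|_{Y^0}=\|\,\cdot\,\|_{Y^0}$ and $P_C I_\xi = I_\xi P_{C-\xi}$, identify $\|P_{\leq 8R}I_{Rk}P_N u\|_{Y^0}$ with $\|P_{B(-Rk,8R)}P_N u\|_{Y^0}$; since the balls $B(-Rk,8R)$ with $k\in\Z^d$ have uniformly bounded overlap, the almost-orthogonality of $Y^0$ yields
\[
\sum_k \|P_{B(-Rk,8R)}P_N u\|_{Y^0}^2\lesssim \|P_N u\|_{Y^0}^2,
\]
uniformly in $R\ll N$. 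Taking $\max_R$ and then the outer $\ell^{s,2}_N$ reassembles $\|u\|_{Y^s}$.

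The main technical point is the third summand: one must combine a derivative-counting at the correct frequency scale in the frequency-localized Besov–Strichartz estimate (the cutoff $P_{\leq 8R}$ guarantees that the gain is governed by $R$, not by $N$) with the almost-orthogonality of the Galilean-shifted frequency balls in $Y^0$, while keeping the $\ell^2_k$-norm outside the Besov norm. Once those two ingredients are in place, the remainder is a bookkeeping of dyadic weights.
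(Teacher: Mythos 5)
Your proposal is correct and follows essentially the same route as the paper: the first two summands are handled by the (frequency-localized) Strichartz--Besov estimates of Lemma \ref{prop1} with the gain $N^{2\alpha}$ cancelling the weight, and the Galilean summand by applying that estimate at scale $R$ (gain $R^{4\sigma}$), then using Lemma \ref{lem 3.8} to commute $P_{\leq 8R}$ with $I_{Rk}$ and the bounded overlap of the shifted cubes $-Rk+[-8R,8R]^d$ to get $\ell^2_k$ almost-orthogonality in $Y^0$. Your explicit preparatory $L_x^qL_y^2$-valued Besov estimate is exactly the variant of \eqref{3.1} the paper invokes implicitly, proved by the same Bernstein-in-time, interpolation and atomic-transfer argument.
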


\begin{proof}
By Lemma \ref{prop1} we infer that for any $\alpha\in(0,\infty)$ and admissible Strichartz pair $(p,q)$ we have
\begin{equation*}
\begin{aligned}
\|P_R u\|_{\ell_R^{s,2} L_t^p L_x^q L_y^2}+
\|P_R u\|_{\ell_R^{s-2\alpha,2}B_{t,p,2}^{\alpha} L_x^q L_y^2}&\lesssim\|P_R u\|_{\ell_R^{s,2}Y^0}=\|P_R u\|_{Y^s},
\end{aligned}
\end{equation*}
which implies the estimate for the first norm. For the second norm, we use \eqref{3.1} and Lemma \ref{lem 3.8} to obtain
\begin{equation*}
\begin{aligned}
&\,\|\max_{R\ll N}R^{-4\sigma}\|P_{\leq 8R}I_{Rk} P_N u\|_{\ell_k^2 B_{t,p,2}^{2\sigma} L_x^{q} L_y^{2}}\|_{\ell^{s,2}_N}\\
\lesssim&\,\|\max_{R\ll N}\|P_{\leq 8R}I_{Rk} P_N u\|_{\ell_k^2 Y^0}\|_{\ell^{s,2}_N}\\
=&\,
\|\max_{R\ll N}\|I_{Rk} P_{-Rk+[-8R,8R]^d} P_N u\|_{\ell_k^2 Y^0}\|_{\ell^{s,2}_N}\\
=&\,\|\max_{R\ll N}\|P_{-Rk+[-8R,8R]^d} P_N u\|_{\ell_k^2 Y^0}\|_{\ell^{s,2}_N}\\
\lesssim&\, \|P_N u\|_{\ell^{s,2}_NY^0}\sim \|u\|_{Y^s}.
\end{aligned}
\end{equation*}
This completes the proof.
\end{proof}

In the rest of the paper, we fix the number $s$ to be $s=s_c=\frac{d}{2}-\frac{2}{\alpha}$.

\section{Multilinear estimates}\label{sec 5}
In this section we aim to estimate the dual norm $\|v^* |u|^{\alpha-k} u^k\|_{(Z^0)'}$ with $v^*\in\{v,\bar v\}$ and $m\in\Z$. This will be done by considering an appropriate estimate of the integral $\int A|u|^2\,dxdydt$. By dyadic decomposition and symmetry of $u$ and $\bar u$ it suffices to consider
\begin{itemize}
\item[(I)]  $\sum_{R\lesssim N\sim  M}\int P_R u \,P_N \bar u \, P_M A\,dxdydt$ and
\item[(II)] $\sum_{R\ll N\sim  M}\int P_R A\,P_N u \,P_M \bar u  \,dxdydt$.
\end{itemize}
The rest of this section is devoted to estimating the terms I and II.

\subsection{Estimation in Regime I}
\begin{lemma}\label{est_lem_1}
For any $\sigma>0$ we have
\begin{equation*}
\begin{aligned}
&\,\sum_{R\lesssim N\sim  M}\int P_R u \,P_N \bar u \, P_M A\,dxdydt\\
\lesssim &\,\|u\|_{Z^0}^2 (\|A\|_{L_t^{(p_0/2)'} B_{x,(\frac{2}{m+2}+\frac{\sigma}{m})^{-1},\infty}^{2\sigma} L_y^{\frac{n}{\sigma}}}
+
\|A\|_{L_{t,x}^{(p_0/2)'}B_{y,\frac{n}{2\sigma},\infty}^{2\sigma}}).
\end{aligned}
\end{equation*}
\end{lemma}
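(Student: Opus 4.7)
The plan is to combine the low-mid-high Besov-Hölder inequality of Kwak and Kwon (Lemma \ref{kwak_lem_lhh}) with the anisotropic frequency splitting \eqref{2.1}. First, I would decompose the highest frequency factor via \eqref{2.1}:
\[P_M A = P_M^x P_{\leq M}^y A + P_{\leq M/2}^x P_M^y A,\]
partitioning the sum into two regimes according to whether the high frequency of $A$ lives in the Euclidean direction $x$ or in the periodic direction $y$. In each regime, after using Fourier orthogonality to check that $P_N\bar u$ only contributes through its directional piece matching $A$, I would apply Lemma \ref{kwak_lem_lhh} in the appropriate variable.

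For the $x$-regime (which must recover the first summand, with $P=(\tfrac{2}{m+2}+\tfrac{\sigma}{m})^{-1}$), I would apply Lemma \ref{kwak_lem_lhh} with $k=m$. The critical choice is to assign \emph{negative} Besov regularities $s_1=s_2=-\sigma/2$ to the two $u$-factors and $s_3=2\sigma$ to $A$. With this choice the LHH Lebesgue budget $\tfrac{1}{p_1}+\tfrac{1}{p_2}+\tfrac{1}{p_3}=1+\tfrac{\sigma}{m}$ is satisfied by $p_1=p_2=p_0$ and $p_3=P$, which matches exactly the $L_x^{p_0}$ output of the mass-critical admissible Strichartz pair (accessible from the $Z^0$-norm by interpolation between its two admissible endpoints $p\in\{m^*,1/\sigma\}$, within whose range $p_0$ always lies). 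The auxiliary Banach spaces are chosen as $E_1=E_2=L_t^{p_0}L_y^{r}$ with $r=(\tfrac{1}{2}-\tfrac{\sigma}{2n})^{-1}$ and $E_3=L_t^{(p_0/2)'}L_y^{n/\sigma}$, so that both the time Hölder $\tfrac{2}{p_0}+\tfrac{1}{(p_0/2)'}=1$ and the $y$-Hölder $\tfrac{2}{r}+\tfrac{\sigma}{n}=1$ balance perfectly. The required $L_y^{r}$ norm on the $u$-factors is recovered from the $L_y^{2}$ control built into the $Z^0$-norm by a $y$-Bernstein step at total frequency scales $R$ and $N$, at costs $R^{\sigma/2}$ and $N^{\sigma/2}$ respectively; these costs are absorbed by the dyadic convergence of Lemma \ref{kwak_lem_lhh}, thanks to the negative regularities $s_1,s_2=-\sigma/2$ and the embedding $L^{p_0}\hookrightarrow B_{p_0,2}^{-\sigma/2}$ following from Lemma \ref{lemallprop}~(i) together with dyadic Besov monotonicity.

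For the $y$-regime (which must recover the second summand), I would apply Lemma \ref{kwak_lem_lhh} with $k=n$. Here the simple choice $s_1=s_2=0$, $s_3=2\sigma$, $p_1=p_2=2$, $p_3=n/(2\sigma)$ already satisfies every LHH constraint and matches the $L_y^{2}$ output of the $Z^0$-norm directly, with $E_1=E_2=L_{t,x}^{p_0}$ and $E_3=L_{t,x}^{(p_0/2)'}$ balancing via plain Hölder in $(t,x)$; no Bernstein or negative regularity is required. The principal obstacle in the whole argument is therefore the $x$-regime, where the anisotropic mismatch between the product-space Strichartz estimate $L_{t,x}^{p_0}L_y^{2}$ and the LHH Lebesgue constraints in $x$ forces the simultaneous use of a small negative Besov regularity on the $u$-factors and a $y$-Bernstein step; careful dyadic bookkeeping is needed to ensure that the Bernstein costs are absorbed and the resulting multilinear sum converges to the claimed bound.
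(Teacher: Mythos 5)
Your route through Lemma \ref{kwak_lem_lhh} is genuinely different from the paper's argument, and in the $x$-regime it contains a gap that I do not see how to close. The losses and the gains live in different directions. The $y$-Bernstein step that upgrades the $Z^0$-controlled norm $L_{t,x}^{p_0}L_y^2$ to $L_t^{p_0}L_y^{(\frac12-\frac{\sigma}{2n})^{-1}}$ costs a power of the \emph{$y$-frequency} (equivalently, of the total frequency) of each $u$-factor, whereas the negative regularities $s_1=s_2=-\sigma/2$ in Lemma \ref{kwak_lem_lhh} produce decay only in the \emph{$x$-frequencies}, and on $\R^m\times\T^n$ these are independent. A piece of $u$ with $x$-frequency $O(1)$ and $y$-frequency $N$ incurs the full loss $N^{\sigma/2}$ and receives no compensating gain; since the $\ell_R^{0,2}$ structure of the $Z^0$-norm has no room in the frequency weight, the embedding you implicitly need, $\|u\|_{B^{-\sigma/2}_{x,p_0,q}L_t^{p_0}L_y^{(\frac12-\frac{\sigma}{2n})^{-1}}}\lesssim\|u\|_{Z^0}$, is false. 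Nor is this configuration excluded by your orthogonality reduction: with $R\sim N\sim M$, the $x$-frequency $\sim M$ of $P_M^xP_{\leq M}^yA$ can be balanced by $P_Ru$, leaving $P_N\bar u$ with essentially all of its frequency in $y$. Note also that in your scheme the $2\sigma$ of regularity carried by $A$ is an $x$-Besov regularity already spent matching the target norm $B^{2\sigma}_{x,\cdot,\infty}$, so nothing is left to pay the Bernstein costs. A secondary issue: Lemma \ref{kwak_lem_lhh} bounds a sum ordered by the frequencies in the single variable $z$, while Regime I is ordered by total frequencies; your appeal to ``Fourier orthogonality'' does not by itself convert one ordering into the other (again because when $R\sim N$ the low total-frequency factor may carry the high $x$-frequency), and a similar bookkeeping problem appears in your $y$-regime, where with $s_1=s_2=0$ there is no room to pass from the outer $B^0_{y,2,2}$ square function back to the $\ell_R^2$ (total-frequency) structure of $Z^0$.

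The paper's proof avoids all of this and is more elementary: H\"older with $L_{t,x}^{p_0}\cdot L_{t,x}^{p_0}\cdot L_{t,x}^{(p_0/2)'}$ and $L_y^{(\frac12-\frac{\sigma}{n})^{-1}}\cdot L_y^{(\frac12-\frac{\sigma}{n})^{-1}}\cdot L_y^{\frac n{2\sigma}}$, then $y$-Bernstein on the two $u$-factors at cost $R^{\sigma}N^{\sigma}$, which is paid by writing $\|P_MA\|\leq M^{-2\sigma}\cdot M^{2\sigma}\|P_MA\|$ and converting the weight $M^{2\sigma}$ at the \emph{total} frequency $M\sim N$ into either an $x$- or a $y$-Besov norm of $A$ via the splitting \eqref{2.1}; the leftover factor $(R/N)^{\sigma}$ gives summability by Schur's test, and the remaining $\ell_R^2 L_{t,x}^{p_0}L_y^2$ norm is controlled by $Z^0$ (by interpolating the two admissible pairs in \eqref{eq 4.1}, as you correctly observe). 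The essential point your proposal misses is that the $2\sigma$ of regularity on $A$ must be harvested at the total frequency $M\sim N$ precisely so that it can absorb the $y$-direction Bernstein losses of \emph{both} $u$-factors; Lemma \ref{kwak_lem_lhh} is reserved in the paper for Regime II and the main multilinear estimate, not for this lemma.
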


\begin{proof}
Using H\"older it follows
\begin{align*}
&\,\sum_{R\lesssim N\sim  M} \int P_R u \,P_N \bar u \, P_M A\,dxdydt\\
\lesssim&\, \sum_{R\lesssim N\sim  M} \|P_R u\|_{L_{t,x}^{p_0}L_y^{(\frac12-\frac{\sigma}{n})^{-1}}}
\|P_N u\|_{L_{t,x}^{p_0}L_y^{(\frac12-\frac{\sigma}{n})^{-1}}}\|P_M A\|_{L_{t,x}^{p_0}L_y^{\frac{n}{2\sigma}}}\\
\lesssim&\, \sum_{R\lesssim N\sim  M} (R/N)^\sigma R^{-\sigma}\|P_R u\|_{L_{t,x}^{p_0}L_y^2}
N^{-\sigma}\|P_N u\|_{L_{t,x}^{p_0}L_y^2}\\
&\qquad\qquad\times\left(M^{2\sigma}(\|P_M^x P_{<M}^y A\|_{L_{t,x}^{p_0}L_y^{\frac{n}{2\sigma}}}
+\|P_{<M}^x P_M^y A\|_{L_{t,x}^{p_0}L_y^{\frac{n}{2\sigma}}})\right).
\end{align*}
Combining Schur's test, Sobolev, Lemma \ref{lem 4.1} and Minkowski we obtain
\begin{align*}
&\,\sum_{R\lesssim N\sim  M} \int P_R u \,P_N \bar u \, P_M A\,dxdydt\\
\lesssim&\,\|P_R u\|_{\ell_R^{-\sigma,2} L_{t,x}^{p_0}L_y^{(\frac12-\frac{\sigma}{n})^{-1}}}^2
(\|M^{2\sigma}P_M^x A\|_{\ell_M^\infty L_{t,x}^{(p_0/2)'}L_y^{\frac{n}{2\sigma}}}
+\|M^{2\sigma}P_M^y A\|_{\ell_M^\infty L_{t,x}^{(p_0/2)'}L_y^{\frac{n}{2\sigma}}})\\
\lesssim &\, \|P_R u\|_{\ell_R^2 L_{t,x}^{p_0}L_y^{2}}^2(\|A\|_{L_t^{(p_0/2)'} B_{x,(p_0/2)',\infty}^{2\sigma} L_y^{\frac{n}{2\sigma}}}
+\|A\|_{L_{t,x}^{(p_0/2)'} B_{y,\frac{n}{2\sigma},\infty}^{2\sigma}})\\
\lesssim&\, \|u\|_{Z^0}^2(\|A\|_{L_t^{(p_0/2)'} B_{x,(p_0/2)',\infty}^{2\sigma} L_y^{\frac{n}{2\sigma}}}
+\|A\|_{L_{t,x}^{(p_0/2)'} B_{y,\frac{n}{2\sigma},\infty}^{2\sigma}}),
\end{align*}
which completes the proof.
\end{proof}

\subsection{Estimation in Regime II}
Recall that for $\xi=(\xi_x,\xi_y)\in\R^m\times\Z^n$ and $z=(x,y)\in\R^{m}\times\T^n$ the Galilean transformation $I_\xi$ is defined by
\begin{align*}
I_{\xi}u(t,z)=e^{iz\cdot\xi-it|\xi|^2}u(t,z-2t\xi).
\end{align*}
Following the same lines as in \cite{KwakKwon}, we derive suitable multilinear estimates for dealing the sums in Regime II. This will be done by considering the operator \begin{align}\label{def of J}
J_{\xi}A(t,z):=A(t,z-2t\xi)
\end{align}
for $z,\xi\in \R^m\times \T^n$, which is nothing else but the spatial translation appearing in the Galilean transformation. To handle the anisotropic nature of the model, we firstly prove the following crucial estimate which gives an anisotropic generalization of \cite[Lem. 3.8]{KwakKwon}.

\begin{lemma}\label{lem 5.2}
Let $\Omega$ be the open tetrahedron whose vertices are $(\frac12,1,\frac18)$, $(0,1,0)$, $(1,1,0)$ and $(0,0,0)$. Let $(p,r,\rho)$ be a triple of parameters such that $(\frac{1}{p},\frac{1}{r},\rho)$ lies in $\Omega$. Then for $N\in 2^\N$ and $k=(k_x,k_y)\in\Z^{m}\setminus\{0\}\times \Z^n$ we have
\begin{align}\label{5.4}
\|J_{Nk}P_{<N}^x A\|_{B_{t,p,\infty}^{-\rho} L_x^p L_y^{\frac{1}{4\rho}}}
\lesssim (N^{-2\rho}+|k_x|^{-\rho})N^{m(\frac{1}{r}-\frac{1}{p})-2\rho}\|A\|_{B_{t,r,r}^{\frac{1}{r}-\frac{1}{p}}L_x^r L_y^{\frac{1}{4\rho}}}.
\end{align}
Similarly, for $k=(k_x,k_y)\in\Z^{m}\times \Z^n\setminus\{0\}$ we have
\begin{align}\label{5.5}
\|J_{Nk}P_{<N}^y A\|_{B_{t,p,\infty}^{-\rho} L_x^r L_y^{\frac{1}{4\rho}}}
\lesssim (N^{-2\rho}+|k_y|^{-\rho})N^{4\rho n-2\rho}\|A\|_{B_{t,(\frac{1}{r}+4\rho)^{-1},(\frac{1}{r}+4\rho)^{-1}}
^{\frac{1}{r}-\frac{1}{p}+4\rho}L_x^r L_y^{\frac{1}{8\rho}}}.
\end{align}
\end{lemma}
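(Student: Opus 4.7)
The plan is to prove both \eqref{5.4} and \eqref{5.5} by trilinear complex interpolation on the parameter tetrahedron $\Omega$, reducing matters to estimates at its four vertices, which are then established separately. The observation driving this reduction is the following Fourier description of $J_{Nk}$: taking space-time Fourier transform yields $\widehat{J_{Nk}A}(\tau,\xi)=\hat A(\tau+2Nk\cdot\xi,\xi)$, so $J_{Nk}$ acts as a temporal-frequency shear whose magnitude is coupled to the spatial frequency $\xi$. Restricted to functions frequency-localized by $P_{<N}^x$ (resp.\ $P_{<N}^y$), this shear has size $\sim N^2|k_x|$ (resp.\ $\sim N^2|k_y|$), which is the geometric origin of the two-regime factor $(N^{-2\rho}+|k_x|^{-\rho})$ in the final bound.

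At the three easy vertices $(1/p,1/r,\rho)\in\{(0,0,0),(0,1,0),(1,1,0)\}$, where $\rho=0$, no dispersive gain is needed: the required estimates follow from translation invariance of $L^p_{x,y}$ under $J_{Nk}$, Bernstein's inequality in $x$ to produce the $N^{m(1/r-1/p)}$ factor, and the elementary Besov embeddings $B^0_{p,1}\hookrightarrow L^p\hookrightarrow B^0_{p,\infty}$ together with the one-dimensional Sobolev embedding $B^1_{1,1}(\R_t)\hookrightarrow L^\infty_t$ supplied by Lemma \ref{lemallprop} (i). The LP projector and $J_{Nk}$ commute with $P^t_M$, so these reductions respect the Besov structure on both sides.

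The main technical step is the nontrivial vertex $(1/p,1/r,\rho)=(1/2,1,1/8)$. For \eqref{5.4}, the plan is to pass to the space-time Fourier side via Plancherel and write, for each dyadic $M$, the quantity $\|P^t_M J_{Nk}P_{<N}^xA\|_{L^2_{t,x}L^2_y}^2$ as the $L^2$-mass of $\hat A$ on the tilted slab $\{(\tau,\xi):|\xi_x|<N,\ |\tau-2Nk_x\cdot\xi_x|\sim M\}$. The volume of this slab admits two competing bounds: either its $\tau$-thickness $\sim M$ over the $\xi_x$-range of length $\sim N$, or its $\xi_x$-extent $\sim M/(N|k_x|)$ at fixed $\tau$. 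Combining these with Hausdorff--Young in the temporal variable and with the $B^{1/2}_{t,1,1}$-weight on the right, after optimizing in $M$, produces exactly the two-regime bound $\min(N^{-2\rho},|k_x|^{-\rho})\lesssim N^{-2\rho}+|k_x|^{-\rho}$, while the residual $N^{m/2}$ is pure $x$-Bernstein. The analysis for \eqref{5.5} runs in parallel but with the shear driven by $2Nk_y$ acting on the periodic frequencies $\xi_y\in\Z^n$; the additional factor $N^{4\rho n}$ together with the stronger RHS integrability $L^{1/(8\rho)}_y$ encodes a $y$-direction Bernstein gain between the two Lebesgue exponents that is absent in \eqref{5.4}.

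The main obstacle will be executing this endpoint cleanly enough to extract the $|k_x|^{-1/8}$ (resp.\ $|k_y|^{-1/8}$) dispersive gain, which is the nontrivial improvement that activates precisely when the Galilean boost $|k|$ is large. In the periodic case \eqref{5.5} there is the extra combinatorial subtlety that $\xi_y$ ranges over $\Z^n$, so the tilted slab collapses to a union of affine hyperplanes whose integer point count must be bounded uniformly in $k_y$. Once all four vertex estimates are in hand, Lemma \ref{lemallprop} (vii) furnishes \eqref{5.4}--\eqref{5.5} throughout the interior of $\Omega$; the Radon--Nikodym hypothesis required there is automatic since every $y$-target space $L^{1/(4\rho)}_y$ or $L^{1/(8\rho)}_y$ is reflexive for $\rho\in(0,1/8)$.
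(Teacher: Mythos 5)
Your overall architecture --- a single genuinely dispersive estimate at the vertex $(\tfrac12,1,\tfrac18)$ interpolated against trivial estimates on the face $\rho=0$ --- coincides in outline with the paper's proof, but there is a genuine gap: the vertex estimate, which is the entire content of the lemma, is never actually proved. The paper does not prove it from scratch either; it quotes the scalar estimate \eqref{5.6} from \cite[Lem.~3.7]{KwakKwon}, upgrades it to the $L^2_y$-valued (resp.\ $L^2_x$-valued) statement \eqref{5.7} by writing $J_{Nk}=J^x_{Nk_x}J^y_{Nk_y}$, using that the $y$-translation leaves the $L^p_y$-norm invariant for each fixed $(t,x)$, and applying Minkowski, and then interpolates with the elementary bound \eqref{5.8}. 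Your substitute for this step is a heuristic Plancherel/slab-volume count that is not carried out: with only $B^{1/2}_{t,1,1}L^1_xL^2_y$ control on the right-hand side, Hausdorff--Young yields merely a sup bound on $\widehat{A}(\tau,\xi_x,\cdot)$ in $\ell^2_{\xi_y}$, and the subsequent $\tau$-integration over the tilted slab (intersected with the temporal support of each dyadic block of $A$) must be balanced against the $M^{-1/4}$-weights and the $B^{1/2}_{t,1,1}$-weights by a genuine case analysis in $M$, $M'$, $N$, $|k_x|$ --- exactly the step hidden behind ``after optimizing in $M$''. Moreover you claim the stronger gain $\min(N^{-2\rho},|k_x|^{-\rho})$, whereas \eqref{5.6} only provides the sum $(N^{-2\rho}+|k_x|^{-\rho})$, and no justification for the stronger claim is given. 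A further incorrect assertion is that $J_{Nk}$ commutes with $P^t_M$: it does not (it shears temporal frequencies by $2Nk\cdot\xi$, which is precisely the source of the difficulty); it only commutes with spatial Fourier multipliers.

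A second, more technical defect is the reduction to the four vertices of $\Omega$. Three of those vertices have $p$ or $r$ in $\{1,\infty\}$, while the complex interpolation statement you invoke, Lemma \ref{lemallprop} (vii), requires the Lebesgue exponents to lie in $(1,\infty)$; so one cannot interpolate from those corners as stated. The paper avoids this by proving the $\rho=0$ estimate \eqref{5.8} for the whole interior range $1<r<p<\infty$ at once (Bernstein in $x$ plus a temporal Besov--Sobolev embedding, no commutation with $P^t_M$ needed) and interpolating only between \eqref{5.7} and these interior points. For \eqref{5.5}, note also that the factor $N^{4\rho n}$, the target space $L_y^{1/(8\rho)}$ and the shifted temporal indices come out automatically of this two-point interpolation with the periodic endpoint (the original torus case of \cite[Lem.~3.7]{KwakKwon}, lifted to $L^2_x$ values); no lattice-point counting beyond what is already inside that cited lemma is required.
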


\begin{proof}
We recall the following estimate from \cite[Lem. 3.7]{KwakKwon}\footnote{We note that the original version of \eqref{5.6} is stated under the condition where $P_{<N}^x$ is replaced by $P_N^x$ and the spatial domain $\R$ replaced by $\T$. Similar arguments are applicable under the setting of Lemma \ref{lem 5.2} via a straightforward modification of those given in the proof of \cite[Lem. 3.7]{KwakKwon}.}
\begin{align}\label{5.6}
\|J^x_{Nk_x}P_{<N}^x A\|_{B_{t,2,2}^{-\frac18} L_{x}^2}
\lesssim N^{\frac{m}{2}-\frac14}(N^{-\frac14}+|k_x|^{-\frac18})\|A\|_{B_{t,1,1}^{\frac12}L_x^1},
\end{align}
where $J^x_\xi$ is understood as the translation operator \eqref{def of J} on $\R^m$. Using $J_{Nk}P_{<N}^x=J_{Nk_x}^x P_{<N}^x J_{Nk_y}^y$, the fact that $J_{Nk_y}^y$ leaves the $L_y^p$-norm invariant, taking $L_y^2$-norm on both sides of \eqref{5.6} and applying Minkowski we obtain
\begin{align}\label{5.7}
\|J_{Nk}P_{<N}^x A\|_{B_{t,2,\infty}^{-\frac18} L_{x,y}^2}
\lesssim N^{\frac{m}{2}-\frac14}(N^{-\frac14}+|k_x|^{-\frac18})\|A\|_{B_{t,1,1}^{\frac12}L_x^1L_y^2}.
\end{align}
On the other hand, for $1<r<p<\infty$ we have
\begin{align}\label{5.8}
\|J_{Nk}P_{<N}^x A\|_{B_{t,p,\infty}^{0} L_{x}^p L_y^\infty}\lesssim \|J_{Nk}P_{<N}^x A\|_{L_{t,x}^p L_y^\infty}
\lesssim \|P_{<N}^x A\|_{L_{t,x}^p L_y^\infty}\lesssim N^{m(\frac{1}{r}-\frac{1}{p})}\|A\|_{B_{t,r,r}^{\frac{1}{r}-\frac{1}{p}} L_x^r L_y^\infty}.
\end{align}
Interpolating \eqref{5.7} and \eqref{5.8} yields \eqref{5.4}. Similarly, \eqref{5.6} follows from interpolating
\begin{equation*}
\begin{aligned}
\|J_{Nk}P_{<N}^y A\|_{B_{t,2,\infty}^{-\frac18} L_{x,y}^2}\lesssim
N^{\frac{n}{2}-\frac14}(N^{-\frac14}+|k_y|^{-\frac18})\|A\|_{B_{t,1,1}^{\frac12} L_x^2 L_y^1}
\end{aligned}
\end{equation*}
and
\begin{align*}
\|J_{Nk}P_{<N}^y A\|_{B_{t,p,\infty}^{0}  L_x^r L_y^\infty}
\lesssim \|A\|_{B_{t,r,r}^{\frac{1}{r}-\frac{1}{p}}L_x^r L_y^\infty}.
\end{align*}
This completes the proof.
\end{proof}

We are now ready to give the main estimate in Regime II.

\begin{lemma}\label{est_lem_2}
We have
\begin{equation*}
\begin{aligned}
&\,\sum_{R\ll N\sim  M}\int P_R A\,P_N u \,P_M \bar u  \,dxdydt\\
\lesssim&\,\|u\|_{Z^0}^2 \bg(\|A\|_{B_{t,(\frac{2}{m+2}+4\sigma)^{-1},(\frac{2}{m+2}+4\sigma)^{-1}}^{\sigma}
B_{x,(\frac{2}{m+2}+\frac{4\sigma}{m})^{-1},\infty}^{14\sigma}
L_y^{\frac{n}{4\sigma}}}\\
&+\|A\|_{B_{t,(\frac{2}{m+2}+4\sigma)^{-1},(\frac{2}{m+2}+4\sigma)^{-1}}^{\sigma}
L_x^{(\frac{2}{m+2}+\frac{4\sigma}{m})^{-1}}
B_{y,\frac{n}{4\sigma},\infty}^{14\sigma}}\\
&+\|A\|_{B_{t,(\frac{2}{m+2}+8\sigma)^{-1},(\frac{2}{m+2}+8\sigma)^{-1}}^{5\sigma}
B_{x,(\frac{2}{m+2}+\frac{4\sigma}{m})^{-1},\infty}^{18\sigma}
L_y^{\frac{n}{8\sigma}}}\\
&+
\|A\|_{B_{t,(\frac{2}{m+2}+8\sigma)^{-1},(\frac{2}{m+2}+8\sigma)^{-1}}^{5\sigma}
L_x^{(\frac{2}{m+2}+\frac{4\sigma}{m})^{-1}}
B_{y,\frac{n}{8\sigma},\infty}^{18\sigma}}
\bg).
\end{aligned}
\end{equation*}
\end{lemma}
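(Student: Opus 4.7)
The plan is to adapt the multilinear strategy of Kwak and Kwon to the anisotropic waveguide setting. The core difficulty in Regime II, namely $R \ll N \sim M$, is that a direct H\"older-based bound as in Lemma \ref{est_lem_1} would lose control of the low-frequency factor $P_R A$: applying Bernstein to $P_R A$ to put it in $L^{p_0}_{t,x} L^{n/2\sigma}_y$ would cost exactly the positive powers of $N/R$ we need to save. To recover this loss, I would exploit the Galilean invariance encoded in the third component of the $Z^0$-norm in \eqref{eq 4.1}, and then transfer the resulting frequency-translated cost from $u$ onto $A$ via Lemma \ref{lem 5.2}.

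Concretely, using the identity $P_{C+\xi} I_\xi = I_\xi P_C$ from Lemma \ref{lem 3.8}, I would decompose each high-frequency factor into Galilean-shifted pieces of scale $R$: writing
\[ P_N u = \sum_{k \in \Z^d:\, |Rk| \sim N} I_{Rk}\,(P_{\leq 8R} I_{-Rk} P_N u), \]
and similarly for $P_M \bar u$ with a summation index $k'$. The cube-localized pieces $u_{R,k,N}:= P_{\leq 8R} I_{-Rk} P_N u$ are exactly the objects controlled in $\ell_k^2$ by the third summand of the $Z^0$-norm. For the product $I_{Rk} u_{R,k,N} \cdot \overline{I_{Rk'} v_{R,k',M}}$ to pair nontrivially with $P_R A$, frequency bookkeeping forces $|k-k'| = O(1)$, so up to finitely many off-diagonal translates we may specialize to $k = k'$. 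In that case the oscillating Galilean phases cancel and the spatial translation $J_{Rk}$ built into the transformation remains, producing
\[ \int P_R A \cdot I_{Rk} u_{R,k,N} \cdot \overline{I_{Rk} v_{R,k,M}}\,dxdydt = \int (J_{-Rk} P_R A)\cdot u_{R,k,N}\cdot \overline{v_{R,k,M}}\,dxdydt. \]

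Next, I would apply a trilinear H\"older estimate in $(t,x,y)$, placing $u_{R,k,N}$ and $\overline{v_{R,k,M}}$ in a Strichartz-type space such as $L^{p_0}_{t,x} L_y^{(\frac12-\frac{\sigma}{n})^{-1}}$ with the corresponding positive Besov-$t$ regularity needed to pair with the negative temporal regularity that Lemma \ref{lem 5.2} produces. For the remaining factor $J_{-Rk} P_R A$, I decompose via \eqref{2.1} into $P_R^x P_{\leq R}^y A$ and $P_{\leq R/2}^x P_R^y A$. The first piece is handled by \eqref{5.4} of Lemma \ref{lem 5.2} and yields the Besov norms of $A$ with $x$-regularity ($14\sigma$ or $18\sigma$); the second piece is handled by \eqref{5.5} and yields the $y$-regularity norms. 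The two distinct temporal exponents $\sigma$ and $5\sigma$, as well as the two Lebesgue indices $L_y^{n/4\sigma}$ and $L_y^{n/8\sigma}$, correspond to two different choices of the triple $(p,r,\rho)$ inside the tetrahedron $\Omega$ of Lemma \ref{lem 5.2}: one optimized for an $\ell^2_k$-sum in the critical regime and a second, more robust choice providing room for the $k$-summation via the decay $|k_x|^{-\rho}$ or $|k_y|^{-\rho}$.

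The concluding step is to perform the $k$-sum by Cauchy-Schwarz against the $\ell_k^2$-norm already built into the $Z^0$-control of $u_{R,k,N}$ and $v_{R,k,M}$, and then execute the dyadic sum over $R \ll N \sim M$ via Schur's test, exploiting the positive net power of $R/N$ that emerges from the Galilean gain. The main obstacle I anticipate is the careful bookkeeping of exponents: one must simultaneously satisfy the admissibility $(\frac{1}{p},\frac{1}{r},\rho)\in\Omega$ in Lemma \ref{lem 5.2}, the tri-H\"older scaling relations in $(t,x,y)$, and the requirement that the $\rho$-decay dominate the multiplicity $|\{k : |Rk|\sim N\}|$ so the $k$-sum converges; the two different parameter choices giving rise to the four displayed norms are precisely what is needed to balance these constraints. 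The $|k-k'| = O(1)$ off-diagonal contributions are routine, being reducible to the diagonal case by the translation-invariance of every norm in sight together with Lemma \ref{lem 3.8}.
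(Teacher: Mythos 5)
Your overall architecture is essentially the paper's: the paper invokes the identity from \cite[Lem.~3.9]{KwakKwon},
\begin{align*}
\int P_R A\,P_N u \,P_M \bar u\,dxdydt=\sum_{k\in\Z^d}\int P_{\leq 8R}I_{2Rk}P_N u\,\overline{P_{(-R,R]^d}I_{2Rk}P_M u}\,J_{2Rk}P_R A\,dxdydt,
\end{align*}
so your cube/Galilean decomposition with phase cancellation and your $|k-k'|=O(1)$ reduction are already built in (a single shift $k$ appears), and the subsequent steps (trilinear H\"older with Besov-in-time regularity, Lemma \ref{lem 5.2} for the translated low-frequency factor, Cauchy--Schwarz in $k$ against the third component of \eqref{eq 4.1}, then the dyadic summation over $R\ll N\sim M$) match the paper.

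However, there is a genuine gap in how you deploy the two halves of Lemma \ref{lem 5.2}. You tie \eqref{5.4} to the piece $P^x_RP^y_{\leq R}A$ and \eqref{5.5} to $P^x_{\leq R/2}P^y_RA$ coming from \eqref{2.1}. But the decay in Lemma \ref{lem 5.2} is in $|k_x|$ for \eqref{5.4} and in $|k_y|$ for \eqref{5.5}, while the only largeness available is $|k|\gtrsim N/R$, which forces just one of $|k_x|,|k_y|$ to be large. Your assignment therefore leaves unguarded configurations, e.g.\ the $x$-dominant piece of $P_RA$ paired with a shift having $k_x=0$ (where \eqref{5.4} is not even applicable) or $|k_x|=O(1)$, $|k_y|\sim N/R$, in which case the prefactor $R^{-2\sigma}+|k_x|^{-\sigma}\sim 1$ yields no gain; the sum over $R\ll N$ then loses a $\log N$ that cannot be absorbed into $\|u\|_{Z^0}^2$. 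The correct organization, and the paper's, is to split the \emph{$k$-sum} according to whether $|k_x|\geq|k|$ or $|k_y|\geq|k|$ and apply \eqref{5.4} resp.\ \eqref{5.5} to the whole of $J_{2Rk}P_RA$ (legitimate since $P_RA=P^x_{<8R}P_RA=P^y_{<8R}P_RA$); the decomposition \eqref{2.1} enters only at the very end, to convert the accumulated power $R^{14\sigma}$ (resp.\ $R^{18\sigma}$) into Besov regularity of $A$ in whichever direction carries the frequency $\sim R$. Relatedly, the two temporal exponents $\sigma$, $5\sigma$ and the indices $L_y^{n/4\sigma}$, $L_y^{n/8\sigma}$ come from this $|k_x|$-versus-$|k_y|$ case distinction (different scaling of \eqref{5.4} and \eqref{5.5}), not from two choices of $(p,r,\rho)$ in the tetrahedron; and the $|k_x|^{-\rho}$/$|k_y|^{-\rho}$ decay is not what makes the $k$-sum converge (that is pure Cauchy--Schwarz against $\ell^2_k$) but what supplies the $(R/N)^{\sigma}$ gain needed for the dyadic $R$-summation.
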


\begin{proof}
Using the same computation given in the proof of \cite[Lem. 3.9]{KwakKwon} we arrive at the identity
\begin{equation*}
\begin{aligned}
\int P_R A\,P_N u \,P_M \bar u  \,dxdydt
=\sum_{k\in\Z^d}\int P_{\leq 8R}I_{2Rk} P_N u\overline{P_{(-R,R]^d}I_{2Rk} P_M u}
J_{2R k} P_R A \,dxdydt.
\end{aligned}
\end{equation*}
Notice that $P_{\leq 8R}I_{2Rk} u_N$ is nonzero only if $|k|\gtrsim N/R$. We further discuss two cases: $|k_x|\geq |k|$ and $|k_y|\geq |k|$. In the first case, we use H\"older, Bernstein, \eqref{5.4}, Lemma \ref{kwak_lem_lhh}, \eqref{eq 4.1}, the fact $P_R A=P_{<8R}^x P_R A$ and Cauchy-Schwarz in $k$ to obtain
\begin{equation*}
\begin{aligned}
\lesssim&\,\sum_{\substack{k\in\Z^d\\ |k_x|\geq |k|}}\|P_{\leq 8R}I_{2Rk} P_N u\|_{B_{t,p_0,2}^{2\sigma} L_x^{p_0} L_y^{(\frac12-\frac{2\sigma}{n})^{-1}}}
\|P_{\leq 8R}I_{2Rk} P_M u\|_{B_{t,p_0,2}^{2\sigma} L_x^{p_0} L_y^{(\frac12-\frac{2\sigma}{n})^{-1}}}\\
&\,\qquad\qquad\times\|J_{2Rk}P_{<R}^x P_R A\|_{B_{t,(\frac{2}{m+2}+3\sigma)^{-1},\infty}^{-\sigma} L_x^{(p_0/2)'} L_y^{\frac{n}{4\sigma}}}\\
\lesssim&\,\sum_{\substack{k\in\Z^d\\ |k_x|\geq |k|}}(R^{-4\sigma}\|P_{\leq 8R}I_{2Rk} P_N u\|_{B_{t,p_0,2}^{2\sigma} L_x^{p_0} L_y^{2}})
(R^{-4\sigma}\|P_{\leq 8R}I_{2Rk} P_M u\|_{B_{t,p_0,2}^{2\sigma} L_x^{p_0} L_y^{2}})\\
&\,\qquad\qquad\times R^{15\sigma}\|J_{2Rk}P_{<8R}^x P_R A\|_{B_{t,(\frac{2}{m+2}+3\sigma)^{-1},\infty}^{-\sigma} L_x^{(\frac{2}{m+2}+\frac{3\sigma}{m})^{-1}} L_y^{\frac{n}{4\sigma}}}\\
\lesssim&\,\|P_N u\|_{Z^0}\|P_M u\|_{Z^0} (R^{-2\sigma}+(R/N)^{\sigma})\\
&\qquad\qquad\times R^{14\sigma}
\|P_R A\|_{B_{t,(\frac{2}{m+2}+4\sigma)^{-1},(\frac{2}{m+2}+4\sigma)^{-1}}^{\sigma}L_x^{(\frac{2}{m+2}+\frac{4\sigma}{m})^{-1}} L_y^{\frac{n}{4\sigma}}}\\
\lesssim&\, (R^{-2\sigma}+(R/N)^{\sigma})\|P_N u\|_{Z^0}\|P_M u\|_{Z^0}
 \bg(\|A\|_{B_{t,(\frac{2}{m+2}+4\sigma)^{-1},(\frac{2}{m+2}+4\sigma)^{-1}}^{\sigma}
B_{x,(\frac{2}{m+2}+\frac{4\sigma}{m})^{-1},\infty}^{14\sigma}
L_y^{\frac{n}{4\sigma}}}\\
&\qquad\qquad+
\|A\|_{B_{t,(\frac{2}{m+2}+4\sigma)^{-1},(\frac{2}{m+2}+4\sigma)^{-1}}^{\sigma}
L_x^{(\frac{2}{m+2}+\frac{4\sigma}{m})^{-1}}
B_{y,\frac{n}{4\sigma},\infty}^{14\sigma}}\bg),
\end{aligned}
\end{equation*}
where in the last step we used \eqref{2.1} to estimate $P_R A$ separately. The contribution of the first part follows from firstly summing over $R\ll N$ for fixed $N$ and then applying Cauchy-Schwarz w.r.t. $N$ and $M$. For the case $|k_y|\geq |k|$, we use the following estimate:
\begin{equation*}
\begin{aligned}
\lesssim&\,\sum_{\substack{k\in\Z^d\\ |k_y|\geq |k|}}\|P_{\leq 8R}I_{2Rk} P_N u\|_{B_{t,p_0,2}^{2\sigma} L_x^{(p_0^{-1}-\frac{2\sigma}{m})^{-1}} L_y^{(\frac12-\frac{2\sigma}{n})^{-1}}}\\
&\,\qquad\qquad\times\|P_{\leq 8R}I_{2Rk} P_M u\|_{B_{t,p_0,2}^{2\sigma} L_x^{(p_0^{-1}-\frac{2\sigma}{m})^{-1}} L_y^{(\frac12-\frac{2\sigma}{n})^{-1}}}\\
&\,\qquad\qquad\times\|J_{2Rk}P_{<R}^y P_R A\|_{B_{t,(\frac{2}{m+2}+3\sigma)^{-1},\infty}^{-\sigma} L_x^{(\frac{2}{m+2}+\frac{4\sigma}{m})^{-1}} L_y^{\frac{n}{4\sigma}}}\\
\lesssim&\,\sum_{\substack{k\in\Z^d\\ |k_y|\geq |k|}}(R^{-4\sigma}\|P_{\leq 8R}I_{2Rk} P_N u\|_{B_{t,p_0,2}^{2\sigma} L_x^{p_0} L_y^{2}})
(R^{-4\sigma}\|P_{\leq 8R}I_{2Rk} P_M u\|_{B_{t,p_0,2}^{2\sigma} L_x^{p_0} L_y^{2}})\\
&\,\qquad\qquad\times R^{16\sigma}\|J_{2Rk}P_{<R}^y P_R A\|_{B_{t,(\frac{2}{m+2}+3\sigma)^{-1},\infty}^{-\sigma} L_x^{(\frac{2}{m+2}+\frac{4\sigma}{m})^{-1}}L_y^{\frac{n}{4\sigma}}}\\
\lesssim&\,\|P_N u\|_{Z^0}\|P_M u\|_{Z^0} (R^{-2\sigma}+(R/N)^{\sigma})\\
&\qquad\qquad\times R^{18\sigma}
\|P_R A\|_{B_{t,(\frac{2}{m+2}+8\sigma)^{-1},(\frac{2}{m+2}+8\sigma)^{-1}}^{5\sigma}L_x^{(\frac{2}{m+2}+\frac{4\sigma}{m})^{-1}} L_y^{\frac{n}{8\sigma}}}\\
\lesssim&\, (R^{-2\sigma}+(R/N)^{\sigma})\|P_N u\|_{Z^0}\|P_M u\|_{Z^0}
 \bg(\|A\|_{B_{t,(\frac{2}{m+2}+8\sigma)^{-1},(\frac{2}{m+2}+8\sigma)^{-1}}^{5\sigma}
B_{x,(\frac{2}{m+2}+\frac{4\sigma}{m})^{-1},\infty}^{18\sigma}
L_y^{\frac{n}{8\sigma}}}\\
&\qquad\qquad+
\|A\|_{B_{t,(\frac{2}{m+2}+8\sigma)^{-1},(\frac{2}{m+2}+8\sigma)^{-1}}^{5\sigma}
L_x^{(\frac{2}{m+2}+\frac{4\sigma}{m})^{-1}}
B_{y,\frac{n}{8\sigma},\infty}^{18\sigma}}\bg).
\end{aligned}
\end{equation*}
This completes the proof.
\end{proof}

Summing up, Lemma \ref{est_lem_1} and Lemma \ref{est_lem_2} imply immediately the following lemma, which being a fundamental step in proving the crucial multilinear estimate given in next subsection.
\begin{lemma}\label{lem 5.4}
We have
\begin{equation*}
\begin{aligned}
\||u|^2 A\|_{L_{t,x,y}^1}&\lesssim \|u\|_{Z^0}^2\bg(\|A\|_{L_t^{(p_0/2)'} B_{x,(\frac{2}{m+2}+\frac{\sigma}{m})^{-1},\infty}^{2\sigma} L_y^{\frac{n}{\sigma}}}
+
\|A\|_{L_{t,x}^{(p_0/2)'}B_{y,\frac{n}{2\sigma},\infty}^{2\sigma}}\\
&+\|A\|_{B_{t,(\frac{2}{m+2}+4\sigma)^{-1},(\frac{2}{m+2}+4\sigma)^{-1}}^{\sigma}
B_{x,(\frac{2}{m+2}+\frac{4\sigma}{m})^{-1},\infty}^{14\sigma}
L_y^{\frac{n}{4\sigma}}}\\
&+\|A\|_{B_{t,(\frac{2}{m+2}+4\sigma)^{-1},(\frac{2}{m+2}+4\sigma)^{-1}}^{\sigma}
L_x^{(\frac{2}{m+2}+\frac{4\sigma}{m})^{-1}}
B_{y,\frac{n}{4\sigma},\infty}^{14\sigma}}\\
&+\|A\|_{B_{t,(\frac{2}{m+2}+8\sigma)^{-1},(\frac{2}{m+2}+8\sigma)^{-1}}^{5\sigma}
B_{x,(\frac{2}{m+2}+\frac{4\sigma}{m})^{-1},\infty}^{18\sigma}
L_y^{\frac{n}{8\sigma}}}\\
&+
\|A\|_{B_{t,(\frac{2}{m+2}+8\sigma)^{-1},(\frac{2}{m+2}+8\sigma)^{-1}}^{5\sigma}
L_x^{(\frac{2}{m+2}+\frac{4\sigma}{m})^{-1}}
B_{y,\frac{n}{8\sigma},\infty}^{18\sigma}}\bg).
\end{aligned}
\end{equation*}
\end{lemma}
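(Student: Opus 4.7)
The plan is to obtain Lemma \ref{lem 5.4} as a direct corollary of the two preceding estimates, Lemma \ref{est_lem_1} and Lemma \ref{est_lem_2}, by means of the dyadic reduction already anticipated at the opening of Section \ref{sec 5}. First I would write $|u|^2A = u\,\bar u\,A$ and insert a full Littlewood--Paley decomposition $u=\sum_R P_R u$, $\bar u=\sum_N P_N \bar u$, $A=\sum_M P_M A$ into each factor, so that
\[
\||u|^2 A\|_{L^1_{t,x,y}} \;\lesssim\; \sum_{R,N,M\in 2^{\mathbb{N}_0}}\Bigl|\int P_R u\,P_N\bar u\,P_M A\,dxdydt\Bigr|.
\]
A standard Fourier-support argument then forces the two largest of $R,N,M$ to be comparable, restricting the summation to three regimes: $R\sim N\gtrsim M$, $R\sim M\gtrsim N$, and $N\sim M\gtrsim R$. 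The conjugation symmetry $u\leftrightarrow\bar u$, which exchanges the roles of $R$ and $N$, identifies the first two regimes, so the entire sum collapses onto exactly the two configurations highlighted at the start of Section \ref{sec 5}: regime (I) $R\lesssim N\sim M$ (with the diagonal $R\sim N\sim M$ absorbed here by virtue of the non-strict inequality allowed in Lemma \ref{est_lem_1}) and regime (II) $R\ll N\sim M$.

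With this reduction in place, I would apply Lemma \ref{est_lem_1} to the sum over regime (I), which supplies the first two Besov-type terms on the right-hand side of Lemma \ref{lem 5.4}, and Lemma \ref{est_lem_2} to the sum over regime (II), which supplies the remaining four. Adding the two contributions produces the claimed inequality.

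Since all of the analytically substantive work --- the anisotropic Strichartz estimates of Lemma \ref{prop1}, the Galilean-transport bound of Lemma \ref{lem 5.2}, and the frequency-localized Besov--H\"older inequality of Lemma \ref{kwak_lem_lhh} --- has already been absorbed into Lemmas \ref{est_lem_1} and \ref{est_lem_2}, no essentially new analytical obstacle arises at this step. The only point that needs some care is the dyadic bookkeeping itself: one must verify that the symmetry $u\leftrightarrow\bar u$ genuinely identifies the regime $R\sim M\gtrsim N$ with the regime $R\sim N\gtrsim M$, so that no configuration is double-counted or missed, and that the combined non-strict and strict inequalities in regimes (I) and (II) indeed partition the full triple summation. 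Both checks are routine but merit explicit treatment because the two regimes treat $A$ asymmetrically --- in regime (I) the Littlewood--Paley projector on $A$ carries the top frequency scale, whereas in regime (II) it carries the bottom, which is precisely why the two regimes produce distinct families of Besov norms on the right-hand side.
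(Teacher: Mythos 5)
Your overall route coincides with the paper's: Lemma \ref{lem 5.4} is obtained there precisely by adding the two regime estimates, the dyadic reduction and the $u\leftrightarrow\bar u$ symmetry having been set up at the opening of Section \ref{sec 5}, so the assembly ``Lemma \ref{est_lem_1} in regime (I), Lemma \ref{est_lem_2} in regime (II), then sum'' is exactly the intended (and essentially immediate) argument.

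However, the one point you single out as needing care is stated incorrectly. With your labels ($R$ for $u$, $N$ for $\bar u$, $M$ for $A$), the exchange $u\leftrightarrow\bar u$ swaps $R$ and $N$ and leaves $M$ fixed; hence it identifies the regime $R\sim M\gtrsim N$ with the regime $N\sim M\gtrsim R$ --- these are the two copies of regime (I), where one factor of $u$ sits at the lowest scale and $A$ at the top --- while the regime $R\sim N\gtrsim M$ is invariant under the exchange and is exactly regime (II), where $A$ sits at the bottom (the paper's regime (II) relabels the frequency of $A$ as $R$). Your claim that the symmetry identifies $R\sim N\gtrsim M$ with $R\sim M\gtrsim N$ is false, and if one followed it literally the $A$-low configuration would be routed to Lemma \ref{est_lem_1}, which cannot handle it: in that regime the $2\sigma$-derivative losses produced by the anisotropic H\"older/Bernstein step land on the high-frequency factors $P_N u$, $P_R\bar u$ and cannot be absorbed by $P_M A$ with $M\ll N$; this is precisely the configuration that requires the Galilean-translation machinery of Lemma \ref{lem 5.2} and Lemma \ref{est_lem_2}. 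Since your final assignment of the two regimes to the two lemmas is nevertheless the correct one (and your closing remark about where the projector on $A$ sits shows you understand the structure), this is a bookkeeping slip rather than a wrong approach, but the verification as you phrased it would fail and should be corrected; the strict/non-strict split you describe (diagonal $R\sim N\sim M$ absorbed into $R\lesssim N\sim M$ for Lemma \ref{est_lem_1}, only $A$-frequency $\ll N\sim M$ for Lemma \ref{est_lem_2}) is fine.
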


\subsection{Main multilinear estimate}
In this subsection, we use Lemma \ref{lem 5.4} to prove our main multilinear estimate.
\begin{lemma}\label{lem5.5-}
For $\alpha>\frac{4}{m}$ and $k\in\Z$ we have
\begin{align*}
\|vw|u|^{\alpha-k} u^k\|_{L_{t,x,y}^1}\lesssim \|v\|_{Z^0}\|w\|_{Z^0}\|u\|_{Z^s}^\alpha.
\end{align*}
\end{lemma}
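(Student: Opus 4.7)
The natural approach is to reduce to Lemma \ref{lem 5.4} in a polarized form and then estimate each of the six Besov-type norms of $A:=|u|^{\alpha-k}u^k$ appearing on its right-hand side by $\|u\|_{Z^s}^\alpha$. First I would note that the proofs of Lemmas \ref{est_lem_1} and \ref{est_lem_2} use only Cauchy--Schwarz, the $Z^0$-control of the two Schr\"odinger-type factors, and the Galilean invariance in Lemma \ref{lem 3.8}, none of which depends on the two factors being complex conjugates of one another. Consequently, replacing $u,\bar u$ by $v,\bar w$ (and symmetrizing where necessary) yields verbatim the polarized bound
$$\|vwA\|_{L^1_{t,x,y}}\lesssim \|v\|_{Z^0}\|w\|_{Z^0}\cdot \mathcal{B}(A),$$
where $\mathcal{B}(A)$ denotes the sum of the six norms on the right-hand side of Lemma \ref{lem 5.4}. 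It then suffices to establish $\mathcal{B}(|u|^{\alpha-k}u^k)\lesssim \|u\|_{Z^s}^\alpha$.

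The six norms fall into two families. The pure-spatial norms, of type $L_t^q B_{x,\cdot,\cdot}^{\theta} L_y^r$ and $L_t^q L_x^r B_{y,\cdot,\cdot}^{\theta}$, are handled by the anisotropic fractional chain rules, namely Lemma \ref{lem3.13+} for $x$-derivatives and Lemma \ref{lem3.9} for $y$-derivatives. Each produces a factor of the form $\|u\|_{L_t^{q_1}\mathcal{X}_1}^{\alpha-1}\cdot \|u\|_{L_t^{q_2}\mathcal{X}_2}$, where $\mathcal{X}_1$ carries only a tiny regularity $\sigma_i$ and $\mathcal{X}_2$ carries the critical regularity $s=s_c$. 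Using Lemma \ref{lemallprop}(iv) to upgrade Besov integrability into $L^q_xL^2_y$-integrability and invoking the Strichartz estimate of Lemma \ref{prop1} with admissible pairs of arbitrarily large temporal exponent, each factor is absorbed by the first (Strichartz) slot in the definition \eqref{eq 4.1} of the $Z^s$-norm.

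The mixed space-time Besov norms are more delicate. Here I would apply Lemma \ref{lem 3.6} to the pieces with $x$-regularity and Lemma \ref{lem 3.7} to those with $y$-regularity. Both lemmas convert the combined temporal-spatial Besov regularity of $|u|^{\alpha-k}u^k$ into two one-variable Besov regularities on $u$: a spatial one of type $L_t^{p_{tx}}B_{x,\cdot,\cdot}^{2s_t+s_x}L_y^{p_y}$ (or its $y$-analogue), and a temporal one of type $B_{t,\cdot,\cdot}^{s_t+s_x/2}L_x^{p_{tx}}L_y^{p_y}$. The spatial factor is again absorbed by the Strichartz slot of $Z^s$, while the temporal factor is absorbed by the second slot of \eqref{eq 4.1}, consisting precisely of Besov-in-time norms with adjusted spatial regularity. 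In the non-algebraic regime $\alpha-k<1$ one additionally relies on Corollary \ref{cor 3.2} to pass the H\"older-continuous nonlinearity through Besov spaces.

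The main technical obstacle will be the bookkeeping. One must choose the admissible Strichartz exponents $(p,q)$ in Lemma \ref{prop1}, together with the auxiliary small parameters $\sigma_1,\sigma_2$ of Lemma \ref{lem3.13+} and the temporal regularities $s_t,s_x,s_y$ of Lemmas \ref{lem 3.6}--\ref{lem 3.7}, so as to satisfy simultaneously (i) all Hölder-sum conditions in the chain rules, (ii) the Strichartz admissibility $\tfrac{2}{p}+\tfrac{m}{q}=\tfrac{m}{2}$, and (iii) the critical scaling dictated by $s=s_c=\tfrac{d}{2}-\tfrac{2}{\alpha}$. The nearly unlimited endpoint flexibility granted by Lemma \ref{prop1}, combined with the hierarchy $0<\sigma\ll \sigma_1\ll 1$, should provide just enough slack to close all constraints, and the condition $\alpha>4/m$ enters precisely as the mass-supercritical threshold that allows the temporal Besov regularity to be absorbed without loss.
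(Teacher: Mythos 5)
Your overall strategy is sound and, after the first step, runs parallel to the paper's argument; the genuine difference lies in the initial reduction. The paper never polarizes Lemma \ref{lem 5.4}: it first applies Cauchy--Schwarz in the form $\|vw|u|^{\alpha-k}u^k\|_{L^1_{t,x,y}}\lesssim \|v\,|u|^{\alpha/2-k}u^k\|_{L^2_{t,x,y}}\,\|w\,|u|^{\alpha/2}\|_{L^2_{t,x,y}}$, observes that $\|vA\|_{L^2_{t,x,y}}^2=\||v|^2|A|^2\|_{L^1_{t,x,y}}$ is exactly of the form treated in Lemma \ref{lem 5.4} (with $|A|^2$ in place of $A$), and then uses Corollary \ref{cor 3.6} to de-square the resulting Besov norms of $|A|^2$ into squares of norms of $A$, producing a functional $\mathcal{M}$; the task then becomes $\mathcal{M}(|u|^{\alpha/2-k}u^k)\lesssim\|u\|_{Z^s}^{\alpha/2}$. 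Your route instead re-derives the trilinear estimate for three distinct inputs. That polarization is legitimate: in the proofs of Lemmas \ref{est_lem_1} and \ref{est_lem_2} the two Schr\"odinger factors are always estimated separately (H\"older, Bernstein, the Galilean identity and the third slot of the $Z^0$-norm are applied to each factor on its own), so nothing uses that they are conjugate; losing the $u\leftrightarrow\bar u$ symmetry merely doubles the number of frequency cases. What the paper's route buys is that Lemma \ref{lem 5.4} can be quoted as a black box, at the price of Corollary \ref{cor 3.6} and of working with half powers; your route avoids the de-squaring but requires the (routine) polarized re-run of Section \ref{sec 5}. Since the exponent per unit power is the same in your $\mathcal{B}$ (power $\alpha$, temporal exponent about $\tfrac{m+2}{2}$) as in $\mathcal{M}$ (power $\alpha/2$, temporal exponent about $m+2$), the remaining numerology is identical, and the core of the proof --- the admissible-pair bookkeeping, the hierarchy $\sigma\ll s_{y,2}\ll s_{t,2}\ll 1$, and the use of $\alpha>\tfrac4m$ to keep pairs such as $((1-\theta)r_{t,2},\cdot)$ admissible --- is exactly the computation for the terms $I$--$VI$ that you have only sketched.

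Two small corrections to your sketch. For the pure-spatial norms no factor needs to carry the critical regularity $s_c$: these norms involve only $O(\sigma)$ derivatives, and $s$ enters solely through Bernstein when matching against the $\ell_R^{s,2}L_t^pL_x^qL_y^2$ slot of \eqref{eq 4.1}; accordingly the paper handles them with the H\"older-piece machinery (Lemma \ref{kwak_lem_lhh}, Corollary \ref{cor 3.2}) and reserves Lemmas \ref{lem3.9} and \ref{lem3.13+} for the high-low interactions of Section \ref{sec 5+}, where the full regularity $s+\nu$ must pass through the nonlinearity; your use of them here can work for the tiny regularity, provided the case of total power $\alpha\le1$ is covered by Corollary \ref{cor 3.2}. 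Also, the H\"older-continuity threshold concerns the total power: $|u|^{\alpha-k}u^k$ is $C^{0,\min(\alpha,1)}$, so the relevant condition is on $\alpha$ (or on the pieces $\alpha_j$ with $\sum_j\alpha_j=\alpha$), not on $\alpha-k$.
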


\begin{proof}
Using Lemma \ref{lem 5.4} we know that
\begin{equation*}
\begin{aligned}
\|uA\|_{L_{t,x,y}^2}^2&\lesssim \|u\|_{Z^0}^2\bg(\||A|^2\|_{L_t^{(p_0/2)'} B_{x,(\frac{2}{m+2}+\frac{\sigma}{m})^{-1},\infty}^{2\sigma} L_y^{\frac{n}{\sigma}}}
+
\||A|^2\|_{L_{t,x}^{(p_0/2)'}B_{y,\frac{n}{2\sigma},\infty}^{2\sigma}}\\
&+\||A|^2\|_{B_{t,(\frac{2}{m+2}+4\sigma)^{-1},(\frac{2}{m+2}+4\sigma)^{-1}}^{\sigma}
B_{x,(\frac{2}{m+2}+\frac{4\sigma}{m})^{-1},\infty}^{14\sigma}
L_y^{\frac{n}{4\sigma}}}\\
&+\||A|^2\|_{B_{t,(\frac{2}{m+2}+4\sigma)^{-1},(\frac{2}{m+2}+4\sigma)^{-1}}^{\sigma}
L_x^{(\frac{2}{m+2}+\frac{4\sigma}{m})^{-1}}
B_{y,\frac{n}{4\sigma},\infty}^{14\sigma}}\\
&+\||A|^2\|_{B_{t,(\frac{2}{m+2}+8\sigma)^{-1},(\frac{2}{m+2}+8\sigma)^{-1}}^{5\sigma}
B_{x,(\frac{2}{m+2}+\frac{4\sigma}{m})^{-1},\infty}^{18\sigma}
L_y^{\frac{n}{8\sigma}}}\\
&+
\||A|^2\|_{B_{t,(\frac{2}{m+2}+8\sigma)^{-1},(\frac{2}{m+2}+8\sigma)^{-1}}^{5\sigma}
L_x^{(\frac{2}{m+2}+\frac{4\sigma}{m})^{-1}}
B_{y,\frac{n}{8\sigma},\infty}^{18\sigma}}\bg).
\end{aligned}
\end{equation*}
Using Corollary \ref{cor 3.6} we infer that
\begin{equation*}
\begin{aligned}
\|uA\|_{L_{t,x,y}^2}^2
\lesssim&\,\|u\|_{Z^0}^2\bg(\|A\|^2_{L_t^{m+2} B_{x,(\frac{1}{m+2}+\frac{5\sigma}{2m})^{-1},\infty}^{3\sigma}L_y^{\frac{2n}{\sigma}}}
+\|A\|^2_{L_{t,x}^{m+2} B_{y,\frac{n}{3\sigma},\infty}^{3\sigma}}\\
&\qquad+\|A\|^2_{B_{t,(\frac{1}{m+2}+\frac{7\sigma}{2})^{-1},(\frac{1}{m+2}+2\sigma)^{-1}}^{2\sigma}
B_{x,(\frac{1}{m+2}+\frac{10\sigma}{m})^{-1},\infty}^{15\sigma}
L_y^{\frac{n}{2\sigma}}}\\
&\qquad+\|A\|^2_{B_{t,(\frac{1}{m+2}+\frac{7\sigma}{2})^{-1},(\frac{1}{m+2}+2\sigma)^{-1}}^{2\sigma}
L_x^{(\frac{1}{m+2}+\frac{2\sigma}{m})^{-1}}
B_{y,\frac{n}{10\sigma},\infty}^{15\sigma}}\\
&\qquad+\|A\|^2_{B_{t,(\frac{1}{m+2}+\frac{15\sigma}{2})^{-1},(\frac{1}{m+2}+4\sigma)^{-1}}^{6\sigma}
B_{x,(\frac{1}{m+2}+\frac{12\sigma}{m})^{-1},\infty}^{19\sigma}
L_y^{\frac{n}{4\sigma}}}\\
&\qquad+\|A\|^2_{B_{t,(\frac{1}{m+2}+\frac{15\sigma}{2})^{-1},(\frac{1}{m+2}+4\sigma)^{-1}}^{6\sigma}
L_x^{(\frac{1}{m+2}+\frac{2\sigma}{m})^{-1}}
B_{y,\frac{n}{14\sigma},\infty}^{19\sigma}}\bg)\\
=:&\,\|u\|_{Z^0}^2(I^2+II^2+III^2+IV^2+V^2+VI^2)\\
=:&\,\|u\|_{Z^0}^2\,\mathcal{M}(A)^2.
\end{aligned}
\end{equation*}
Hence
\begin{align*}
\|vw |u|^{\alpha-k} u^k\|_{L_{t,x,y}^1}&\lesssim \|v^* |u|^{\frac{\alpha}{2}-k} u^k \|_{L_{t,x,y}^2}
\|w |u|^{\frac{\alpha}{2}} \|_{L_{t,x,y}^2}
\lesssim \|v\|_{Z^0} \|w\|_{Z^0}{ \mathcal{M}(|u|^{\frac{\alpha}{2}-k} u^k)
\mathcal{M}(|u|^{\frac{\alpha}{2}})},
\end{align*}
which implies $\|v |u|^{\alpha-k} u^k\|_{L_{t,x,y}^1}\lesssim \|v\|_{Z^0}{\mathcal{M}(|u|^{\frac{\alpha}{2}-k} u^k)
\mathcal{M}(|u|^{\frac{\alpha}{2}})}$. To prove the desired claim, it suffices to show that
$ \mathcal{M}(|u|^{\frac{\alpha}{2}-k} u^k)
\lesssim \|u\|_{Z^s}^{\frac{\alpha}{2}}$ holds for any $k\in\Z$. The terms $I$ to $VI$ will be estimated in order. Since $\frac{\alpha}{2}$ is not necessarily a small number, we need a further decomposition in order to guarantee the H\"older continuity of the nonlinear term. Let $\alpha_j\in (0,1)$ and $k_j\in \Z$ ($j=1,...,\zeta$ with some $\zeta\in\N$) satisfy $\sum_{j=1}^\zeta \alpha_j=\frac{\alpha}{2}$ and $\sum_{j=1}^\zeta k_j=k$. Let also $s_x,s_y$ satisfy $1\gg s_x,s_y\gg\sigma$ and define $r_x,r_y$ by $\frac{\alpha}{2}(\frac{1}{r_x}-\frac{s_x}{m})=((\frac{1}{m+2}+\frac{5\sigma}{2m})^{-1})^{-1}-\frac{3\sigma}{m}$ and
$\frac{\alpha}{2}(\frac{1}{r_y}-\frac{s_y}{n})=(\frac{n}{3\sigma})^{-1}-\frac{3\sigma}{n}$. An inductive application of Lemma \ref{kwak_lem_lhh} and Corollary \ref{cor 3.2} yields
\begin{align}\label{5.19}
\begin{aligned}
I+II&\lesssim \prod_{j=1}^\zeta \||u|^{\alpha_j-k_j} u^{k_j}\|_{L_t^{\frac{\alpha(m+2)}{2\alpha_j}} B^{s_x \alpha_j}_{x,r_x/\alpha_j,\infty} L_y^{\frac{\alpha n}{\sigma \alpha_j}}
\,\cap\,
L_{t,x}^{\frac{\alpha(m+2)}{2\alpha_j}} B^{s_y \alpha_j}_{y,r_y/\alpha_j,\infty}}\\
&\lesssim \|u\|_{L_t^{\frac{\alpha(m+2)}{2}} B^{s_x}_{x,r_x,r_x} L_y^{\frac{\alpha n}{\sigma}}
\,\cap\,
L_{t,x}^{\frac{\alpha(m+2)}{2}} B^{s_y}_{y,r_y,r_y}}.
\end{aligned}
\end{align}
By direct computation one easily verifies that $(\frac{\alpha(m+2)}{2},r_x)$ and $(\frac{\alpha(m+2)}{2},\frac{\alpha(m+2)}{2})$ are $(s-\frac{n}{2}-s_x+\frac{\sigma}{\alpha})$- and $(s-\frac{n}{2})$-admissible Strichartz pair respectively. Now let $q_\alpha$ be the number such that $(\frac{\alpha(m+2)}{2},q_\alpha)$ is Strichartz admissible. Then combining Minkowski, Bernstein and Lemma \ref{lemallprop} it follows
\begin{align}\label{5.20}
\begin{aligned}
I+II&\lesssim \| P_{N_x}^x P_{N_y}^y u\|^{\frac{\alpha}{2}}_{\bg(\ell_{N_x}^{(s-\frac{n}{2}-s_x+\frac{\sigma}{\alpha})+s_x,2}
\ell_{N_y}^{\frac{n}{2}-\frac{\sigma}{\alpha}}\,\cap\,
\ell_{N_x}^{s-\frac{n}{2},2} \ell_{N_y}^{n(\frac{1}{2}-\frac{s_y}{n})+s_y,2}\bg)
L_t^{\frac{\alpha(m+2)}{2}} L_x^{q_\alpha} L_y^2}\\
&\lesssim \|P_R u\|_{\ell_R^{s,2} L_t^{\frac{\alpha(m+2)}{2}} L_x^{q_\alpha} L_y^2}^{\frac{\alpha}{2}}
\lesssim \|P_R u\|_{Z^s}^{\frac{\alpha}{2}}.
\end{aligned}
\end{align}
In the rest it still suffices to estimate $III$ and $IV$, $V$ and $VI$ can then be dealt in a similar way. For $III$, we first unify its index in time and $x$-direction:
\[III\lesssim \||u|^{\frac{\alpha}{2}-k} u^k\|_{B_{t,(\frac{1}{m+2}+\frac{7\sigma}{2}+\frac{10\sigma}{m})^{-1},(\frac{1}{m+2}+\frac{7\sigma}{2}+\frac{10\sigma}{m})^{-1}}
^{(2+\frac{10}{m})\sigma}
B_{x,(\frac{1}{m+2}+\frac{7\sigma}{2}+\frac{10\sigma}{m})^{-1},(\frac{1}{m+2}+\frac{7\sigma}{2}+\frac{10\sigma}{m})^{-1}}
^{(15+\frac{7m}{2})\sigma}
L_y^{\frac{n}{2\sigma}}}.\]
Let $s_{t,1}$ be a number satisfying $\sigma_1\gg s_{t,1}\gg\sigma$ and let $s_{x,1},r_{tx,1}$ satisfy  $\frac{\alpha}{2}(\frac{1}{r_{tx,1}}-s_{t,1})
=(\frac{1}{m+2}+\frac{7\sigma}{2}+\frac{10\sigma}{m})-2\sigma$ and $\frac{\alpha}{2}(\frac{1}{r_{tx,1}}-\frac{s_{x,1}}{m})
=(\frac{1}{m+2}+\frac{7\sigma}{2}+\frac{10\sigma}{m})-\frac{15\sigma}{m}$.
Estimating as in \eqref{5.19} and using Lemma \ref{lem 3.6} it follows
\begin{align*}
\begin{aligned}
III\lesssim \|u\|^{\frac{\alpha}{2}}_{\bg(L_t^{r_{tx,1}} B_{x,r_{tx,1},r_{tx,1}}^{2s_{t,1}+s_{x,1}}
\,\cap\, B_{t,r_{tx,1},r_{tx,1}}^{s_{t,1}+s_{x,1}/2} L_x^{r_{tx,1}} \bg)L_y^{\frac{\alpha n}{4\sigma}}}.
\end{aligned}
\end{align*}
By direct computation one verifies that $(r_{tx,1},r_{tx,1})$ is an $(s-\frac{n}{2}-(2s_{t,1}+s_x)+\frac{4\sigma}{\alpha})$-admissible Strichartz pair. Let $q_1$ be the number such that $(r_{tx,1},q_1)$ is Strichartz admissible. Then estimating as in \eqref{5.20} we obtain
\begin{align*}
\begin{aligned}
III&\lesssim \|P_{N_x}^x P_{N_y}^y u\|^{\frac{\alpha}{2}}_{\ell_{N_x}^{(s-\frac{n}{2}-(2s_{t,1}+s_{x,1})+\frac{4\sigma}{\alpha})+2s_{t,1}+s_{x,1},2}
\ell_{N_y}^{\frac{n}{2}-\frac{4\sigma}{\alpha},2}
L_t^{r_{tx,1}}L_x^{q_1} L_y^2}\\
&\qquad+
\|P_{N_x}^x P_{N_y}^y u\|^{\frac{\alpha}{2}}_{\ell_{N_x}^{s-\frac{n}{2}-(2s_{t,1}+s_{x,1})+\frac{4\sigma}{\alpha},2}
\ell_{N_y}^{\frac{n}{2}-\frac{4\sigma}{\alpha},2}
B_{t,r_{tx,1},2}^{s_{t,1}+s_{x,1}/2}L_x^{q_1} L_y^2}\\
&\lesssim
\|P_N u\|^{\frac{\alpha}{2}}_{\ell_{N}^{s,2}
L_t^{r_{tx,1}}L_x^{q_1} L_y^2}
+
\|P_N u\|^{\frac{\alpha}{2}}_{\ell_{N}^{s-(2s_{t,1}+s_{x,1}),2}
B_{t,r_{tx,1},2}^{s_{t,1}+s_{x,1}/2}L_x^{q_1} L_y^2}\lesssim \|u\|_{Z^s}^{\frac{\alpha}{2}}.
\end{aligned}
\end{align*}
Finally we estimate $IV$. Let $s_{t,2},s_{y,2}$ satisfy $\sigma_1\gg s_{t,2}\gg s_{y,2}\gg \sigma$. Define $r_{t,2},r_{y,2}$ by $\frac{\alpha}{2}(\frac{1}{r_{t,2}}-s_{t,2})
=(\frac{1}{m+2}+\frac{7\sigma}{2})-2\sigma$ and $\frac{\alpha}{2}(\frac{1}{r_{y}^2}-\frac{s_{y,2}}{n})
=(\frac{n}{10\sigma})^{-1}-\frac{15\sigma}{n}$. Estimating as in \eqref{5.19} and using Lemma \ref{lem 3.7} it follows
\begin{align}\label{5.23}
\begin{aligned}
IV&\lesssim \|u\|^{\frac{\alpha}{2}}_{L_t^{r_{t,2}}L_x^{\alpha(\frac{1}{m+2}+\frac{2\sigma}{m})^{-1}/2}
B_{y,r_{y,2},\theta r_{y,2}}^{2s_{t,2}+s_{y,2}}
\,\cap\, B_{t,r_{t,2},(1-\theta)r_{t,2}}^{s_{t,2}+s_{y,2}/2}L_x^{\alpha(\frac{1}{m+2}+\frac{2\sigma}{m})^{-1}/2}
L_y^{r_{y,2}}}\\
&\lesssim \|u\|^{\frac{\alpha}{2}}_{L_t^{r_{t,2}}L_x^{\alpha(\frac{1}{m+2}+\frac{2\sigma}{m})^{-1}/2}
B_{y,\theta r_{y,2},\theta r_{y,2}}^{2s_{t,2}+s_{y,2}+n(\theta^{-1}-1)/r_{y,2}}}\\ &\qquad+\|u\|^{\frac{\alpha}{2}}_{B_{t,(1-\theta)r_{t,2},(1-\theta)r_{t,2}}^{s_{t,2}+s_{y,2}/2+((1-\theta)^{-1}-1)/r_{t,2}}L_x^{\alpha(\frac{1}{m+2}+\frac{2\sigma}{m})^{-1}/2}
L_y^{r_{y,2}}},
\end{aligned}
\end{align}
where $\theta=\frac{s_{y,2}}{2s_{t,2}+s_{y,2}}$. By direct computation one verifies that $(r_{t,2},\alpha(\frac{1}{m+2}+\frac{2\sigma}{m})^{-1}/2)$ is an $(s-\frac{n}{2}-\frac{10\sigma}{\alpha}-2s_{t,2})$-admissible Strichartz pair. Notice also that since $\sigma\ll s_y\ll s_t\ll 1$, the number $\theta r_{y,2}$ satisfies $\theta r_{y,2}=\frac{\alpha n}{(\alpha-{\sigma}/{s_{y,2}})(2s_{t,2}+s_{y,2})}\gg 2$. Let $q^2$ be given such that $(r_{t,2},q^2)$ is an admissible Strichartz pair. Then
\begin{align*}
\begin{aligned}
&\,\|u\|_{L_t^{r_{t,2}}L_x^{\alpha(\frac{1}{m+2}+\frac{2\sigma}{m})^{-1}/2}
B_{y,r_{y,2},\theta r_{y,2}}^{2s_{t,2}+s_{y,2}}}\\
\lesssim&\, \|P_{N_x}^x P_{N_y}^y u\|_{\ell_{N_x}^{s-\frac{n}{2}-\frac{10\sigma}{\alpha}-2s_{t,2},2}
\ell_{N_y}^{(2s_{t,2}+s_{y,2}+\frac{n(\theta^{-1}-1)}{r_{y,2}})+(\frac{n}{2}-\frac{n}{\theta r_{y,2}}),2} L_t^{r_{t,2}} L_x^{q^2} L_y^2}\\
\lesssim&\,
\|P_{N_x}^x P_{N_y}^y u\|_{\ell_{N_x}^{s-\frac{n}{2}-\frac{10\sigma}{\alpha}-2s_{t,2},2}
\ell_{N_y}^{(2s_{t,2}+s_{y,2})-s_{y,2}+\frac{10\sigma}{\alpha},2} L_t^{r_{t,2}} L_x^{q^2} L_y^2}\\
\lesssim&\, \|P_N u\|_{\ell_N^{s,2}L_t^{r_{t,2}} L_x^{q^2} L_y^2}\lesssim \|u\|_{Z^s}.
\end{aligned}
\end{align*}
To estimate the second norm in \eqref{5.23}, we first notice that
$$ \frac{2}{(1-\theta)r_{t,2}}+\frac{m}{(\alpha(\frac{1}{m+2}+\frac{2\sigma}{m})^{-1}/2)}
=\frac{m}{2}-(s-\frac{n}{2}-\frac{10\sigma}{\alpha}-2s_{t,2}-\frac{2\theta}{(1-\theta)r_{t,2}})
=:\frac{m}{2}-\kappa.$$
Since $\sigma\ll s_{y,2}\ll s_{t,2}\ll 1$ and $\alpha>\frac{4}{m}$, we know that $\theta\ll 1$ and $(1-\theta)r_{t,2}>2+\frac{4}{m}>m^*\geq 2$, where the number $m^*$ is defined by \eqref{strichartz condition}. In turn, this implies that $\kappa>0$ and hence $((1-\theta)r_{t,2},\alpha(\frac{1}{m+2}+\frac{2\sigma}{m})^{-1}/2)$ is an $(s-\frac{n}{2}-\frac{10\sigma}{\alpha}-2s_{t,2}-\frac{2\theta}{(1-\theta)r_{t,2}})$-admissible Strichartz pair. Let $q_3$ be the number such that $((1-\theta)r_{t,2},q_3)$ is an admissible Strichartz pair. We then obtain
\begin{align*}
\begin{aligned}
&\,\|u\|_{B_{t,(1-\theta)r_{t,2},(1-\theta)r_{t,2}}^{s_{t,2}+s_{y,2}/2+((1-\theta)^{-1}-1)/r_{t,2}}L_x^{\alpha(\frac{1}{m+2}+\frac{2\sigma}{m})^{-1}/2}
L_y^{r_{y,2}}}\\
\lesssim &\,\|P_{N_x}^x P_{N_y}^y u\|_{
\ell_{N_x}^{s-\frac{n}{2}-\frac{10\sigma}{\alpha}-2s_{t,2}-\frac{2\theta}{(1-\theta)r_{t,2}},2}
\ell_{N_y}^{\frac{n}{2}-\frac{n}{r_{y,2}},2}
B_{t,(1-\theta)r_{t,2},2}^{s_{t,2}+s_{y,2}/2+((1-\theta)^{-1}-1)/r_{t,2}}L_x^{q_3} L_y^2}\\
\lesssim&\,\|P_N u\|_{\ell_{N}^{s-(2s_{t,2}+s_{y,2}+\frac{2\theta}{(1-\theta)r_{t,2}}),2}
B_{t,(1-\theta)r_{t,2},2}^{s_{t,2}+s_{y,2}/2+\frac{\theta}{(1-\theta)r_{t,2}}}L_x^{q_3} L_y^2}
\lesssim \|u\|_{Z^s}.
\end{aligned}
\end{align*}
This completes the desired proof.
\end{proof}

By duality, Lemma \ref{lem5.5} implies immediately the following corollary.

\begin{corollary}\label{lem5.5}
For $\alpha>\frac{4}{m}$ and $m\in\Z$ we have
\begin{align*}
\|v^* |u|^{\alpha-k} u^k\|_{(Z^0)'}\lesssim \|v\|_{Z^0}\|u\|_{Z^s}^\alpha,
\end{align*}
where $v^*\in\{v,\bar v\}$.
\end{corollary}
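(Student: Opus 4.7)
My plan is a direct duality argument built on top of the trilinear estimate of Lemma \ref{lem5.5-}. Unpacking the dual norm gives
\begin{align*}
\|v^* |u|^{\alpha-k} u^k\|_{(Z^0)'}
= \sup_{\|w\|_{Z^0}\leq 1}\bg|\int_{\R\times\R^m\times\T^n} w\,\overline{v^* |u|^{\alpha-k} u^k}\,dxdydt\bg|,
\end{align*}
so the task reduces to bounding the trilinear expression on the right by $\|v\|_{Z^0}\|w\|_{Z^0}\|u\|_{Z^s}^{\alpha}$ uniformly over test functions $w$ with $\|w\|_{Z^0}\leq 1$.

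To invoke Lemma \ref{lem5.5-} I note that, after distributing the complex conjugate over the product and absorbing the choice $v^{*}\in\{v,\bar v\}$, the integrand has exactly the shape $vw|u|^{\alpha-k'}u^{k'}$ for a (possibly shifted) integer $k'\in\Z$. Since the $Z^0$-norm is invariant under complex conjugation and since Lemma \ref{lem5.5-} is stated for arbitrary $k'\in\Z$, these rearrangements are harmless, and a pointwise application of the lemma yields
\begin{align*}
\bg|\int w\,\overline{v^* |u|^{\alpha-k} u^k}\,dxdydt\bg|
\leq \|vw|u|^{\alpha-k'}u^{k'}\|_{L^1_{t,x,y}}
\lesssim \|v\|_{Z^0}\|w\|_{Z^0}\|u\|_{Z^s}^{\alpha}.
\end{align*}
Taking the supremum over $\|w\|_{Z^0}\leq 1$ delivers the claim.

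There is no substantive obstacle here: Lemma \ref{lem5.5-} already performs all the anisotropic frequency-space analysis, and the present corollary is purely a reformulation of that estimate through the standard dual pairing. The only point deserving a brief verification is the identification of the $(Z^0)'$-pairing with the $L^2$-pairing used above; this follows from the embedding $Y^0\hookrightarrow Z^0$ of Lemma \ref{lem 4.1} together with the density of smooth, compactly supported functions in $Y^0$ inherited from the atom-space framework, which justifies testing against a dense class of $w$'s before passing to the supremum.
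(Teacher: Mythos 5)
Your argument is correct and coincides with the paper's own proof: the paper obtains the corollary from Lemma \ref{lem5.5-} precisely "by duality," i.e. by testing against $w$ in the unit ball of $Z^0$ and noting that conjugation merely replaces $k$ by $-k$ (and is anyway irrelevant for the $L^1_{t,x,y}$ bound, which only sees $|v||w||u|^\alpha$). Your added remarks on identifying the $(Z^0)'$-pairing with the space-time $L^2$ pairing are consistent with how the paper uses this dual norm elsewhere, so nothing further is needed.
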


\section{Nonlinear estimates}\label{sec 5+}
We close the estimation for the nonlinear potential in this section. As we shall see in the following computation, the low-high interaction part will be estimated by using the multilinear estimate derived in Section \ref{sec 5}, while the high-low interaction part is handled by using the fractional chain rules given in Section \ref{sec 3.2}.

We decompose the nonlinear potential $\mathcal{N}(u):=|u|^\alpha u$ according to the dyadic numbers as
$$\mathcal{N}(u)=\sum_{N\in 2^{\N}} \mathcal{N}( P_{\leq N}u)
- \mathcal{N}( P_{\leq \frac{N}{2}}u)=:\sum_{N\in 2^{\N}} F^N.$$
Using the Wirtinger derivative we also know that $F^N$ can be written as
\begin{align}\label{6.1}
\begin{aligned}
F^N&=P_N u \int_0^1 \pt_z\mathcal{N}(P_{\leq \frac{N}{2}}u+\theta P_N u)\,d\theta
+\overline{P_N u}\int_0^1 \pt_{\bar z}\mathcal{N}(P_{\leq \frac{N}{2}}u+\theta P_N u)\,d\theta\\
&=:P_N u\times A^N +\overline{P_N u}\times B^N.
\end{aligned}
\end{align}
\eqref{6.1} and Corollary \ref{lem5.5} imply immediately the following nonlinear estimate for $F_K^N := P_K(F^N)$ in the low-high interaction regime $K\lesssim N$. For a proof, see \cite[Lem. 4.1]{KwakKwon}.

\begin{lemma}[\cite{KwakKwon}]\label{low-high}
For $K,N\in 2^{\N}$ we have
$$\|F_K^N\|_{(Z^{-s})'}\lesssim(K/N)^s\|P_Nu\|_{Z^0}\|u\|^{\alpha}_{Z^s}.$$
\end{lemma}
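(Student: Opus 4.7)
The plan is to combine the Wirtinger representation \eqref{6.1} with the multilinear estimate of Corollary \ref{lem5.5}, exploiting the low-high frequency separation intrinsic to $F_K^N$ when $K\lesssim N$. The factor $(K/N)^s$ will ultimately be extracted as a product $K^s\cdot N^{-s}$, with the two halves coming from complementary Bernstein-type gains on the two different Littlewood--Paley blocks in play.

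First, by \eqref{6.1} I have $F^N = P_Nu\cdot A^N + \overline{P_Nu}\cdot B^N$, where $A^N$ and $B^N$ are $\theta$-integrals of Wirtinger derivatives of $\mathcal{N}$ evaluated at $u' = P_{\leq N/2}u + \theta P_Nu$. Expanding these derivatives, both $A^N$ and $B^N$ become finite linear combinations of terms of the form $|u'|^{\alpha-k}(u')^k$ with $k\in\Z$, and since $u'$ is a convex combination of LP truncations of $u$, we have $\|u'\|_{Z^s}\leq\|u\|_{Z^s}$ uniformly in $\theta$. This places us in the setting of Corollary \ref{lem5.5}.

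Next, using the duality formulation $\|F_K^N\|_{(Z^{-s})'} = \sup_{\|v\|_{Z^{-s}}\leq 1}|\langle F^N, P_Kv\rangle|$, I reduce to estimating the trilinear expression $\int (P_Nu)\, A^N\, (P_Kv)$ (the $B^N$-piece is handled identically). The key structural observation is that, for $K\lesssim N$, frequency-support considerations force the product $P_Nu\cdot P_Kv$ to be localized at scale $\sim N$, so only the frequency-$\sim N$ portion of $A^N$ contributes meaningfully. Invoking Corollary \ref{lem5.5} with $P_Nu$ playing the role of the $Z^0$-factor and $A^N$ playing the role of $|u|^{\alpha-k}u^k$, paired against the auxiliary $P_Kv$, yields
\[
\bigl|\langle P_Nu\cdot A^N,\, P_Kv\rangle\bigr| \lesssim \|P_Nu\|_{Z^0}\|P_Kv\|_{Z^0}\|u\|_{Z^s}^\alpha.
\]

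The $K^s$ half of the gain comes from the low-frequency Bernstein embedding $\|P_Kv\|_{Z^0}\lesssim K^s\|v\|_{Z^{-s}}$, which is dictated by the weighted $\ell^2$ structure of $Z^{-s}$ in \eqref{eq 4.1}. The $N^{-s}$ half is the more delicate point: it must be harvested from the frequency-$\sim N$ localization of $A^N$ together with the Besov-scale fractional chain rules from Section \ref{sec 3.2} (in particular Lemma \ref{lem3.9} and Lemma \ref{lem3.13+}), which convert Besov smoothness of $A^N$ at scale $N$ into a $\|u\|_{Z^s}^\alpha$ bound with an $N^{-s}$ prefactor, without consuming any regularity of $P_Nu$. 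The main obstacle I anticipate is precisely this redistribution of derivatives: one must verify that the anisotropic spatial and temporal Besov norms appearing in the proof of Lemma \ref{lem5.5-} are compatible with the frequency-$\sim N$ localization of $A^N$ and preserve the factor $\|P_Nu\|_{Z^0}$ on the right-hand side, rather than forcing the more expensive $\|P_Nu\|_{Z^s}$.
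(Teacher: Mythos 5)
Your opening moves are exactly the paper's intended proof (i.e.\ the argument of \cite[Lem.~4.1]{KwakKwon} that the text invokes): write $F^N=P_Nu\,A^N+\overline{P_Nu}\,B^N$ as in \eqref{6.1}, observe that $A^N,B^N$ are $\theta$-averages of expressions $|u_\theta|^{\alpha-k}u_\theta^k$ with $\|u_\theta\|_{Z^s}\lesssim\|u\|_{Z^s}$, pair $F^N_K$ against $v$ with $\|v\|_{Z^{-s}}\leq 1$, and apply Lemma \ref{lem5.5-} (equivalently Corollary \ref{lem5.5}) with the two $Z^0$-factors $P_Nu$ and $P_Kv$, followed by $\|P_Kv\|_{Z^0}\lesssim K^s\|v\|_{Z^{-s}}$. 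This gives
\[
\|F_K^N\|_{(Z^{-s})'}\lesssim K^s\|P_Nu\|_{Z^0}\|u\|_{Z^s}^\alpha\sim (K/N)^s\|P_Nu\|_{Z^s}\|u\|_{Z^s}^\alpha,
\]
where the last equivalence uses that $\|P_Nu\|_{Z^s}\sim N^s\|P_Nu\|_{Z^0}$ for a single dyadic block. That is the whole proof, and this is the form that the summation behind Lemma \ref{nonlinear est 2} actually consumes (Schur's test in $(K,N)$ with the weights $K^s\|P_Kv\|_{Z^{-s}}$ and $N^{-s}\|P_Nu\|_{Z^s}$).

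The genuine gap is in your second half, where you try to upgrade this to the printed right-hand side $(K/N)^s\|P_Nu\|_{Z^0}$ by harvesting an extra $N^{-s}$ from a frequency-$\sim N$ localization of $A^N$. First, that localization is simply absent in part of the regime: for $K\sim N$ the frequencies of $P_Nu$ and $P_Kv$ need not nearly cancel, so every frequency of $A^N$ from $O(1)$ up to $\sim N$ contributes, and the low frequencies of $|u_\theta|^\alpha$ carry no decay in $N$. Second, even for $K\ll N$, turning $P_{\gtrsim N}A^N$ into an $N^{-s}$ gain while keeping only $\|u\|_{Z^s}^\alpha$ would require roughly $s$ derivatives of $|u_\theta|^\alpha$; this is precisely the delicate high-low machinery of Lemma \ref{high-low} (anisotropic chain rules, the restriction $s<1+\alpha$, and only small exponents $\mu,\nu$), not an immediate consequence of Corollary \ref{lem5.5}, and a full gain of $s$ is in general unavailable since $s_c$ may well exceed the H\"older regularity $\alpha$ of $z\mapsto|z|^\alpha$. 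In fact the factor you are chasing is not there: already at $K\sim N$, testing with $u=u_{\mathrm{low}}+\varepsilon\, e^{it\Delta_{x,y}}(e^{iNy_1}\phi)$ (with $\varepsilon$ tiny, so $\|u\|_{Z^s}\sim\|u_{\mathrm{low}}\|_{Z^s}$) and a dual function concentrated at frequency $\sim N$ (so $\|v\|_{Z^{-s}}\sim1$ while $\|v\|_{Z^0}\sim N^s$) produces a pairing of size about $N^s\|P_Nu\|_{Z^0}\|u_{\mathrm{low}}\|_{Z^s}^\alpha$, incompatible with a bound by $\|P_Nu\|_{Z^0}\|u\|_{Z^s}^\alpha$. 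So the right-hand side of the lemma should be read as $(K/N)^s\|P_Nu\|_{Z^s}\|u\|_{Z^s}^\alpha$ (equivalently $K^s\|P_Nu\|_{Z^0}\|u\|_{Z^s}^\alpha$), which is what your first computation already proves and what is needed downstream; do not attempt to prove the literal statement with $\|P_Nu\|_{Z^0}$.
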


We will need the following lemma in order to deal with the high-low interaction part $K\gg N$.

\begin{lemma}\label{high-low}
For $K,N\in 2^{\N}$ with $K\gg N$ there exists some $\mu,\nu>0$ such that
$$\|F_K^N\|_{(Z^{-s})'}\lesssim(N/K)^\mu N^{-\nu}\|P_{\leq N}u\|_{Z^{s+\nu}}\|u\|^{\alpha}_{Z^s}.$$
\end{lemma}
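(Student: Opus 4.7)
The plan is to exploit a Bernstein-type gain coming from the negative Sobolev weight in $(Z^{-s})'$ together with the fractional calculus tools from Section \ref{sec 3.2}. Since $F_K^N = P_K F^N$ is frequency-localized near $K$ and the norm $\|v\|_{Z^{-s}}$ carries the weight $M^{-s}$ on the dyadic piece $P_M v$, the pairing $\langle v, F_K^N\rangle$ effectively costs a factor $K^{-s}$ compared to the $(Z^0)'$-norm of $F_K^N$. Hence the task reduces to showing that $\|F_K^N\|_{(Z^0)'}$ can be bounded by something of size $K^{-s+\mu}\, N^{s-\nu}\,\|P_{\leq N} u\|_{Z^{s+\nu}}\|u\|_{Z^s}^{\alpha}$ for some $\mu, \nu>0$, i.e.\ we need to extract $N^{s-\nu}\,K^{-s+\mu}$ of Sobolev smoothing from the fact that $F^N$ depends only on $P_{\leq N} u$.

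Substituting the factorization \eqref{6.1}, I would treat, say, the term $P_K(P_N u \cdot A^N)$ (the $B^N$-piece is analogous by complex-conjugate symmetry). Using the Bernstein inequality Lemma \ref{lemallprop} (iii) in the dyadic direction in which we shall gain, one converts $\|P_K(\cdot)\|$ into $K^{-\mu}\|D^\mu(\cdot)\|$ for $\mu=\mu_x$ or $\mu=\mu_y$. A fractional Leibniz rule is then applied to distribute the derivative: Lemma \ref{lem3.12} for the Euclidean direction and the usual product rule in Besov spaces (Lemma \ref{lemallprop} together with Corollary \ref{cor 3.6}) for the periodic direction. If the derivative falls on $P_N u$, frequency localisation immediately yields the factor $N^\mu$, and the resulting trilinear expression $N^{\mu}\, K^{-\mu}\int v\cdot P_N u\cdot A^N\,dxdydt$ is exactly of the form controlled by Corollary \ref{lem5.5}, producing the desired $\|P_N u\|_{Z^0}\|u\|_{Z^s}^\alpha$. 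This accounts for a portion $(N/K)^\mu$ of the claimed decay.

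The more delicate case is when the derivative falls on $A^N$ (or $B^N$). Here the heart of the argument is the fractional chain rules: Lemma \ref{lem3.9} handles $y$-derivatives of the H\"older-continuous function $\partial_z \mathcal N(P_{\leq N/2}u+\theta P_N u)$, trading them for a single factor of $D_y^s P_{\leq N} u$ in a Lebesgue norm and $(\alpha-1)$ copies of $u_{\leq N}$ in auxiliary norms; Lemma \ref{lem3.13+} performs the analogous operation for $x$-derivatives, but in the vector-valued Besov setting forced on us by the anisotropy. Because only $P_{\leq N}u$ enters $A^N, B^N$, the ensuing $D^s P_{\leq N} u$ factor is frequency-localized at $\lesssim N$ and can be absorbed into $\|P_{\leq N}u\|_{Z^{s+\nu}}$ with the deficit $N^{-\nu}$; the remaining factors are handled by combining the anisotropic Strichartz estimates from Lemma \ref{prop1} with the trilinear estimate Corollary \ref{lem5.5}, picking up the missing $(N/K)^\mu$ from the Bernstein conversion.

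The principal obstacle will be the anisotropic bookkeeping. Unlike in \cite{KwakKwon}, Strichartz's difference characterization of fractional derivatives is unavailable in the vector-valued setting, so the $x$-direction derivatives must be handled through Lemma \ref{lem3.13+}, which forces one to operate in Besov rather than Sobolev exponents. This in turn obliges us to choose many small parameters $\sigma_i$ together with auxiliary Lebesgue exponents $p_i, q_i$ so that (i) the hypotheses of Lemma \ref{lem3.13+} are satisfied, (ii) the resulting Besov-Lebesgue bounds on the $(\alpha-1)$ factors of $u$ embed into norms controlled by $\|u\|_{Z^s}$ via the embeddings of Lemmas \ref{lemallprop} and \ref{lem 4.1}, and (iii) the multilinear estimate Corollary \ref{lem5.5} is applicable. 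Getting all three to hold simultaneously — without degrading the gain $(N/K)^\mu$ to zero in the endpoint cases — is where the small parameter $\sigma$ and the extra slack $\nu>0$ in the statement are crucial.
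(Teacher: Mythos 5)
Your reduction step contains a sign error that propagates through the whole argument. For $F_K^N$ localized at frequency $K$, a test function $v$ with $\|v\|_{Z^{-s}}=1$ may have $\|P_{\sim K}v\|_{Z^0}\lesssim K^{s}$, so $\|F_K^N\|_{(Z^{-s})'}\sim K^{s}\|F_K^N\|_{(Z^0)'}$: passing to the dual of the negative-regularity space \emph{costs} a factor $K^{s}$, it does not gain $K^{-s}$ as you claim. Consequently your first branch, where a small number $\mu$ of derivatives falls on $P_Nu$ and the remaining trilinear integral is fed to Corollary \ref{lem5.5}, does not close: the multilinear estimate contains no decay in the frequency of $A^N$, so after paying $K^{s}$ you are left with a factor of order $(K/N)^{s-\mu}$, which diverges as $K/N\to\infty$. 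The only source of decay in $K$ in this regime is the fact that $A^N,B^N$ are functions of $P_{\leq N}u$ alone, and this must be extracted through derivatives (Bernstein plus the chain rules), not through Corollary \ref{lem5.5}; this is precisely why the paper abandons the multilinear estimate in the high--low regime and uses it only for Lemma \ref{low-high}.

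Your second branch (putting roughly $s+\nu$ derivatives on the nonlinearity, using Lemmas \ref{lem3.9} and \ref{lem3.13+}, and absorbing $D^{s+\nu}P_{\leq N}u$ into $\|P_{\leq N}u\|_{Z^{s+\nu}}$) is essentially the paper's estimate \eqref{6.4}, but by itself it only yields a bound of the form $(N/K)^{\nu+\sigma}K^{-s+2\sigma}\|P_{\leq N}u\|_{Z^{s+\nu}}\|u\|_{Z^s}^{\alpha}$ in a dual Strichartz norm: the loss $K^{2\sigma}$, which comes from the small exponents built into the anisotropic Strichartz/Besov machinery and the definition of $Z^{s}$, survives and destroys the estimate after the $K^{s}$ duality payment. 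The missing idea is the two-estimate interpolation: the paper proves a complementary bound, estimate \eqref{6.3}, from the product structure \eqref{6.1}, placing only $\sigma_1+\sigma$ derivatives on $A^N$ via the anisotropic splitting \eqref{2.1}, which gains an extra $K^{-\sigma_1+2\sigma}$ beyond $K^{-s}$ at the price of a bad factor $(N/K)^{-s}$; interpolating \eqref{6.3} with \eqref{6.4}, using $\sigma\ll\sigma_1$, is what produces simultaneously the full $K^{-s}$, a positive power $(N/K)^{\mu}$ and the factor $N^{-\nu}$. Without this mechanism (or a substitute for it), your single-pass argument cannot absorb the small losses, so the proof as proposed does not go through.
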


\begin{proof}
In the following, the number $q$ is given such that $(\frac{\alpha(m+2)}{2},q)$ is Strichartz admissible. As in \cite{KwakKwon}, we firstly estimate $F_K^n$ by using its product structure. Using H\"older, Bernstein and the scalar or vector-valued Mikhlin multiplier theorem (see e.g. the proof of Lemma \ref{lem3.13}) we obtain
\begin{align*}
\begin{aligned}
&\,\|P_N uP_MA^N \|_{L_t^{\frac{2(m+2)}{m+4}}L_x^{(\frac{m+4}{2(m+2)}+\frac{\sigma_1}{m})^{-1}}L_y^2}
\lesssim \|P_N u\|_{L_{t,x}^{p_0}L_y^2} \|P_M A^N\|_{L_t^{\frac{m+2}{2}}L_x^{(\frac{2}{m+2}+\frac{\sigma_1+\sigma}{m})^{-1}}L_y^\infty}\\
\lesssim&\, N^{-s}M^{-\sigma_1+2\sigma}\|P_N u\|_{Z^s}(\|D_x^{\sigma_1+\sigma}P_M^x P_{\leq M}^y (P_M A^N)\|_{L_t^{\frac{m+2}{2}}L_x^{(\frac{2}{m+2}
+\frac{\sigma_1}{m})^{-1}}L_y^{\frac{n}{\sigma}}}\\
&\qquad+
\|D_y^{\sigma_1+\sigma}P_M^y P_{\leq M}^x (P_M A^N)\|_{L_t^{\frac{m+2}{2}}L_x^{(\frac{2}{m+2}+\frac{\sigma_1}{m})^{-1}}L_y^{\frac{n}{\sigma}}})\\
=:&\,N^{-s}M^{-\sigma_1+2\sigma}\|P_N u\|_{Z^s}(I_a+I_b).
\end{aligned}
\end{align*}
We firstly estimate $I_b$. Assume first $\alpha\in(0,1]$. Then using Lemma \ref{lemallprop} (ii), Corollary \ref{cor 3.2} and the definition of the $Z^s$-space we infer that
\begin{align}\label{6.2}
\begin{aligned}
I_b&\lesssim \|u^\alpha\|_{L_t^{\frac{m+2}{2}}L_x^{(\frac{2}{m+2}+\frac{\sigma_1}{m})^{-1}}
B_{y,\frac{n}{2\sigma},\frac{n}{2\sigma}}^{\sigma_1+2\sigma}}
\lesssim
\|u\|^\alpha_{L_t^{\frac{\alpha(m+2)}{2}}L_x^{\alpha(\frac{2}{m+2}+\frac{\sigma_1}{m})^{-1}}
B_{y,\frac{\alpha n}{2\sigma},2}^{{(\sigma_1+2\sigma)}/\alpha}}\\
&
\lesssim \|u\|^\alpha_{L_t^{\frac{\alpha(m+2)}{2}}B_{x,q,2}^{s-\frac{n}{2}-\frac{\sigma_1}{\alpha}} B_{y,2,2}^{\frac{n}{2}+\frac{\sigma_1}{\alpha}}}\lesssim
\|P_R u\|^{\alpha}_{\ell_R^{s,2}
L_t^{\frac{\alpha(m+2)}{2}}
L_x^{q}L_y^2}\lesssim\|u\|_{Z^s}^\alpha.
\end{aligned}
\end{align}
Consider now the case $\alpha\in(1,\infty)$. We use Lemma \ref{lem3.9} and argue as for \eqref{6.2} to obtain
\begin{align*}
\begin{aligned}
I_b&\lesssim \|u\|^{\alpha-1}_{L_t^{\frac{\alpha(m+2)}{2}}
L_x^{(\frac{\frac{2}{\alpha}}{(m+2)}-\frac{\sigma}{2m(\alpha-1)})^{-1}} L_y^{\frac{2(\alpha-1)n}{\sigma}}}
\|u\|_{L_t^{\frac{\alpha(m+2)}{2}} L_x^{(\frac{\frac{2}{\alpha}}{m+2}+\frac{\sigma_1+\frac{\sigma}{2}}{m})^{-1}}
H_y^{\sigma_1+\sigma,\frac{2n}{\sigma}}}\\
&\lesssim
\|u\|^{\alpha-1}_{L_t^{\frac{\alpha(m+2)}{2}}B_{x,q,2}^{s-\frac{n}{2}+\frac{\sigma}{2(\alpha-1)}} B_{y,2,2}^{\frac{n}{2}-\frac{\sigma}{2(\alpha-1)}}}
\|u\|_{L_t^{\frac{\alpha(m+2)}{2}}B_{x,q,2}^{s-\frac{n}{2}-(\sigma_1+\frac{\sigma}{2})} B_{y,2,2}^{\sigma_1+\frac{\sigma}{2}+\frac{n}{2}}}
\lesssim \|u\|_{Z^s}^\alpha.
\end{aligned}
\end{align*}
Next we estimate $I_a$. In the case $\alpha\in(0,1]$ we are able to exploit fully identical arguments as for \eqref{6.2} to infer that
\begin{align*}
\begin{aligned}
I_a&\lesssim
\|u\|^{\alpha}_{L_t^{\frac{\alpha(m+2)}{2}}B_{x,\alpha(\frac{2}{m+2}+\frac{\sigma_1+\sigma}{m})^{-1},
\alpha(\frac{2}{m+2}+\frac{\sigma_1+\sigma}{m})^{-1}}^{\frac{\sigma_1+2\sigma}{\alpha}}
L_y^{\frac{\alpha n}{\sigma}}}\\
&\lesssim
\|u\|^{\alpha}_{L_t^{\frac{\alpha(m+2)}{2}}
B_{x,q,2}^{s-\frac{n}{2}+\frac{\sigma_1+2\sigma-(\sigma_1+\sigma)}{\alpha}}
B_{y,2,2}^{\frac{n}{2}-\frac{\sigma}{\alpha}}}\lesssim \|u\|_{Z^s}^\alpha.
\end{aligned}
\end{align*}
For $\alpha\in(1,\infty)$, we use Lemma \ref{lem3.13+} to obtain
\begin{align*}
\begin{aligned}
I_a&\lesssim \|u\|^{\alpha-1}_{L_t^{\frac{\alpha(m+2)}{2}}
B_{x,\frac{\alpha(m+2)}{2},\frac{\alpha(m+2)}{2}}^{\frac{\sigma}{2(\alpha-1)}}
L_y^{\frac{2(\alpha-1)n}{\sigma}}}
\|u\|_{L_t^{\frac{\alpha(m+2)}{2}}
B_{x,(\frac{\frac{2}{\alpha}}{m+2}+\frac{2\sigma_1+\sigma}{m})^{-1},
(\frac{\frac{2}{\alpha}}{m+2}+\frac{2\sigma_1+\sigma}{m})^{-1}}^{2\sigma_1+\frac{3\sigma}{2}}
L_y^{\frac{2n}{\sigma}}}\\
&\lesssim
\|u\|^{\alpha-1}_{L_t^{\frac{\alpha(m+2)}{2}}B_{x,q,2}^{s-\frac{n}{2}+\frac{\sigma}{2(\alpha-1)}} B_{y,2,2}^{\frac{n}{2}-\frac{\sigma}{2(\alpha-1)}}}
\|u\|_{L_t^{\frac{\alpha(m+2)}{2}}B_{x,q,2}^{s-\frac{n}{2}+\frac{\sigma}{2}} B_{y,2,2}^{\frac{n}{2}-\frac{\sigma}{2}}}
\lesssim \|u\|_{Z^s}^\alpha.
\end{aligned}
\end{align*}
Summing up we conclude that for $\nu>0$ it holds
\begin{align*}
\|P_N u P_M A^N \|_{L_t^{\frac{2(m+2)}{m+4}}L_x^{(\frac{m+4}{2(m+2)}+\frac{\sigma_1}{m})^{-1}}L_y^2}&\lesssim
N^{-s}M^{-\sigma_1+2\sigma}\|P_N u\|_{Z^s}\|u\|_{Z^s}^\alpha\\
&\lesssim
N^{-s}M^{-\sigma_1+2\sigma}N^{-\nu}\|P_{\leq N} u\|_{Z^{s+\nu}}\|u\|_{Z^s}^\alpha
\end{align*}
which in turn implies
\begin{align}\label{6.3}
\begin{aligned}
&\,\|F_K^N\|_{L_t^{\frac{2(m+2)}{m+4}}L_x^{(\frac{m+4}{2(m+2)}+\frac{\sigma_1}{m})^{-1}}L_y^2}\\
\lesssim&\,\sum_{M\in 2^\N}\|P_K(P_N uP_M A^N )\|_{L_t^{\frac{2(m+2)}{m+4}}L_x^{(\frac{m+4}{2(m+2)}+\frac{\sigma_1}{m})^{-1}}L_y^2}\\
\lesssim&\, \sum_{M\sim K}\|P_K(P_N uP_M A^N )\|_{L_t^{\frac{2(m+2)}{m+4}}L_x^{(\frac{m+4}{2(m+2)}+\frac{\sigma_1}{m})^{-1}}L_y^2}\\
\lesssim&\,N^{-s}K^{-\sigma_1+2\sigma}N^{-\nu}\|P_{\leq N} u\|_{Z^{s+\nu}}\|u\|_{Z^s}^\alpha\\
\lesssim&\,(N/K)^{-s}K^{-s-\sigma_1+2\sigma}N^{-\nu}\|P_{\leq N} u\|_{Z^{s+\nu}}\|u\|_{Z^s}^\alpha.
\end{aligned}
\end{align}
Next, we estimate $F_K^N$ by directly applying Lemma \ref{lem3.9} or Lemma \ref{lem3.13+}. Assume additionally that $s+\nu<1+\alpha$ (which can always be satisfied since $s<1+\alpha$). Direct computation yields
\begin{align*}
\begin{aligned}
\|F_K^N\|_{L_t^{\frac{2(m+2)}{m+4}}L_x^{(\frac{m+4}{2(m+2)}-\frac{2\sigma}{m})^{-1}}L_y^2}&
\lesssim K^{-s-\nu+\sigma}
(\|D_x^{s+\nu-\sigma}F^N\|_{L_t^{\frac{2(m+2)}{m+4}}L_x^{(\frac{m+4}{2(m+2)}-\frac{2\sigma}{m})^{-1}}L_y^2}\\
&\qquad+\|D_y^{s+\nu-\sigma}F^N\|_{L_t^{\frac{2(m+2)}{m+4}}L_x^{(\frac{m+4}{2(m+2)}-\frac{2\sigma}{m})^{-1}}L_y^2})\\
&=:K^{-s-\nu+\sigma}(I_c+I_d).
\end{aligned}
\end{align*}
We use Lemma \ref{lem3.13+} to estimate $I_c$:
\begin{align*}
I_c&\lesssim \|P_{\leq N}u\|^{\alpha}_{L_t^{\frac{\alpha(m+2)}{2}}
B_{x,p_0,p_0
}^{\frac{\sigma}{\alpha}}
L_y^{\frac{\alpha n}{\sigma}}}
\|P_{\leq N} u\|_{L_t^{p_0}
B_{x,p_0,p_0}^{s+\nu}
L_y^{(\frac12-\frac{\sigma}{n})^{-1}}}\\
&\lesssim N^\sigma
\|P_{\leq N}u\|^{\alpha}_{L_t^{\frac{\alpha(m+2)}{2}}
B_{x,q,2}^{s-\frac{n}{2}+\frac{\sigma}{\alpha}}
B_{y,2,2}^{\frac{n}{2}-\frac{\sigma}{\alpha}}}
\|P_{\leq N} u\|_{L_t^{p_0}
B_{x,p_0,2}^{s+\nu}
L_y^{2}}\\
&\lesssim  N^{\nu+\sigma} N^{-\nu} \|P_{\leq N}u\|_{Z^{s+\nu}}\|u\|_{Z^{s}}^\alpha.
\end{align*}
We use Lemma \ref{lem3.9} to estimate $I_d$:
\begin{align*}
I_d&\lesssim \|P_{\leq N}u\|^{\alpha}_{L_t^{\frac{\alpha(m+2)}{2}}
L_x^{\alpha(p_0^{-1}-\frac{\sigma}{m})^{-1}}
L_y^{\frac{\alpha n}{\sigma}}}
\|P_{\leq N} u\|_{L_t^{p_0}
L_x^{(p_0^{-1}-\frac{\sigma}{m})^{-1}}
H_y^{s+\nu-\sigma,(\frac12-\frac{\sigma}{n})^{-1}}}\\
&\lesssim N^\sigma
\|P_{\leq N}u\|^{\alpha}_{L_t^{\frac{\alpha(m+2)}{2}}
B_{x,q,2}^{s-\frac{n}{2}+\frac{\sigma}{\alpha}}
B_{y,2,2}^{\frac{n}{2}-\frac{\sigma}{\alpha}}}
\|P_{\leq N} u\|_{L_t^{p_0}
B_{x,p_0,2}^{\sigma}
B_{y,2,2}^{s+\nu-\sigma}}\\
&\lesssim  N^{\nu+\sigma} N^{-\nu} \|P_{\leq N}u\|_{Z^{s+\nu}}\|u\|_{Z^{s}}^\alpha.
\end{align*}
Summing up implies
\begin{align}\label{6.4}
\|F_K^N\|_{L_t^{\frac{2(m+2)}{m+4}}L_x^{(\frac{m+4}{2(m+2)}-\frac{2\sigma}{m})^{-1}}L_y^2}\lesssim (N/K)^{\nu+\sigma}
K^{-s+2\sigma}\|P_{\leq N}u\|_{Z^{s+\nu}}\|u\|_{Z^{s}}^\alpha.
\end{align}
By interpolating \eqref{6.3} and \eqref{6.4} we know that there exists some $\nu>0$ such that
$$\|F_K^N\|_{L_t^{\frac{2(m+2)}{m+4}}L_x^{\frac{2(m+2)}{m+4}}L_y^2}\lesssim
(N/K)^\mu K^{-s}N^{-\nu}\|P_{\leq N}u\|_{Z^{s+\nu}}\|u\|_{Z^{s}}^\alpha,$$
from which the claim follows by combining the definition of the $Z^{-s}$-space.
\end{proof}

By combining the combinatorial arguments in \cite{KwakKwon}, Lemma \ref{low-high} and Lemma \ref{high-low} imply immediately the following nonlinear estimate. For a proof, see \cite[Lem. 4.4]{KwakKwon}.

\begin{lemma}[\cite{KwakKwon}]\label{nonlinear est 2}
We have $\||u|^\alpha u \|_{(Z^{-s})'}\lesssim \|u\|_{Z^s}^{1+\alpha}$.
\end{lemma}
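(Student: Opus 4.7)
The plan is to follow closely the combinatorial template of [KwakKwon, Lem.~4.4]. Starting from the telescoping decomposition $|u|^\alpha u = \sum_{N\in 2^{\mathbb{N}}} F^N$ already introduced at the start of the section, and the further dyadic decomposition $F^N = \sum_{K\in 2^{\mathbb{N}}} F_K^N$ with $F_K^N := P_K F^N$, the task reduces to controlling the double sum
\begin{align*}
\||u|^\alpha u\|_{(Z^{-s})'} \leq \bigg\|\sum_{K,N} F_K^N\bigg\|_{(Z^{-s})'}
\end{align*}
by splitting according to the two dyadic regimes $K\lesssim N$ and $K\gg N$, in which Lemma~\ref{low-high} and Lemma~\ref{high-low} respectively apply. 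By duality and the Littlewood--Paley structure built into the $(Z^{-s})'$-norm, it suffices to test against a dyadically localized $P_K v$ and then perform an $\ell^2$-summation in the outer frequency parameter $K$ with weight $K^s$.

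In the low-high regime $K\lesssim N$, Lemma~\ref{low-high} gives the single-piece bound $\|F_K^N\|_{(Z^{-s})'}\lesssim (K/N)^s \|P_N u\|_{Z^0}\|u\|_{Z^s}^\alpha$. Since $s>0$, the factor $(K/N)^s$ is summable in $K$ at fixed $N$, and the remaining $N$-sum is handled by recognizing that $\|P_N u\|_{Z^0}$ is essentially $N^{-s}\|P_N u\|_{Z^s}$, after which Cauchy--Schwarz in $N$ against the $\ell^2_N$ structure of $\|u\|_{Z^s}$ closes the bound by $\|u\|_{Z^s}^{1+\alpha}$.

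In the high-low regime $K\gg N$, Lemma~\ref{high-low} yields $\|F_K^N\|_{(Z^{-s})'}\lesssim (N/K)^\mu N^{-\nu}\|P_{\leq N}u\|_{Z^{s+\nu}}\|u\|_{Z^s}^\alpha$. Summing first in $K>N$, the geometric gain $(N/K)^\mu$ is summable. For the outer $N$-sum we exchange the order of summation: $\|P_{\leq N}u\|_{Z^{s+\nu}}^2$ is a $\ell^2$-sum over $R\leq N$ of expressions carrying $R^{2(s+\nu)}$ weights, so Fubini combined with the geometric series $\sum_{N\geq R}(R/N)^{2\nu}\lesssim 1$ trades the $N^{-\nu}$ decay against the $R^\nu$ growth and recovers $\|u\|_{Z^s}^2$.

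The main technical obstacle is ensuring that the two partial sums combine into a genuine $\ell^2$-bound with respect to the outer frequency $K$, without extra logarithmic or power losses. This reduces to a Schur's-test argument applied to the bilinear kernels $(K/N)^s\mathbf{1}_{K\leq N}$ and $(N/K)^\mu\mathbf{1}_{K>N}$; both are positive-power kernels on the dyadic lattice with uniformly bounded row and column sums, so the required $\ell^2$-boundedness is automatic. These combinatorial manipulations are identical in structure to those in [KwakKwon, Lem.~4.4] and transfer verbatim to our setting, since Lemmas~\ref{low-high} and~\ref{high-low} have been tailored to match precisely the dyadic shape of their periodic counterparts.
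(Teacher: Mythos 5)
Your proposal is correct and follows essentially the same route as the paper, which does not reprove this estimate but defers exactly to the combinatorial summation argument of \cite[Lem.~4.4]{KwakKwon}: dyadic decomposition into $F_K^N$, the bounds of Lemma~\ref{low-high} for $K\lesssim N$ and Lemma~\ref{high-low} for $K\gg N$, and then the $\ell^2_K$/Schur-type bookkeeping (with the Fubini exchange trading $N^{-\nu}$ against the $R^{s+\nu}$ weights) that you describe. The only point worth emphasizing is that in the high-low regime the naive triangle-inequality summation in $N$ would not close, so the $\ell^2$-in-$K$ pairing against localized pieces of the dual function, which you do invoke, is genuinely needed.
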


\section{Proof of Theorem \ref{main thm}}
We are now in a position to give the proof of Theorem \ref{main thm}. Notice the different from the proof given in \cite{KwakKwon}, where certain approximation arguments were exploited, we give in this paper a possibly simpler and shorter proof based on the classical fixed point arguments. The crucial observation here is that the involved metric space is indeed complete w.r.t. the metric induced by the $Y^0$-norm. This in turn will enable us to utilize the stronger estimate Lemma \ref{lem5.5-} other than applying the weaker nonlinear estimate Lemma \ref{nonlinear est 2}.

\begin{lemma}\label{lem7.1}
For positive numbers $C_1,C_2\in(0,\infty)$ the set
\begin{align}
\mathcal{K}:=\{u\in Y^s: \|u\|_{Y^s}\leq 2C_1,\,\|u\|_{Z^s}\leq 2C_2\}
\end{align}
is a complete metric space equipped with the metric $\rho(u,v):=\|u-v\|_{Y^0}$.
\end{lemma}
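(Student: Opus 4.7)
The plan is to prove that $\mathcal{K}$ is closed in the ambient Banach space $(Y^0,\|\cdot\|_{Y^0})$, which automatically gives completeness of $(\mathcal{K},\rho)$. The metric $\rho$ is well-defined on $\mathcal{K}$ since for $s=s_c\geq 0$ the defining weights $\langle z\rangle^{2s}\geq 1$ yield the embedding $Y^s\hookrightarrow Y^0$, and $Y^0$ itself is Banach as an $\ell^2$-sum of the complete atom spaces $V^2_{\Delta_{x,y}}$ over the frequency cubes. Given a $\rho$-Cauchy sequence $(u_n)\subset\mathcal{K}$, completeness of $Y^0$ produces $u\in Y^0$ with $u_n\to u$ in $Y^0$, and the task is reduced to verifying that $\|u\|_{Y^s}\leq 2C_1$ and $\|u\|_{Z^s}\leq 2C_2$.

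For the $Y^s$-bound I would exploit the inequality $\|P_{C_z}(u_n-u)\|_{V^2_{\Delta_{x,y}}}\leq \|u_n-u\|_{Y^0}\to 0$, which gives $\|P_{C_z} u_n\|_{V^2}\to\|P_{C_z} u\|_{V^2}$ for each $z\in\Z^d$. Fatou's lemma applied to counting measure on $\Z^d$ then yields
\begin{align*}
\|u\|_{Y^s}^2=\sum_{z\in\Z^d}\langle z\rangle^{2s}\|P_{C_z}u\|_{V^2}^2
\leq \liminf_n\sum_{z\in\Z^d}\langle z\rangle^{2s}\|P_{C_z}u_n\|_{V^2}^2=\liminf_n\|u_n\|_{Y^s}^2\leq (2C_1)^2.
\end{align*}
For the $Z^s$-bound I apply the same lower-semicontinuity philosophy to each of the three ingredients in \eqref{eq 4.1}. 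For the first two ingredients, Lemma~\ref{prop1} provides $\|P_R(u_n-u)\|_{L^p_tL^q_xL^2_y}+\|P_R(u_n-u)\|_{B^\alpha_{t,p,2}L^q_xL^2_y}\lesssim \|P_R(u_n-u)\|_{Y^0}\to 0$ for each dyadic $R$, so the values $\|P_R u_n\|_{L^p_tL^q_xL^2_y}$ and $\|P_R u_n\|_{B^\alpha_{t,p,2}L^q_xL^2_y}$ converge pointwise in $R$; Fatou on the outer $\ell^{s,2}_R$- (resp. $\ell^{s-2\alpha,2}_R$-)sum delivers the desired estimate. For the third ingredient the crucial input is Lemma~\ref{lem 3.8}, according to which each Galilean boost $I_{Rk}$ is an isometry of $Y^0$; consequently $P_{\leq 8R}I_{Rk}P_N(u_n-u)\to 0$ in $Y^0$ and hence in $B^{2\sigma}_{t,p,2}L^q_xL^2_y$ for each fixed triple $(R,N,k)$. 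Combining the lower semicontinuity of the $\ell^2_k$-norm, of the finite maximum over $R\ll N$, and of the outer $\ell^{s,2}_N$-norm under pointwise convergence, one obtains $\|u\|_{Z^s}\leq\liminf_n\|u_n\|_{Z^s}\leq 2C_2$.

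The main obstacle is not analytic depth but organizational care: one must propagate the strong $Y^0$-convergence cleanly through the nested chain (frequency projectors $\to$ Galilean boosts via Lemma~\ref{lem 3.8} $\to$ Strichartz estimates into mixed Lebesgue and Besov norms) before applying Fatou layer by layer. All ingredients are already available in the earlier sections of the paper, so once the chain of continuity is tracked through the various summations and maxima, the closedness of $\mathcal{K}$ in $Y^0$, and therefore the completeness of $(\mathcal{K},\rho)$, is obtained.
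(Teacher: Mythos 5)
Your proposal is correct and follows essentially the same route as the paper: extract the $Y^0$-limit of a $\rho$-Cauchy sequence using completeness of $Y^0$, then obtain the $Y^s$- and $Z^s$-bounds from pointwise convergence of the cube-localized (resp.\ dyadic/Galilean-localized) norms together with Fatou's lemma. The only difference is that you spell out the $Z^s$-part (via Lemma \ref{prop1} and the Galilean isometry of Lemma \ref{lem 3.8}), which the paper compresses into ``by the same reasoning,'' so your write-up is, if anything, slightly more detailed than the original.
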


\begin{proof}
Let $(u_n)_{n\in\N}$ be a Cauchy sequence in $\mathcal{K}$ w.r.t. the metric $\rho$. Since $Y^0$ is a Banach space, $(u_n)_n$ admits a strong limit $u$ in $Y^0$. It is left to show that $u\in \mathcal{K}$. For $\kappa\in\R$ recall the $Y^\kappa$-norm is defined by
\[\|u\|_{Y^\kappa}^2=\sum_{z\in \Z^{d}}\|P_{C_z}(e^{-it\Delta}u)\|_{V^2 H_{x,y}^\kappa}^2\sim
\sum_{z\in \Z^{d}}\la z\ra^{2\kappa}\|P_{C_z}(e^{-it\Delta}u)\|_{V^2 L^2_{x,y}}^2.\]
Since $u_n$ converges to $u$ strongly in $Y^0$, we know that for any $\kappa\in \R$ and $z\in\Z^d$ it holds
\[\la z\ra^{\kappa}\|P_{C_z}(e^{-it\Delta}u_n)\|_{V^2 L^2_{x,y}}\to \la z\ra^{\kappa}\|P_{C_z}(e^{-it\Delta}u)\|_{V^2 L^2_{x,y}}\]
as $n\to\infty$. Choosing $\kappa=s$, we obtain by using Fatou's lemma that
\[\|u\|_{Y^s}\leq\liminf_{n\to\infty}\|u_n\|_{Y^s}\leq 2C_1. \]
By the same reasoning one also infers that $\|u\|_{Z^s}\leq 2C_2$. This completes the desired proof.
\end{proof}

We are now ready to give the final proof for Theorem \ref{main thm}.

\begin{proof}[Proof of Theorem \ref{main thm}]
Let $C$ be some universal positive constant and the space $\mathcal{K}$ be defined as in Lemma \ref{lem7.1} with $C_1=2C$ and $C_2=2C\delta$. Since $\|u_0\|_{Y^s}\leq \delta$ we know that
$$\|e^{it\Delta} u_0\|_{Z^s}\leq C \|e^{it\Delta} u_0\|_{Y^s}\leq C\delta\leq C.$$
We define the mapping $\Phi$ as the Duhamel mapping:
\begin{align*}
\Phi(u):=e^{it\Delta}u_0\mp i\int_0^t e^{i(t-z)\Delta_{x,y}}(|u|^\alpha u)(z)\,dz.
\end{align*}
We aim to prove that $\Phi$ defines a contraction mapping on $\mathcal{K}$. Using Lemma \ref{nonlinear est 2} and Lemma \ref{lem 4.1} we obtain
\begin{align*}
\|\Phi(u)\|_{Y^s}&\leq \|e^{it\Delta}u_0\|_{Y^s}+\||u|^\alpha u\|_{(Z^{-s})'}\leq C+\|u\|_{Z^s}^{1+\alpha}\leq C+(2C)^{1+\alpha}\leq 2C,\\
\|\Phi(u)\|_{Z^s}&\leq \|e^{it\Delta}u_0\|_{Z^s}+\||u|^\alpha u\|_{(Z^{-s})'}\leq C\delta+\|u\|_{Z^s}^{1+\alpha}\leq C\delta+(2C\delta)^{1+\alpha}\leq 2C\delta
\end{align*}
by choosing $\delta\ll 1$. Next, using duality, Lemma \ref{lem5.5-} and the embedding $Y^0\hookrightarrow Z^0$ given by Lemma \ref{lem 4.1} we infer that
\begin{align*}
\|\Phi(u)-\Phi(v)\|_{Y^0}&\leq C\||u|^{\alpha}u-|v|^{\alpha}v\|_{(Z^{0})'}
\leq C\sup_{\|w\|_{Z^0}=1}\|w(|u|^{\alpha}u-|v|^{\alpha}v)\|_{L_{t,x,y}^1}\\
&\leq C\sup_{\|w\|_{Z^0}=1}\|w(u-v)(|u|^{\alpha}+|v|^{\alpha})\|_{L_{t,x,y}^1}\leq C\|u-v\|_{Z^0}(\|u\|_{Z^s}+\|v\|_{Z^s})^\alpha\\
&\leq C\delta^\alpha\|u-v\|_{Z^0}\leq C\delta^\alpha\|u-v\|_{Y^0}.
\end{align*}
The existence of a global solution then follows from Lemma \ref{lem7.1} and standard fixed point arguments. For the scattering result, we may simply consider the result for the case $t\to\infty$, the case $t\to -\infty$ follows in the same manner. Define
\begin{align*}
\phi^+:=u_0-i\int_0^\infty e^{-iz\Delta_{x,y}}(|u|^\alpha u)(z)\,dz.
\end{align*}
Using Lemma \ref{embedding lem}, Lemma \ref{dual lem}, Lemma \ref{lem 4.1}, Lemma \ref{nonlinear est 2} and the dominated convergence theorem we obtain
\begin{align*}
\|u(t)-e^{it\Delta}\phi^+\|_{H^s_{x,y}}&\lesssim\bg\| \int_t^\infty e^{i(t-z)\Delta}(|u|^\alpha u)(z)\,dt\bg\|_{Y^s([t,\infty))}
\lesssim \| |u|^\alpha u\|_{(Y^{-s})'([t,\infty))}\\
&\lesssim \| |u|^\alpha u\|_{(Z^{-s})'([t,\infty))}\lesssim \|u\|_{Z^{s}([t,\infty))}^{1+\alpha}\to 0
\end{align*}
as $t\to\infty$. Finally, that the solution is in the class $C(\R;H_{x,y}^s)$ follows by using similar arguments as those used for the scattering part, we omit the repeating details. This completes the proof.
\end{proof}

\subsubsection*{Acknowledgment}
This research was supported by the NSF grant of China (No. 12301301) and the NSF grant of Guangdong (No. 2024A1515010497). The author is grateful to Zehua Zhao for some stimulating discussions.

%\bibliographystyle{acm}
%\bibliography{NLS}

\end{document}